\documentclass[11pt,reqno,oneside]{amsart}
\usepackage{parskip}
\usepackage[]{babel}
\usepackage[utf8]{inputenc} %Para utilizar tildes y caracteres latinos
\usepackage{amsmath,amsthm,amsbsy}
\usepackage{amsfonts}
\usepackage{amsrefs}
\usepackage{mathrsfs}
\usepackage{bbm}
\usepackage{amssymb}
\usepackage[colorlinks=true, menucolor=blue, linkcolor=blue, citecolor=blue]{hyperref}
\usepackage{enumitem}
\usepackage{multicol}
\usepackage{float}
\usepackage{fancyhdr}
\usepackage{layout}
\usepackage{esint}
\usepackage{array}
\usepackage{makecell}
\allowdisplaybreaks

\usepackage[colorinlistoftodos]{todonotes}
\usepackage{orcidlink} %Orcid

\usepackage{mathtools}
\usepackage[foot]{amsaddr}

\theoremstyle{plain}
\newtheorem{theorem}{Theorem}[section]                                          
\newtheorem{proposition}[theorem]{Proposition}                          
\newtheorem{lemma}[theorem]{Lemma}
\newtheorem{corollary}[theorem]{Corollary}

\theoremstyle{definition}
\newtheorem{definition}[theorem]{Definition}
\theoremstyle{remark}
\newtheorem{remark}[theorem]{Remark}
\newtheorem{example}[theorem]{Example}

\makeatletter \@addtoreset{equation}{section} \makeatother

\newcommand{\caract}{\mathbbm{1}}
\newcommand{\calF}{\mathcal{F}}
\newcommand{\calA}{\mathcal{A}}

\newcommand{\calB}{\mathcal{B}}
\newcommand{\calL}{\mathcal{L}}
\newcommand{\calP}{\mathcal{P}}

\newcommand{\calM}{\mathcal{M}}

\newcommand{\defeq}{:=}

\newcommand{\sbt}{\,\begin{picture}(-1,1)(-1,-3)\circle*{2}\end{picture}\ }

\newcommand{\N}{\mathbb{N}}     % Natural numbers
\newcommand{\R}{\mathbb{R}}     % Real numbers
\newcommand{\Prob}{\mathbb{P}}  % Probability measure
\newcommand{\Exp}{\mathbb{E}}   % Expectation 
\newcommand{\goth}[1]{\mathfrak{#1}} % Gothic letters 
\newcommand{\inner}[2]{\left( #1 \, , \, #2 \right)} % Inner product
\newcommand{\norm}[1]{\left\|#1\right\|}              % Vector norm
\newcommand{\triplet}[3]{\left( #1, #2, #3 \right) }             % General triplet e.g. a probability space
\newcommand{\ProbSpace}{\triplet{\Omega}{\mathcal{F}}{\Prob}}    % Triplet of a Probability Space
\newcommand{\abs}[1]{\left| #1 \right|}                          % Absolute value  

\newcommand{\quadraVari}[1]{\left\langle  #1  \right\rangle } % quadratic variation symbol

\newcommand{\fle}{\rightarrow}

\newcommand{\operQuadraVari}[1]{\left\langle \!\left\langle #1  \right\rangle \!\right\rangle} % operator quadratic variation symbol

\newcommand\restr[2]{{% we make the whole thing an ordinary symbol
  \left.\kern-\nulldelimiterspace % automatically resize the bar with \right
  #1 % the function
  \vphantom{\big|} % pretend it's a little taller at normal size
  \right|_{#2} % this is the delimiter
  }}

\title{The martingale representation theorem for cylindrical martingale valued measures}

\author{S. Cambronero\,\orcidlink{0000-0001-6758-4942}$^1$}
\author{D. Campos\,\orcidlink{0000-0002-3608-1151}$^2$}
\author{C. A. Fonseca-Mora\,\orcidlink{0000-0002-9280-8212}$^3$}
\author{D. Mena\,\orcidlink{0000-0002-9443-391X}$^4$}
\address{Centro de Investigaci\'{o}n en Matem\'{a}tica Pura y Aplicada \\ Escuela de Matem\'{a}tica, Universidad de Costa Rica}
\email{$^1$ santiago.cambronero@ucr.ac.cr} 
\email{$^2$ josedavid.campos@ucr.ac.cr}
\email{$^3$ christianandres.fonseca@ucr.ac.cr}
\email{$^4$ dario.menaarias@ucr.ac.cr}

\begin{document}
\emergencystretch 3em

\subjclass[2020]{60H05, 60B11, 60G48} 
\keywords{cylindrical martingale-valued measures; martingale representation theorem; non-radonifying stochastic integral, cylindrical white noise measure}

\begin{abstract}
We prove a martingale representation theorem for cylindrical martingale-valued measures defined on a separable Banach space. The main tool for establishing the theorem, is a new theory of non-radonifying stochastic integration in reflexive Banach spaces. A second one is the study and characterization of cylindrical white noise measure processes. As consequences of our representation theorem, we prove analogous versions for Hilbert space-valued measures and for cylindrical square integrable martingales. Finally, we apply the results to characterize the solutions to the weak martingale problem for SDEs driven by cylindrical white noise measures.

\end{abstract}

\maketitle

%\tableofcontents
%We generalize the construction of \cite{MetivierPellaumail}, Section $16$, to the case of a cylindrical orthogonal martingale valued measure as integrator. \medskip

\section{Introduction}

The martingale representation theorem is one of the central results of modern stochastic analysis, providing a structural description of martingales in terms of stochastic integrals with respect to a fundamental driving noise. In the classical finite–dimensional setting, as well as in the infinite dimensional context, such representations play a crucial role in areas such as stochastic differential equations, filtering theory, and mathematical finance \cites{CohenElliott:2015, DaPratoZabczyk, GawaMand:2010, LeGallBrownian, MetivierPellaumail}.  In this article, our main objective is to prove a martingale representation theorem for a cylindrical orthogonal martingale-valued measure with (weakly) continuous paths, defined on a separable and reflexive Banach space. 

Cylindrical orthogonal martingale-valued measures where studied in \cite{CCFM:SPDE} with some addenda in \cite{CCFM:Ito}.  Roughly speaking, for a ring $\mathcal{A}$ of Borel subsets of a topological space $U$, a cylindrical orthogonal martingale-valued measure is a family $(M(t,A):t \geq 0, A \in \mathcal{A})$ such that  for each $A \in \mathcal{A}$, the process $(M(t,A):t \geq 0)$ is a cylindrical square integrable martingale on a Banach space $X$ and for each $t \geq 0$, $M(t, \cdot)$ is an $L^{2}$-valued finitely additive measure on $\mathcal{A}$. This concept generalizes
both the Hilbert-space valued martingale-valued measures and the cylindrical square integrable martingales (see Section 6 in \cite{CCFM:Ito}). In particular, these include some of the popular models of stochastic integrators in the literature, as for example the White noise (in the sense of martingale measures introduced by Walsh in \cite{Walsh:1986}), square integrable Hilbert space-valued L\'evy processes, stochastic integrals defined with respect to Poisson random measures, and cylindrical square integrable L\'evy processes (see \cite{CCFM:SPDE}). 

Within the context of Hilbert spaces, stochastic integrals with respect to cylindrical orthogonal martingale-valued measures were constructed in \cite{CCFM:SPDE}. The corresponding class of integrands consists of Hilbert-Schmidt operators which are locally square integrable with respect to the (predictable) quadratic variation $\operatorname{M}$ and the operator  quadratic variation $Q_{M}$ of $M$ (both $\operatorname{M}$ and $Q_{M}$ were constructed in  \cite{CCFM:SPDE}).  The constructed stochastic integral is a  locally square integrable martingale with values in a Hilbert space. 

In Section \ref{sectPreliminaries}, we review the main definitions and properties of cylindrical martingale-valued measure, as well as the theory of integration with respect to those objects, as developed in \cite{CCFM:SPDE}.   In Section \ref{SectionNonRadonifyingIntegral}, we extend the construction  in \cite{CCFM:SPDE} to define a stochastic integral which is itself a cylindrical orthogonal martingale-valued measure, within the context of reflexive Banach spaces. Since the idea is to define a cylindrical object, we refer to this integral as the non-radonifying stochastic integral.

The main steps in the construction of the integral are carried out in Section \ref{sectStochIntegAsACylMVM}. To define the corresponding class of integrands, we extend the arguments used in \cite{MetivierPellaumail}*{Section 16} to our more general context. Our main result in this section is Theorem \ref{theoExistenceStochIntegral}, which shows that the non-radonifying integral is a cylindrical martingale-valued measure. 
Some elementary properties of the integral are derived in Section \ref{subsecPropertiesIntegral}. The existence of the quadratic variation of the non-radonifying integral is addressed in Section \ref{subSectQuadraVariOfIntegral} where a explicit formula is derived (see \eqref{eqQuadraticVariationN}) as well as the corresponding operator quadratic variation (Theorem \ref{theoDensities}). An example is given in Section \ref{subsectWhiteNOiseBanach} where we introduce a white noise measure on a Banach space. Since the non-radonifying integral is itself a cylindrical martingale-valued measure, it is normal to consider stochastic integrals with respect to it; this leads naturally to the study of associativity properties of the stochastic integral. This problem is studied in Section \ref{subsectAssociaStochIntegral} and it turns out to be of great importance to establish the martingale representation theorem. As a final step in our study, within the context of separable Hilbert spaces we show in Section \ref{subSectCylinVsRadonifiedIntegral} that the stochastic integral constructed in \cite{CCFM:SPDE} is indeed a radonified version of the non-radonifying integral when considering Hilbert-Schmidt integrands. Aside from the theoretical interest on this result, it has a latter application to derive a Hilbert space-valued martingale representation theorem. 

Section \ref{sectionMartinRepreTheorem} contains the main results of this paper. We consider a cylindrical martingale-valued measure 
%$M=(M(t, A): t \geq 0, A \in \calA)$ 
with (weakly) continuous paths and show that, under some standard assumptions on its covariance structure, it can be represented as a non-radonifying stochastic integral with respect to some white noise measure. 

In Section \ref{subSectCyliWhiteNoise} we begin by introducing the concept of a cylindrical $Q$-white noise measure 
$W=(W(t, A): t \geq 0, A \in \calA)$ defined on a separable Hilbert space $H$. This object is defined through a random series representation of real-valued white noise measures (see \eqref{eqDefWN}), whose covariance operator $Q$ has a (weak) representation as a series with respect to some orthonormal basis. We explore basic properties of the  cylindrical $Q$-white noise measure as for example its (predictable) quadratic variation and its connection with the Hilbert-space valued white noise measure. In Section \ref{subSectLevyCharact} we prove a L\'evy characterization theorem for the cylindrical $Q$-white noise measure.  

In Section \ref{subsectMartingaleRepresentation} we prove our martingale representation (Theorem \ref{martingalerepresentationtheoremContinuousCase}) which can be formulated as follows: Let $H$ be a separable Hilbert space and $X$ a separable, reflexive Banach space. Assume that $M$ defined on $X^{*}$ has a covariance structure of the form:
$$ \quadraVari{M(t,A)x^*,M(t,A)y^*}_{t}=\int_0^t\!\! \int_A\; \quadraVari{\Phi (r,u) Q\Phi^*(r,u)x^{*}, y^{*}} \lambda(du) dr,  $$
where $x^{*}, y^{*} \in X^{*}$, $Q$ is a bounded operator on $H$ with some series representation (see \eqref{representationCovariaQDefiCylinWhineNoiseMeasure}), $\Phi: \Omega \times[0,T] \times U \rightarrow \mathcal{L}(H,X)$ satisfies some predictability assumption and $\Phi(\omega,t,u)Q\Phi^*(\omega,t,u)$ is a compact operator. Then, there exists a 
cylindrical $Q$-white noise measure $W=(W(t, A): t \geq 0, A \in \calA)$ on $H$ such that 
on some extended probability space we have
 $$
M(t,A)= \int_0^t\!\! \int_A \Phi(\omega,r,u)W(dr,du),
$$ 
where the equality should be understood in the sense of cylindrical stochastic processes. 

As an application of our main result, in Section \ref{sectSpecialMartingaleRepre} we prove a martingale representation theorem for Hilbert space-valued orthogonal martingale-valued measures (see Theorem \ref{theoremClassicalMartingaleRepresentation}) and a martingale representation theorem for cylindrical continuous square integrable martingales defined on a separable reflexive Banach space (see Theorem \ref{theoreprescylindricalmartingale}). To the extent of our knowledge, the first of these results is completely new in the literature. The second one, is comparable to the results obtained for example in \cite{Ondrejat:2005} and \cite{VeraarYaroslavtsev:2016}.  We end this section by deriving a representation theorem for classical square integrable martingales taking values on a reflexive, separable Banach space (Theorem \ref{theoRepClassMartBan}), this extends in some sense previous results in the context of Hilbert spaces, as for example the one in \cite{DaPratoZabczyk}.

Finally, in Section \ref{sectWeakMartingaleProblem} we study weak (distributional) solutions to the stochastic differential equation 
\begin{equation}\label{eqSDEIntro}
   dX_t = a(t,X_t)\,dt + \int_U b(t,u,X_t)\, W(dt,du),    
\end{equation}
where $W$ is a cylindrical $Q$-white noise and $a$ and $b$ are suitable coefficients.  
We define the associated martingale problem, and apply our martingale representation theorem to show (see Theorem \ref{theoWeakSolutionEquival}) that a weak solution to \eqref{eqSDEIntro} exists if and only if there is a solution to the corresponding martingale problem. We hope that this result can be applied to show existence and uniqueness of a weak solution to \eqref{eqSDEIntro}.  These results will be considered to appear elsewhere. 

\section{Preliminaries}\label{sectPreliminaries}

\subsection{Linear spaces and cylindrical processes}

We assume that $(\Omega, \mathcal{F}, \Prob)$ is a complete probability space equipped with a filtration $(\mathcal{F}_{t})_{t \in \R_{+}}$ that satisfies the \emph{usual conditions}, i.e. it is right continuous and $\mathcal{F}_{0}$ contains all $\Prob$-null sets. The predictable $\sigma$-algebra on $\Omega \times [0,\infty)$ is denoted by $\mathcal{P}$ and for any $T>0$ we denote by $\mathcal{P}_{T}$ the restriction of $\mathcal{P}$ to $\Omega \times [0,T]$ (For further details, see chapters 6 and 7 in \cite{CohenElliott:2015}).

Given a Banach space $X$ with strong dual $X^{*}$, the canonical dual pairing will be denoted by $\langle x, x^* \rangle_{X,X^*}$ for $x \in X$ and $x^{*} \in X^*$. A \emph{cylindrical random variable} on $X^{*}$ is a linear and continuous operator $Z:X^{*} \rightarrow L^{0} (\Omega, \mathcal{F}, \Prob)$, where $L^{0} (\Omega, \mathcal{F}, \Prob)$ is the space of (equivalence classes of) real-valued random variables, equipped with the topology of convergence in probability. A family of cylindrical random variables $Z=(Z_{t}: t \in [0,T])$ on $X^*$ is called a \emph{cylindrical stochastic process} on $X^*$. A cylindrical stochastic process  $M=(M_{t}: t \in [0,T])$ is called a \emph{cylindrical mean-zero square integrable martingale} on $X$ if for every $x^* \in X^*$ we have $M(x^*) \in \mathcal{M}^{2}$, the linear space of all real-valued, c\`{a}dl\`{a}g, mean-zero, square integrable martingales on the time interval $[0,T]$. This is a Banach space with the norm 
$$
\norm{m}_{\mathcal{M}^{2}}= \sup_{t \in [0,T]} \left( \Exp \left[  \abs{m(t)}^{2} \right] \right)^{1/2}.
$$
For a Hilbert-space valued square-integrable martingale $m=(m_{t}: t \geq 0)$, we denote by $\quadraVari{m}=(\quadraVari{m}_{t}: t \geq 0)$ its predictable quadratic-variation (i.e. the predictable compensator of $(\norm{m_{t}}^{2}: t \geq 0)$) and we denote by $[m]=([m]_{t}: t \geq 0)$ the (optional) quadratic variation of $m$. For details on the definition and properties of  $\quadraVari{m}$ and $[m]$ see e.g.  \cite{Metivier}. 

For any two Banach spaces $X$ and $Y$, the Banach space of bounded linear operators from $X$ into $Y$ will be denoted by  $\mathcal{L}(X,Y)$. We will denote the space of real-valued bounded bilinear forms on $X \times Y$ by $\goth{Bil}(X,Y)$. Trace class operators on a Hilbert space $H$ will be denoted by $\mathcal{L}_{1}(H)$.

\subsection{Cylindrical martingale-valued measures and quadratic variation}\label{sectCMVM}

Let $X$ be a Banach space with separable dual $X^{*}$. Throughout this work we assume that $M$ is a \emph{cylindrical orthogonal martingale-valued measure} on $X^*$ (see \cite{CCFM:SPDE}). To be precise, we fix a Hausdorff topological space $U$, which is Lusin in the sense that it is
homeomorphic to a Borel subset of the line. We consider a ring $\mathcal{A}$ of Borel subsets of $U$. A \emph{cylindrical martingale-valued measure} $M$ is defined as a collection  $(M(t,A): t \geq 0, A \in \mathcal{A})$ of cylindrical random variables on $X^*$ such that
\begin{enumerate} 
\item For each $A \in \mathcal{A}$, $M(0,A)(x^*)= 0$ $\Prob$-a.e. for all $x^* \in X^*$. \label{timezerocomvmdef}
\item For each $A \in \mathcal{A}$, $M(A) = (M(t,A): t \geq 0)$, is a cylindrical mean-zero square integrable martingale, and for each $t > 0$ and $A \in \mathcal{A}$, the map 
$$ 
M(t,A): X^* \rightarrow L^{0} \ProbSpace
$$ 
is continuous. \label{cylindricalMartingale}
\item If $t>0$ and $x^* \in X^*$, $M(t,\cdot)(x^*): \mathcal{A} \rightarrow L^{2} \ProbSpace$ is a $\sigma$-finite $L^{2}$-valued measure. As part of the hypothesis, $\mathcal{A}$ contains each $\mathcal{B}(U_n)$, where $(U_n)$ is the sequence of sets increasing to $U$, given by the $\sigma$-finiteness of $M(t,\cdot)h$. This sequence is independent of $t$. (see Definition 3.1 in \cite{CCFM:SPDE}). \label{fixedtimeismeasure}
\end{enumerate}

We further say that $M$ is \emph{orthogonal} if: 
\begin{enumerate}\setcounter{enumi}{3}
\item Given $t>0$ and $x^* \in H$, $\quadraVari{M(A)(x^*),M(B)(x^*)}_{t}=0$ whenever $A,B\in \mathcal{A}$ are disjoint. \label{orthogonality}
\end{enumerate}

Associated to $M$ there exists a collection random predictable $\sigma$-finite measures $(\nu_{x^*}: x^* \in X^*)$, called the \emph{intensity measures} of $M$, with 
the property that for every $t \geq 0$  and $A \in \mathcal{A}$, we have $ \nu_{x^{*}} (\omega) ([0, t] \times A) = \quadraVari{M(A)(x^{*})}_{t}(\omega)$ $\Prob$-a.e. 

We will assume that $M$ has a unique (predictable) quadratic variation (see Theorem 5.10 in \cite{CCFM:SPDE} on sufficient conditions for its existence), that is, a random measure $\operQuadraVari{M}: \Omega \rightarrow \mathcal{M}_{+}(\R_{+} \times U, \mathcal{B}(\R_{+}) \otimes \mathcal{B}(U) )$ that satisfies: 
\begin{enumerate}    
    \item Given $t \geq 0$ and $A \in \mathcal{A}$, for $\Prob$-a.e. $\omega \in \Omega$ we have $\operQuadraVari{M}(\omega)([0,t] \times A) < \infty$.
    \item \label{propertyUpperBoundNuX} $\operQuadraVari{M}$ is a minimal element  for the collection of all random measures $\zeta: \Omega \rightarrow \mathcal{M}_{+}(\R_{+} \times U, \mathcal{B}(\R_{+}) \otimes \mathcal{B}(U) )$ with the property:   $\forall\, x^{*} \in X^{*}$ with $\| x^{*} \| = 1$,  $\nu_{x^{*}} \leq \zeta$. 
\end{enumerate} 
We denote property \ref{propertyUpperBoundNuX} as $\displaystyle \operQuadraVari{M}=\sup_{\norm{x^{*}}=1} \nu_{x^{*}}$. 

We will further assume that the quadratic variation of $M$ satisfies: for $\Prob$-a.e. $\omega \in \Omega$,
\begin{equation}\label{eqBoundedrangequadraticvariation}
    \sup_{A \in \calA} \operQuadraVari{M}(\omega)([0,T]\times A) <\infty.
\end{equation}
Then by  Theorem 5.21 in \cite{CCFM:SPDE}, given $T>0$,  there exists a  process $Q_{M}: \Omega \times [0,T]  \times U \rightarrow \mathcal{L}(X^{*},X^{**})$ such that for all $x^{*}_{1},x^{*}_{2} \in X^*$, $0 \leq t\leq T$ and $A \in \mathcal{A}$, $\Prob$-a.e. $\omega \in \Omega$,
\begin{equation}\label{existenceofQ}
\quadraVari{M(A)(x^{*}_{1}),M(A)(x^{*}_{2})}_{t}(\omega) = \int_{[0,t]\times A} \langle Q_{M}(\omega,r,u)x^{*}_{1},x^{*}_{2} \rangle_{X^{**},X^{*}} \, \operQuadraVari{M} (\omega)(dr,du)  
\end{equation}
for all $x^{*}_{1}, x^{*}_{2} \in X^{*}$, $C \in \mathcal{B}([0,T]) \otimes \mathcal{B}(U)$. Moreover, the following properties hold:
\begin{enumerate}
    \item \label{propPredictQM} For every $x^{*}_{1}, x^{*}_{2} \in X^{*}$, the mapping $(\omega,r,u) \mapsto \langle Q_{M}(\omega,r,u)x^{*}_{1},x^{*}_{2}\rangle_{X^{**},X^{*}}$ is predictable, that is, $\mathcal{P}_T \otimes \mathcal{B}(U)$-measurable. \label{Qpredictable}
    \item \label{propPositSymmetQM} For $\Prob$-a.e. $\omega \in \Omega$, $Q_{M}(\omega,\cdot,\cdot)$ is positive and symmetric $\operQuadraVari{M}$-a.e. \label{Qpositiveandsymmetric}
    \item \label{propNormOneQM} For $\Prob$-a.e. $\omega \in \Omega$, $\norm{Q_{M}(\omega,\cdot,\cdot)}_{\mathcal{L}(X^{*},X^{**})}=1$,  $\operQuadraVari{M}$-a.e. \label{normoneoftheoperatorQ}
\end{enumerate}

The \emph{Dol\'eans measure} $\mu_M$ associated to $M$ on  $(\Omega\times [0,T]\times U,\mathcal{P}_{T}\otimes \mathcal{B}(U)) $ is defined as
$$
\mu_M(C) = \mathbb{E} \int_{[0,T]\times U} \caract_{C}\, d\operQuadraVari{M}, \quad C \in \mathcal{P}_{T}\otimes \mathcal{B}(U). 
$$
If $g: \Omega \times [0,T] \times U \rightarrow [0,\infty)$ is predictable, one can show that (see Lemma 6.2 in \cite{CCFM:SPDE}) 
\begin{equation}\label{eqEqualIntegralDoleans}
\int_{\Omega \times [0,T] \times U} g(\omega, s,u) \,  d \mu_{M}(\omega, s,u)
= \mathbb{E} \left[ \int_{[0,T]\times U} g(\cdot, s,u) \,  \operQuadraVari{M}(ds, du) \right].    
\end{equation}

\section{The non-radonifying stochastic integral}\label{SectionNonRadonifyingIntegral}

\subsection{Stochastic integral as a cylindrical martingale-valued measure}\label{sectStochIntegAsACylMVM}

From now on, we assume that $X$ is a reflexive and separable Banach space and that $Y$ any Banach space with $Y^*$ separable. Moreover, we assume  $M=(M(t,A): t \geq 0, A \in \mathcal{A})$ a cylindrical orthogonal martingale-valued measure on $X$ satisfying the assumptions listed in Section \ref{sectCMVM}. 
We denote, as usual
$$
M((s,t],A)(x^*) = M(t,A)(x^*) - M(s,A)(x^*),\quad x^*\in X^*.
$$

The main goal in this section is to introduce a sufficiently large class of operator-valued processes $\Phi=(\Phi(\omega,t,u))$ for which one can construct a new cylindrical orthogonal martingale-valued measure via stochastic integration of $\Phi$ with respect to $M$. We start by considering stochastic integration with respect to simple families. 

We consider a family $\Phi$ of operators $\Phi(\omega,t,u) \in \calL(X,Y)$, indexed by $(\omega,t,u)\in \Omega\times [0,T]\times U$. This family is called \emph{simple} if it can be written as
\begin{equation}
\label{eqDefSimpleFamily}
   \Phi = \sum_{i=1}^N \caract_{F_i \times (s_i,t_i]\times A_i} S_i 
\end{equation}

where $S_i\in \calL(X,Y)$, $F_i\in \calF_{s_i}$ and $A_i\in \calA$ for each $i$. For $i\neq j$ one of the following alternatives holds:
\begin{itemize}
    \item $(s_i,t_i]\cap (s_j,t_j] = \emptyset$
    \item $(s_i,t_i] = (s_j,t_j]$ and $A_i\cap A_j =\emptyset$.
\end{itemize}
This is the way we avoid having a double sum in the definition of a simple family.

%We assume $M$ has a quadratic variation and its family of intensity measures satisfies the sequential boundedness property (as defined on \cite{CCFM:SPDE}). In particular, the quadratic variation $\operQuadraVari{M}$  is unique (Theorem $5.10$ in \cite{CCFM:SPDE}). We also assume that, for $\Prob$-a.e. $\omega\in\Omega$
% \begin{equation}\label{eqBoundedrangequadraticvariation}
%     \sup_{A \in \calA} \operQuadraVari{M}(\omega)([0,T]\times A) <\infty.
% \end{equation}

We define the integral of $\Phi$ with respect to $M$ as the cylindrical process $I^{\Phi}$ on $Y^*$ given by
\begin{equation}\label{eqDefiIntegSimpleProce}
I^{\Phi}_t(A)y^* := \sum_{i=1}^N \caract_{F_i} M((t\land s_i,t\land t_i],A\cap A_i) (S_i^*y^*),\quad A\in \calA,\, y^*\in Y^*.    
\end{equation}
For $A\in \calA$, $y^*\in Y^*$, the norm of the real valued square integrable martingale $I^{\Phi}(A)y^*$ is
$$
\norm{I^{\Phi}(A)y^*}_{\calM^2}^2 = \Exp | I^{\Phi}_{T}(A)y^* |^2 = \sum_{i=1}^N \Exp \left[ \caract_{F_i} M^2 \left( (s_i,t_i]\times A\cap A_i \right) (S_i^*y^*) \right],
$$
where all cross terms vanish because of the martingale property and orthogonality of $M$. 
Since $F_i\in \calF_{s_i}$ and $t\mapsto M^2_{t}(A\cap A_i)(S_i^*y^*) - \quadraVari{M(A\cap A_i)(S_i^*y^*)}_{t}$ is a martingale, we have
$$
\norm{I^{\Phi}(A)y^*}_{\calM^2}^2 = \sum_{i=1}^N \Exp \left[ \caract_{F_i} \quadraVari{M(A\cap A_i)(S_i^*y^*)}^{t_i}_{s_i} \right]
$$
and using \eqref{existenceofQ} we have 
$$
\norm{I^{\Phi}(A)y^*}_{\calM^2}^2 = \Exp \int_{[0,T]\times A} \quadraVari{Q_M \Phi^*y^*,\Phi^* y^*}_{X,X^*} d\operQuadraVari{M},
$$
where the process $Q_{M}: \Omega \times [0,T]  \times U \rightarrow \mathcal{L}(X^*,X)$ is defined via \eqref{existenceofQ} and satisfies \ref{propPredictQM}, \ref{propPositSymmetQM} and \ref{propNormOneQM} (here and from now on we identify $X^{**}$ with $X$).

%Theorem $5.21$ in \cite{CCFM:SPDE}
% have the following properties:
% \begin{enumerate}
%     \item For every $x_1^*, x_2^* \in X^*$,  $(\omega,r,u) \mapsto \quadraVari{Q_{M}(\omega,r,u)x_1^*,x_2^*}_{X,X^*}$ is predictable.
%     \item For $\Prob$-a.e. $\omega \in \Omega$, $Q_{M}(\omega,\cdot,\cdot)$ is positive and symmetric $\operQuadraVari{M}$-a.e. \label{Qpositiveandsymmetric}
%     \item For $\Prob$-a.e. $\omega \in \Omega$, $\norm{Q_{M}(\omega,\cdot,\cdot)}_{\mathcal{L}(X^*,X)}=1$ $\operQuadraVari{M}$-a.e. \label{normoneoftheoperatorQ}
% \end{enumerate}

Notice that by \eqref{eqEqualIntegralDoleans} we also have 
\begin{eqnarray}
\nonumber
    \norm{I^{\Phi}(A)y^*}_{\calM^2}^2 & = & 
    \Exp \int_{[0,T]\times A} \quadraVari{\Phi Q_M \Phi^*y^*, y^*}_{Y,Y^*} d\operQuadraVari{M}\\
    & = & 
\label{eqNormOfIntegralSimple}
\int_{\Omega\times [0,T]\times A} \quadraVari{\Phi Q_M \Phi^*y^*,y^*}_{Y,Y^*} d\mu_M
\end{eqnarray}
where $\mu_M$ is Dol\'eans measure associated to $M$.

\begin{remark}
In case $X=H$ and $Y=G$ are Hilbert spaces we get, for $g\in G$
$$
\norm{I^{\Phi}(A)g}_{\calM^2}^2 = \Exp \int_{[0,T]\times A} \norm{Q_M^{1/2} \Phi^*g}_{H}^2 d\operQuadraVari{M} =
\int_{\Omega\times [0,T]\times A} \norm{Q_M^{1/2} \Phi^*g}_{H}^2 d\mu_M.
$$    
\end{remark}

% \section{Construction of the integral}

As the second step in our construction, we introduce the main class of integrands. 

\begin{definition}
We consider the space $L = L(M,T,X,Y)$ whose elements are families of operators $\Phi(\omega,t,u)$ that satisfy:
\begin{enumerate}
    \item For each $(\omega,t,u)$, $\Phi(\omega,t,u) \in \mathcal{L}(X,Y)$.
    \item The process $\quadraVari{Q_M \Phi^* y_1^*,\Phi^* y_2^*}_{X,X^*}$ is predictable for each $(y_1^*,y_2^*) \in Y^*\times Y^*$.
    % The bilinear form $(y_1^*,y_2^*) \mapsto \quadraVari{Q_M \Phi^* y_1^*,\Phi^* y_2^*}_{X,X^*}$ is continuous on $Y^*\times Y^*$ and the process $\quadraVari{Q_M \Phi^* y_1^*,\Phi^* y_2^*}_{X,X^*}$ is predictable for each $(y_1^*,y_2^*) \in Y^*\times Y^*$.
    \item \label{defiNormMPhiA} For each $A\in \mathcal{A}$ the expression
    \begin{equation}
    \label{eqNormMPhi}
    \rho_{M,A}(\Phi) := \sup_{\norm{y^*}_{Y^*}\leq 1} \left[ \int_{\Omega\times [0,T]\times A} \quadraVari{Q_M \Phi^* y^*,\Phi^* y^*}_{X,X^*} d\mu_M \right]^{1/2}
    \end{equation}
    represents a (finite) real number, that is $\rho_{M,A}(\Phi)<\infty$.
\end{enumerate}
\end{definition}

\begin{remark}
Consider 
$$
\rho_M(\Phi) := \sup_{A\in\mathcal{A}} \rho_{M,A}(\Phi) = \sup_{\norm{y^*}_{Y^*}\leq 1} \left[ \int_{\Omega\times [0,T]\times U} \quadraVari{Q_M \Phi^* y^*,\Phi^* y^*}_{X,X^*} d\mu_M \right]^{1/2}.
$$
The hypothesis $\rho_M(\Phi)<\infty$ is more restrictive than (iii).
\end{remark}

We denote by $\mathcal{S} = \mathcal{S}(M,X,Y)$ the subset of $L$ whose elements are simple families. It is not difficult to check that each $\rho_{M,A}$ is a  seminorm on $L$. 
Hypothesis \ref{defiNormMPhiA} allows us to define a pseudo-metric
$$
d_M(\Phi,\Psi) = \sum_{n=1}^\infty 2^{-n}\frac{\rho_{M,U_n}(\Phi-\Psi)}{1+\rho_{M,U_n}(\Phi-\Psi)}.
$$
In what follows, we implicitly identify two families $\Phi$ and $\Psi$ whenever $d_M(\Phi-\Psi)=0$. Since $X$ is reflexive, this is equivalent to
$$
\forall y^*\in Y^* \quad (\Phi-\Psi)Q_M(\Phi-\Psi)^*y^* = 0 \quad \mu_M\text{- a.e.}
$$
We denote $\Lambda = \Lambda(M,X,Y)$ the closure of $\mathcal{S}$ under $d_M$. Notice that $\Phi\in \Lambda$ if and only if there is a sequence $(\Phi)_n$ in $\mathcal{S}$ such that $\rho_{M,A}(\Phi_n-\Phi)\fle 0$ for each $A\in\calA$.

From \eqref{eqNormOfIntegralSimple}, for each $A\in \calA$, $y^*\in Y^*$ and $\Phi \in \Lambda(M,T,X,Y)$ we can define
$$
I^{\Phi}(A)y^* := \calM^2\text{-}\lim_{n\fle\infty} I^{\Phi_n}(A)y^*,
$$
where $(\Phi_n)$ is any sequence in $\mathcal{S}$ such that $d_M(\Phi_n-\Phi)\fle 0$. 
For each $A\in \calA$, $I^{\sbt}(A)$ maps $(\Lambda,\rho_{M,A})$ isometrically into a subspace of $\calL(Y^*,\calM_T^2)$. In fact, from \eqref{eqNormOfIntegralSimple} we have
\begin{equation}
\label{eqIsometria}
    \norm{I^{\Phi}(A)}_{\calL(Y^*,\calM_T^2)} = \sup_{\norm{y^*}_{Y^*}\leq 1} \norm{I^{\Phi}(A)y^*}_{\calM^2} = \rho_{M,A}(\Phi).
\end{equation}

%\section{The integral as a cylindrical orthogonal martingale-valued measure}

Clearly we have 
$I_0^{\Phi}(A)y^* = 0$ for $A\in \calA$, $y^*\in Y^*$ and $\Phi \in \mathcal{S}$, therefore for any $\Phi \in \Lambda$. It is also clear that, for $\Phi\in \mathcal{S}$ and $A\in \calA$, $I^{\Phi}(A)$ is a cylindrical mean-zero square integrable martingale. Besides
$$
I^{\Phi}_t(A):Y^*\fle L^0(\Omega,\calF,\Prob)
$$
is continuous. This property translates via limits to the integral of any $\Phi \in \Lambda$.\medskip

Denote by $\mathbb{M}^2(T,X)$ the space of cylindrical orthogonal martingale-valued measures defined on $\calA$,  taking values on $X$ and satisfying the assumptions listed in Section \ref{sectCMVM}. 

\begin{definition}
We denote $I^{\Phi}=(I^{\Phi}_{t}(A): t \geq 0, A \in \mathcal{A})$ by $\int \Phi \, dM$ and refer to it as the \emph{stochastic integral} of $\Phi$ with respect to $M$. 
\end{definition}

\begin{theorem}\label{theoExistenceStochIntegral}
For $M\in \mathbb{M}^2(T,X)$ and $\Phi\in \Lambda(M,X,Y)$, $\int \Phi dM$ belongs to $\mathbb{M}^2(T,Y)$.    
\end{theorem}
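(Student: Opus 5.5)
We verify the four defining properties of a cylindrical orthogonal martingale-valued measure for $N:=I^{\Phi}$, together with the standing assumptions of Section~\ref{sectCMVM} (existence of a quadratic variation satisfying \eqref{eqBoundedrangequadraticvariation}). Properties (1) and (2), namely $N(0,A)=0$, the martingale property, and continuity of $N_t(A):Y^*\fle L^0$, have already been observed above. For simple $\Phi$ they follow directly from \eqref{eqDefiIntegSimpleProce}. For general $\Phi\in\Lambda$ they pass to the limit, because $\calM^2$-convergence preserves martingales and because the isometry \eqref{eqIsometria} gives $\norm{I^{\Phi}_t(A)y^*}_{L^2}\le \rho_{M,A}(\Phi)\norm{y^*}$.

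\textbf{Orthogonality.} For simple $\Phi$ and disjoint $A,B\in\calA$, the covariation $\quadraVari{I^{\Phi}(A)y^*,I^{\Phi}(B)y^*}$ is a sum of terms $\caract_{F_i}\caract_{F_j}\quadraVari{M(A\cap A_i)S_i^*y^*,M(B\cap A_j)S_j^*y^*}$ over overlapping time intervals. By polarization and \eqref{existenceofQ}, each term is an integral over $[0,t]\times (A\cap A_i\cap B\cap A_j)=\emptyset$, so it vanishes. For general $\Phi$, we note that $\quadraVari{\cdot,\cdot}_t$ is continuous in $L^1$ with respect to $\calM^2$-convergence of both arguments (Kunita--Watanabe), so the identity survives the limit. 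The same argument yields the key formula
$$
\quadraVari{I^{\Phi}(A)y_1^*,I^{\Phi}(A)y_2^*}_t=\int_{[0,t]\times A}\quadraVari{\Phi Q_M\Phi^*y_1^*,y_2^*}_{Y,Y^*}\,d\operQuadraVari{M},
$$
proved first for simple $\Phi$ and then by approximation, using the $L^1(\mu_M)$ convergence of the integrands implied by $\rho_{M,A}(\Phi_n-\Phi)\fle 0$ and Cauchy--Schwarz for the positive form $Q_M$.

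\textbf{$\sigma$-additivity.} Fix $t$ and $y^*$. Finite additivity of $A\mapsto N_t(A)y^*$ holds for simple $\Phi$ and passes to the limit. For countable additivity on each $U_n$, take disjoint $A_k\downarrow$ with union $A\in\calA$, $A\subseteq U_n$. By orthogonality and the formula above,
$$
\Exp\Big|N_t(A)y^*-\sum_{k\le m}N_t(A_k)y^*\Big|^2=\int_{\Omega\times[0,t]\times (A\setminus\cup_{k\le m}A_k)}\quadraVari{Q_M\Phi^*y^*,\Phi^*y^*}\,d\mu_M\fle 0
$$
by dominated convergence, since the integrand is $\mu_M$-integrable on $A$. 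The same sequence $(U_n)$ serves for $N$.

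\textbf{Quadratic variation (main obstacle).} The intensity measures are $\nu^N_{y^*}(dr,du)=\quadraVari{\Phi Q_M\Phi^*y^*,y^*}\,d\operQuadraVari{M}$. The natural candidate is therefore
$$
\operQuadraVari{N}:=\sup_{\norm{y^*}=1}\quadraVari{\Phi Q_M\Phi^*y^*,y^*}\,\operQuadraVari{M},
$$
where the supremum is taken over a countable dense subset of the unit ball of $Y^*$; this is possible because $Y^*$ is separable, and it makes the density predictable. For every $y^*$ this density equals $\norm{Q_M^{1/2}\Phi^*}^2$ in the appropriate sense, and this gives minimality. The difficulty is \eqref{eqBoundedrangequadraticvariation} and finiteness of $\operQuadraVari{N}([0,t]\times A)$: a pointwise supremum of densities may fail to be $\operQuadraVari{M}$-integrable even though each $\nu^N_{y^*}$ is finite. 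To handle this I would try to invoke the existence criterion of \cite{CCFM:SPDE}, Theorem~5.10, by verifying the sequential boundedness of $(\nu^N_{y^*})$ through the bound $\Exp\,\nu^N_{y^*}([0,T]\times A)\le\rho_{M,A}(\Phi)^2\norm{y^*}^2$. If that route does not apply, I would restrict to (or argue within) the class where $\rho_M(\Phi)<\infty$, so that the uniform bound transfers.
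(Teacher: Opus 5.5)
You follow the paper's route up to the last step. The axioms are checked on simple families and carried to the $d_M$-limit through \eqref{eqIsometria}. Your countable-additivity argument (finite additivity, then the isometry on $A\setminus\bigcup_{k\le m}A_k$ plus dominated convergence) is a slightly more direct form of the paper's three-term estimate. Your candidate density $\norm{\Phi Q_M\Phi^*}_{\mathcal{L}(Y^*,Y)}$ is exactly \eqref{eqQuadraticVariationN}.

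\textbf{The gap.} The quadratic-variation step is a genuine gap, and neither route you propose can close it. The bound $\Exp\,\nu^N_{y^*}([0,T]\times A)\le\rho_{M,A}(\Phi)^2\norm{y^*}^2$ controls each intensity measure separately, not their supremum. The condition $\rho_M(\Phi)<\infty$ puts the supremum over $y^*$ outside the integral, so it gives no integrability of $\norm{\Phi Q_M\Phi^*}$.

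In fact the full claim, with the quadratic-variation requirements of Section~\ref{sectCMVM}, fails for general $\Phi\in\Lambda$. Take the following data:
\begin{itemize}
\item $M$ a standard cylindrical Wiener process on a separable Hilbert space $H$, with $U$ a single point, $Q_M=I$ and $\operQuadraVari{M}=\mathrm{Leb}$;
\item disjoint intervals $I_k\subset(0,T]$ of length $\ell_k$, and $P_k$ the projection onto $\mathrm{span}\{e_k\}$;
\item $\Phi=\sum_k a_k\caract_{I_k}P_k$ with $a_k^2\ell_k=1/k$.
\end{itemize}
The truncations $\Phi_n$ are simple and $\rho_M(\Phi-\Phi_n)^2=1/(n+1)$, so $\Phi\in\Lambda$ with $\rho_M(\Phi)=1$. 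Yet $\nu^N_{e_k}=a_k^2\caract_{I_k}\,dt$, so every measure dominating all $\nu^N_{y^*}$ with $\norm{y^*}=1$ has mass at least $\sum_k 1/k=\infty$ on $[0,T]$. Hence $N$ has no finite quadratic variation.

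The paper's proof does not attempt this step. It verifies only the $\sigma$-finite measure property and orthogonality; the other axioms are noted before the theorem. It obtains $\operQuadraVari{N}$ only later, in Theorem~\ref{theoDensities}, under the extra hypothesis \eqref{eqAdditionalHip}, via Fatou's lemma and Theorem~5.8 of \cite{CCFM:SPDE}. Even then, \eqref{eqBoundedrangequadraticvariation} for $N$ would need a global integrability bound. So what you and the paper actually prove is that $\int\Phi\,dM$ is a cylindrical orthogonal martingale-valued measure. For the quadratic variation you must add a hypothesis such as \eqref{eqAdditionalHip} rather than try to derive it.

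\textbf{A smaller point in your orthogonality step.} Polarizing axiom (4), which is stated for a single vector $x^*$, only gives $\quadraVari{M(A')x_1^*,M(B')x_2^*}_t+\quadraVari{M(B')x_1^*,M(A')x_2^*}_t=0$ for disjoint $A',B'$. Also, \eqref{existenceofQ} only concerns brackets of $M(A)$ with itself. The cross terms $i\neq j$ of a simple family, with $S_i\neq S_j$ on disjoint $A_i,A_j$, need each bracket to vanish separately, i.e.\ orthogonality in the bilinear sense. The paper uses this tacitly too (``obtained by orthogonality''), but it does not follow from (4). Here is a counterexample:
\begin{itemize}
\item Take $U=\{a,b\}$, $X=\R^2$, and independent Brownian motions $\beta^1,\beta^2$.
\item Define $M(t,\{a\})x=x_1\beta^1_t+x_2\beta^2_t$ and $M(t,\{b\})x=x_1\beta^2_t-x_2\beta^1_t$. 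This $M$ satisfies (4) and \eqref{existenceofQ} with $Q_M=I$.
\item Take $Y=\R$ and $\Phi=\caract_{\Omega\times(0,T]\times\{a\}}S_1+\caract_{\Omega\times(0,T]\times\{b\}}S_2$ with $S_ix=x_i$. Then $\quadraVari{I^{\Phi}(\{a\})1,I^{\Phi}(\{b\})1}_t=-t$.
\end{itemize}
You should state bilinear orthogonality of $M$ explicitly as the hypothesis used there.
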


\begin{proof}
Given $t>0$, $y^*\in Y^*$ and $\Phi\in\Lambda$, $I^{\Phi}_t(\sbt)y^*:\calA \fle L^2(\Omega,\calF,\Prob)$ is a $\sigma$-finite $L^2$-valued measure. In fact, Definition 3.1. in \cite{CCFM:SPDE} can be directly verified when $\Phi$ is a simple family and then, through the limit it is extended to the general case $\Phi\in \Lambda$. For instance, to verify that $I^{\Phi}_t(A)y^*$ is the $L^2$-limit of $I^{\Phi}_t(A\cap U_m)y^*$ we first write $\Phi$ as the $d_M$-limit of a sequence of simple families $\Phi_n$. The result follows from the inequalities
\begin{eqnarray*}
\norm{I^{\Phi}_t(A)y^* - I^{\Phi}_t(A\cap U_m)y^*}_2 & \leq & \norm{I^{\Phi}_t(A)y^* - I^{\Phi_n}_t(A)y^*}_2 + \norm{I^{\Phi_n}_t(A)y^* - I^{\Phi_n}_t(A\cap U_m)y^*}_2 \\
&  + & \norm{I^{\Phi_n}_t(A\cap U_m)y^* - I^{\Phi}_t(A\cap U_m)y^*} \\
& \leq & 2 \rho_{M,A}(\Phi - \Phi_n) + \norm{I^{\Phi_n}_t(A)y^* - I^{\Phi_n}_t(A\cap U_m)y^*}_2 .
\end{eqnarray*}

Finally, we must verify that $I^{\Phi}$ is orthogonal. Again, it is enough to prove it for $\Phi$ simple. In fact, for $\Phi$ given by \eqref{eqDefSimpleFamily}, $y^*\in Y^*$, $t>s$, $F\in \calF_s$ and $A\cap B = \emptyset$, it is straightforward to verify that 
$$
\Exp \left[ \caract_F I_t^{\Phi}(A)y^* \cdot I_t^{\Phi}(B)y^*   \right] = \Exp \left[ \caract_F I_s^{\Phi}(A)y^* \cdot I_s^{\Phi}(B)y^*   \right].
$$
This is obtained by orthogonality and martingale property of $M$. This shows that $I_t^{\Phi}(A)y^* \cdot I_t^{\Phi}(B)y^*$ is a martingale, so in particular $\quadraVari{I_t^{\Phi}(A)y^*,I_t^{\Phi}(B)y^*}_t = 0$. \end{proof}

Consider the space 
$$
L^{2,loc} = L^{2,loc}(M,X,Y) := \bigcap_{n\in\N} L^{2}_{\calL(X,Y)}\left( \Omega\times [0,T]\times U_n, \calP \otimes \calB(U_n),\mu_{M,n}  \right)
$$
as a subspace of $L^{0}_{\calL(X,Y)}\left( \Omega\times [0,T]\times U, \calP \otimes \calB(U),\mu_{M} \right)$,
where $\mu_{M,n}$ is the restriction of $\mu_M$ to $\calB(U_n)$. This is a Fr\'echet space with the metric
$$
d_2(\Phi,\Psi) = \sum_{n=1}^\infty 2^{-n}\frac{\norm{\Phi-\Psi}_{2,n}}{1+\norm{\Phi-\Psi}_{2,n}}.
$$
In the above we have 
$$ \norm{\Phi}^{2}_{2,n} = \int_{\Omega\times [0,T]\times U_n} \norm{\Phi}_{\calL(X,Y)}^2 d\mu_{M,n}. $$

The following result shows that any $\mu_M$-locally square integrable family belongs to $\Lambda(M,X,Y)$.

\begin{theorem}
\label{theoCondL2LocInteg}
The space $L^{2,loc} = L^{2,loc}(M,X,Y)$ is contained in $\Lambda(M,X,Y)$ and, for $\Phi\in L^{2,loc}$ we have
$$
d_{M}(\Phi) \leq d_{2}(\Phi).
$$
\end{theorem}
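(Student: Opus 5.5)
The argument has two parts: a pointwise comparison of seminorms, giving $\rho_{M,A}(\Phi)\le \norm{\Phi}_{2,n}$ whenever $A\subseteq U_n$, and a density argument showing that simple families are $d_2$-dense in $L^{2,loc}$. Together with the comparison, density puts $L^{2,loc}$ inside the $d_M$-closure $\Lambda$ of $\mathcal{S}$.

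\emph{Comparison.} For $\Phi\in L^{2,loc}$ and $\norm{y^*}_{Y^*}\le 1$, properties \ref{propPositSymmetQM} and \ref{propNormOneQM} of $Q_M$ give, $\mu_M$-a.e.,
$$
\quadraVari{Q_M\Phi^*y^*,\Phi^*y^*}_{X,X^*}\le \norm{Q_M}_{\calL(X^*,X)}\norm{\Phi^*y^*}_{X^*}^2\le \norm{\Phi}_{\calL(X,Y)}^2 .
$$
Integrating over $\Omega\times[0,T]\times U_n$ against $\mu_M$ and taking the supremum over $y^*$ yields $\rho_{M,U_n}(\Phi)\le \norm{\Phi}_{2,n}$. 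The function $x\mapsto x/(1+x)$ is increasing, so summing against the weights $2^{-n}$ gives $d_M(\Phi)\le d_2(\Phi)$. Applied to differences, this also shows that $d_2$-convergence implies $d_M$-convergence.

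Predictability of $\quadraVari{Q_M\Phi^*y_1^*,\Phi^*y_2^*}$ has to be checked. I would first verify it for simple $\Phi$, where it follows from \ref{propPredictQM}. For general $\Phi$ it follows by taking $\mu_M$-a.e. limits along a $d_2$-approximating sequence: strong measurability gives $\Phi_k^*y^*\to\Phi^*y^*$ pointwise a.e., and the bilinear form is continuous. Since everything is only defined modulo $\mu_M$-null sets, this is enough.

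\emph{Density.} I take elements of $L^{2,loc}$ to be strongly $\calP\otimes\calB(U)$-measurable, that is, Bochner measurable. Fix $n$. Standard Bochner theory shows that functions $\sum_j \caract_{C_j}S_j$, with $C_j\in\calP_T\otimes\calB(U_n)$ and $S_j\in\calL(X,Y)$, are dense in $L^2_{\calL(X,Y)}(\mu_{M,n})$. By the first condition on $\operQuadraVari{M}$ (together with \eqref{eqBoundedrangequadraticvariation}), $\mu_M$ is finite on $\Omega\times[0,T]\times U_n$, so a monotone class argument applies. The algebra generated by the rectangles $F\times(s,t]\times A$, with $F\in\calF_s$ and $A\in\calA\cap\calB(U_n)$, generates $\calP_T\otimes\calB(U_n)$, up to $\{0\}\times F_0$ sets, which are $\mu_M$-null because $\operQuadraVari{M}(\{0\}\times\cdot)=0$. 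Hence each $C_j$ can be approximated in $\mu_M$-measure by finite disjoint unions of such rectangles. After refining the time intervals and the sets $A$, these unions can be written in the normal form \eqref{eqDefSimpleFamily}.

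This produces simple $\Phi_k$ with $\norm{\Phi_k-\Phi}_{2,n}\to0$ for each fixed $n$. A diagonal choice, taking $\Phi_k$ within $1/k$ of $\Phi$ in $\norm{\cdot}_{2,m}$ for all $m\le k$, gives $d_2(\Phi_k,\Phi)\to0$. This step uses that the restrictions to $U_m$ are nested, so approximating on $U_k$ controls every $U_m$ with $m\le k$.

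Each simple $\Phi_k$ lies in $\mathcal{S}\subset L$. The comparison gives $d_M(\Phi_k-\Phi)\le d_2(\Phi_k-\Phi)\to0$, so $\Phi\in\Lambda$.

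The main obstacle is the density step. It requires the measurability convention for $L^{2,loc}$ to be Bochner, since $\calL(X,Y)$ is nonseparable. It also requires the bookkeeping that turns unions of predictable rectangles into the disjoint simple form, including handling general sets of $\calB(U_n)$ through the ring $\calA\supseteq\calB(U_n)$.
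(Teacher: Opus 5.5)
Your argument is the paper's. The pointwise bound $\quadraVari{Q_M\Phi^*y^*,\Phi^*y^*}_{X,X^*}\le\norm{\Phi}^2_{\calL(X,Y)}$ for $\norm{y^*}_{Y^*}\le 1$ (from $\norm{Q_M}_{\calL(X^*,X)}=1$) gives $\rho_{M,U_n}(\Phi)\le\norm{\Phi}_{2,n}$ and hence $d_M\le d_2$, and $d_2$-density of simple families then puts $L^{2,loc}$ inside $\Lambda$. The paper simply asserts that density, so your sketch of it and your predictability check add detail without changing the route.

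One correction concerns the step $\mu_M(\Omega\times[0,T]\times U_n)=\Exp\,\operQuadraVari{M}([0,T]\times U_n)<\infty$. It does not follow from the a.s. conditions you cite. Pathwise finiteness, even in the uniform form \eqref{eqBoundedrangequadraticvariation}, gives no control of the expectation. The fact has to be taken from the integrability built into the quadratic variation in \cite{CCFM:SPDE}. The paper also uses it tacitly, for example when it writes $\mu_M(\Omega\times[0,T]\times A)<\infty$ in the remark on integrating the identity. It is genuinely needed, both for your approximation of predictable sets by rectangles and for simple families to belong to $L^{2,loc}$ at all.
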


\begin{proof}
In fact, for $\Phi\in L^{2}_{\calL(X,Y)}\left( \Omega\times [0,T]\times U_n, \calP \otimes \calB(U_n),\mu_{M,n}  \right)$ we have
\begin{eqnarray*}
\rho_{M,U_n}(\Phi)^2 & \leq &  \int_{\Omega\times [0,T]\times U_n} \sup_{\norm{y^*}_{Y^*}\leq 1} \quadraVari{Q_M \Phi^* y^*,\Phi^* y^*}_{X,X^*} d\mu_{M,n} \\
 & \leq & \int_{\Omega\times [0,T]\times U_n} \norm{Q_M}_{\calL(X^*,X)} \norm{\Phi^*}_{\calL(Y^*,X^*)}^2 d\mu_{M,n} \\
  & = & \int_{\Omega\times [0,T]\times U_n} \norm{\Phi}_{\calL(X,Y)}^2 d\mu_{M,n} = \norm{\Phi}_{2,n}.
\end{eqnarray*}
This shows that, for $\Phi\in L^{2,loc}$,   $d_M(\Phi) \leq d_2(\Phi)$. Since $L^{2,loc}$ is contained in the closure of $\mathcal{S}$ under $d_2$, we are done.
\end{proof}

\subsection{Elementary properties of the non-radonifying integral}\label{subsecPropertiesIntegral}

We begin with the linearity property of the stochastic integral on the integrands. 

\begin{proposition}\label{propLinearityIntegral}
 Let $\Phi, \Psi\in\Lambda(M,X,Y)$ and $\lambda \in \R$, we have
$$
\int (\lambda \Phi + \Psi)\; dM = \lambda \int \Phi\;dM + \int \Psi\; dM. 
$$
More precisely, for any given   $A\in\mathcal{A}$, the identity
$$
\int_0^t\int_A (\lambda \Phi(s,u) + \Psi(s,u))\; M(ds,du) = \lambda \int_0^t\int_A \Phi(s,u)\;M(ds,du) + \int_0^t\int_A \Psi(s,u)\; M(ds,du)
$$
holds $\Prob$-a.e. simultaneously for all $t\in [0,T]$.   
\end{proposition}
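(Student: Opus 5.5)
The plan is to establish the identity first for simple families and then pass to the limit in $\calM^2$ using the isometry \eqref{eqIsometria}, with the ``simultaneously for all $t$'' statement coming from the fact that $\calM^2$-equality of two càdlàg martingales means indistinguishability.

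\emph{Step 1: simple families.} Let $\Phi,\Psi\in\mathcal{S}$. The family $\lambda\Phi+\Psi$ is again simple, but not directly in the form \eqref{eqDefSimpleFamily}. I will take a common refinement. Let $r_0<r_1<\dots<r_K$ be all the endpoints $s_i,t_i$ of both families. Split each time interval of $\Phi$ and of $\Psi$ into the subintervals $(r_{k-1},r_k]$. For each $k$, take the finitely many sets $A_i$ occurring on $(r_{k-1},r_k]$ in either family and refine them into disjoint atoms of the ring $\calA$ they generate. Also intersect the events $F_i$ into atoms, which lie in $\calF_{r_{k-1}}$ because $F_i\in\calF_{s_i}\subseteq\calF_{r_{k-1}}$. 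This yields representations of $\Phi$, $\Psi$ and $\lambda\Phi+\Psi$ over the same partition, with operators $S$, $R$ and $\lambda S+R$ on each piece.

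Three facts then give the identity pathwise on simple families. First, $M(t,\cdot)(x^*)$ is finitely additive on $\calA$ and $M(\cdot,A)$ is additive in time increments, so $I^{\Phi}$ is unchanged by such a refinement. Second, $M(t,A)$ is linear in $x^*$, so $M(\cdot)((\lambda S+R)^*y^*)=\lambda M(\cdot)(S^*y^*)+M(\cdot)(R^*y^*)$. Third, $\caract_F$ distributes over sums. Since these identities hold a.s. and the processes are càdlàg, choosing càdlàg versions makes them hold a.s. for all $t$.

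\emph{Step 2: limit.} For general $\Phi,\Psi\in\Lambda$, pick $\Phi_n,\Psi_n\in\mathcal{S}$ with $\rho_{M,A}(\Phi_n-\Phi)\to0$ and $\rho_{M,A}(\Psi_n-\Psi)\to0$ for all $A\in\calA$. Since $\rho_{M,A}$ is a seminorm,
$$\rho_{M,A}\bigl(\lambda\Phi_n+\Psi_n-(\lambda\Phi+\Psi)\bigr)\le|\lambda|\rho_{M,A}(\Phi_n-\Phi)+\rho_{M,A}(\Psi_n-\Psi)\to0.$$
Hence $\lambda\Phi+\Psi\in\Lambda$, and by definition $I^{\lambda\Phi+\Psi}(A)y^*$ is the $\calM^2$-limit of $I^{\lambda\Phi_n+\Psi_n}(A)y^*=\lambda I^{\Phi_n}(A)y^*+\Psi$-term, that is, of $\lambda I^{\Phi_n}(A)y^*+I^{\Psi_n}(A)y^*$. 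This sequence converges in $\calM^2$ to $\lambda I^{\Phi}(A)y^*+I^{\Psi}(A)y^*$.

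\emph{Step 3: uniformity in $t$.} Equality of two elements in $\calM^2$ gives, by Doob's $L^2$ inequality,
$$\Exp\sup_{t\le T}|m_t-m'_t|^2\le 4\,\Exp|m_T-m'_T|^2=0.$$
So the càdlàg martingales are indistinguishable, which gives the identity a.s. simultaneously for all $t\in[0,T]$, for each fixed $A$ and $y^*$.

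\emph{Main obstacle.} The only delicate point is the bookkeeping in Step 1. One must check that the common refinement respects the convention that intervals are either disjoint or equal with disjoint sets, that the events stay $\calF_{s}$-measurable, and that the integral \eqref{eqDefiIntegSimpleProce} is independent of the representation. I would prove this representation-independence explicitly, using finite additivity of $M$ in $A$, additivity in time increments, and linearity in $x^*$. Everything else is routine.
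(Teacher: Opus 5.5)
Your proposal is the paper's argument (linearity on simple families, then extension to $\Lambda$ through the isometry \eqref{eqIsometria}) with the details filled in, and your Doob step correctly turns $\calM^2$-equality into indistinguishability, which gives exactly the ``simultaneously for all $t$'' claim. One caveat on the bookkeeping you flag: once you split the events into atoms, several pieces generally share the same $(r_{k-1},r_k]\times B$, so the refinement cannot literally satisfy the paper's ``disjoint, or equal with disjoint sets'' convention (read literally, that convention does not even make $\mathcal{S}$ closed under addition); this does not break your argument if $\mathcal{S}$ is read as finite sums of elementary families with pairwise disjoint supports $F\times(s,t]\times A$, because \eqref{eqDefiIntegSimpleProce} and the isometry \eqref{eqNormOfIntegralSimple} use only that disjointness, and cross terms with disjoint events vanish trivially.
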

\begin{proof}
This integral is clearly linear in the space of simple families and therefore in all $\Lambda(M,X,Y)$ by a classical approximation argument.    
\end{proof}

Let $Z$ be a Banach space with separable dual and consider $R \in \mathcal{L}(Y,Z)$. For each $\Phi \in \Lambda(M,X,Y)$ we define
$$
R\circ \Phi := \{ R \circ \Phi(\omega,r,u): (\omega,r,u) \in \Omega\times [0,T]\times U \}.
$$
If $R\neq 0$ and $\Phi\in \Lambda(M,X,Z)$ we have
\begin{equation}
\label{eqRhoMRoPhi}
\rho_{M,A}(R\circ \Phi) \leq \norm{R} \rho_{M,A}(\Phi)
\end{equation}
and therefore $R\circ \Phi \in \Lambda(M,X,Z).$ In fact, \eqref{eqRhoMRoPhi} follows from 
$$
\quadraVari{Q_M \Phi^* R^*z^*,\Phi^* R^*z^*}_{X,X^*} = \norm{R^*}^2 \quadraVari{Q_M \Phi^*y^*,\Phi^*y^*}_{X,X^*},
$$
where 
$$
\norm{y^*}_{Y^*} = \norm{R^*\left( \frac{z^*}{\norm{R^*}} \right)}_{Y^*} \leq \norm{z}_{Z^*} \leq 1.
$$
\begin{proposition}
\label{propMappingIntegContOpera} Let $Z$ be a Banach space with separable dual and consider $R \in \mathcal{L}(Y,Z)$. For each $\Phi \in \Lambda(M,X,Y)$ we have $R\circ \Phi \in \Lambda(M,X,Z)$ and $\Prob$-a.e. for every $t \in [0,T]$ and every $A\in \mathcal{A}$,
\begin{equation} \label{eqIntUnderContMapping}
\int^{t}_{0}\!\! \int_{A} R \circ \Phi (r,u) M (dr, du)= R^{**} \left(\int^{t}_{0}\!\! \int_{A} \Phi (r,u) M (dr, du) \right). 
\end{equation} 
\end{proposition}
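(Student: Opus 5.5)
First we interpret \eqref{eqIntUnderContMapping} cylindrically. The left side is a cylindrical process on $Z^*$. The right side, $R^{**}$ applied to the cylindrical process $I^{\Phi}_t(A)$ on $Y^*$, is by definition the cylindrical process $z^*\mapsto I^{\Phi}_t(A)(R^*z^*)$. So the claim is that for every $z^*\in Z^*$ and $A\in\calA$,
$$
I^{R\circ\Phi}_t(A)z^* = I^{\Phi}_t(A)(R^*z^*) \quad \text{for all } t\in[0,T],\ \Prob\text{-a.e.}
$$
Membership $R\circ\Phi\in\Lambda(M,X,Z)$ needs two things. First, predictability of $\quadraVari{Q_M(R\circ\Phi)^*z_1^*,(R\circ\Phi)^*z_2^*}$, which holds because $(R\circ\Phi)^*z^*=\Phi^*(R^*z^*)$ and $\Phi\in L(M,T,X,Y)$. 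Second, finiteness of $\rho_{M,A}$, which is \eqref{eqRhoMRoPhi}. We also need $R\circ\Phi$ to lie in the closure of simple families. If $(\Phi_n)\subset\mathcal{S}(M,X,Y)$ with $d_M(\Phi_n,\Phi)\to0$, then $R\circ\Phi_n$ is simple in $\mathcal{S}(M,X,Z)$, with operators $RS_i$ and the same sets. Linearity of $\Phi\mapsto R\circ\Phi$ together with \eqref{eqRhoMRoPhi} gives $\rho_{M,A}(R\circ\Phi_n-R\circ\Phi)\le\norm{R}\rho_{M,A}(\Phi_n-\Phi)\to0$, so $R\circ\Phi\in\Lambda(M,X,Z)$.

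For simple $\Phi$ as in \eqref{eqDefSimpleFamily}, the identity is immediate from \eqref{eqDefiIntegSimpleProce}, since $(RS_i)^*z^*=S_i^*R^*z^*$:
$$
I^{R\circ\Phi}_t(A)z^*=\sum_i\caract_{F_i}M((t\wedge s_i,t\wedge t_i],A\cap A_i)(S_i^*R^*z^*)=I^{\Phi}_t(A)(R^*z^*).
$$
This holds a.s. simultaneously for all $t$, because both sides are the same finite combination of càdlàg martingales.

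For general $\Phi$, fix $z^*$ and $A$ and put $y^*=R^*z^*$. By the definition of the integral, $I^{R\circ\Phi_n}(A)z^*\to I^{R\circ\Phi}(A)z^*$ in $\calM^2$, and $I^{\Phi_n}(A)y^*\to I^{\Phi}(A)y^*$ in $\calM^2$; explicitly, by the isometry \eqref{eqIsometria} both errors are bounded by $\norm{R}\rho_{M,A}(\Phi_n-\Phi)\norm{z^*}$. The left-hand sequence equals the right-hand one for every $n$, so the two limits coincide in $\calM^2$.

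The step needing care is passing from equality in $\calM^2$ to a.s. equality for all $t$ simultaneously. Two elements of $\calM^2$ that are equal in norm have $\Exp|m_T-m'_T|^2=0$. By Doob's $L^2$ maximal inequality, $\Exp\sup_{t\le T}|m_t-m'_t|^2\le 4\,\Exp|m_T-m'_T|^2=0$, so the càdlàg versions are indistinguishable. This gives the null set for each fixed $z^*$ and $A$. The exceptional set depends on $z^*$ and $A$, and that is consistent with the cylindrical meaning of the statement, in which equality of cylindrical processes is understood pointwise in $z^*$. If a single null set is required, one can restrict to a countable dense subset of $Z^*$ and to the sets $U_n$. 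Continuity in probability of both sides in $z^*$, via \eqref{eqIsometria}, then extends the identity to all of $Z^*$ up to versions.
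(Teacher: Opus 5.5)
Your proposal is correct and follows the paper's route: membership $R\circ\Phi\in\Lambda(M,X,Z)$ via \eqref{eqRhoMRoPhi}, the identity for simple integrands from $(RS_i)^*=S_i^*R^*$, and extension by density through the isometry \eqref{eqIsometria}. You also spell out details the paper leaves implicit: that the $R\circ\Phi_n$ are simple approximants, and that equality in $\calM^2$ gives indistinguishability via Doob's inequality, with the null set depending on $z^*$ and $A$.
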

\begin{proof} One can easily  check, using \eqref{eqRhoMRoPhi}, that $R \circ \Phi \in \Lambda(M,X,Z)$. It suffices to check that  \eqref{eqIntUnderContMapping} holds for elementary $\Phi$. In fact, if $\Phi = \caract_{F\times (s_0,t_0]\times B}S$, with $F\in \mathcal{F}_{s_0}$ and $S\in \mathcal{L}(X,Y)$, it follows that  
$$ 
R \circ \Phi(\omega,s,u)= \caract_{F\times (s_{0},t_{0}] \times B} RS 
$$
and then, $\Prob$-a.e. for all $t \in [0,T]$ and $z^* \in Z^*$,
\begin{eqnarray*}
\quadraVari{\int^{t}_{0}\!\! \int_{A} R \circ \Phi (r,u)\; M(dr,du),z^*}_{Z^{**},Z^*} 
& = & \caract_{F}
M((s_{0} \wedge t, t_{0} \wedge t],A\cap B)(S^{*}R^{*} z^*) 
\\
& = & \quadraVari{\int^{t}_{0}\!\! \int_{A} \Phi (r,u)\; M(dr,du),R^*z^*}_{Y^{**},Y^*}
\\
& = & \quadraVari{R^{**} \int^{t}_{0}\!\! \int_{A} \Phi (r,u)\; M(dr,du),z^*}_{Z^{**},Z^*}
\end{eqnarray*}
This extends by linearity to simple functions and by density to any $\Phi\in \Lambda(M,X,Y)$.
\end{proof}

\begin{corollary}
    Assume $X,Y$ and $Z$ are separable and reflexive Banach spaces. For $R \in \mathcal{L}(Y,Z)$ and $\Phi \in \Lambda(M,X,Y)$, $R\circ \Phi \in \Lambda(M,X,Z)$ and $\Prob$-a.e. we have, for every $t \in [0,T]$, $A\in \mathcal{A}$
$$
\int^{t}_{0}\!\! \int_{A} R \circ \Phi (r,u) M (dr, du)= R \left(\int^{t}_{0}\!\! \int_{A} \Phi (r,u) M (dr, du) \right). 
$$
\end{corollary}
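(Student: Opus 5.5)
This corollary follows directly from Proposition \ref{propMappingIntegContOpera}; the only extra input is reflexivity. Since $Y$ and $Z$ are reflexive and separable, their duals $Y^*$ and $Z^*$ are separable, so the standing hypotheses of Proposition \ref{propMappingIntegContOpera} hold with this choice of spaces. That proposition then gives $R\circ\Phi\in\Lambda(M,X,Z)$, and, $\Prob$-a.e. for every $t\in[0,T]$ and $A\in\mathcal{A}$,
$$
\int_0^t\!\!\int_A R\circ\Phi(r,u)\,M(dr,du)=R^{**}\left(\int_0^t\!\!\int_A \Phi(r,u)\,M(dr,du)\right).
$$
It remains to replace $R^{**}$ by $R$.

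For this we use the canonical isometric isomorphisms $J_Y:Y\to Y^{**}$ and $J_Z:Z\to Z^{**}$, which are onto by reflexivity. The bidual operator satisfies $R^{**}J_Y=J_ZR$, because for $y\in Y$ and $z^*\in Z^*$,
$$
\langle R^{**}J_Yy,z^*\rangle=\langle J_Yy,R^*z^*\rangle=\langle y,R^*z^*\rangle=\langle Ry,z^*\rangle .
$$
Identifying $Y^{**}$ with $Y$ and $Z^{**}$ with $Z$, as the paper already does for $X$, therefore turns $R^{**}$ into $R$, and the displayed identity becomes the claim.

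The one point that needs care is interpretation. Both integrals are cylindrical objects: $\int\!\int\Phi\,dM$ acts on $Y^*$ and $\int\!\int R\circ\Phi\,dM$ acts on $Z^*$. The identity in Proposition \ref{propMappingIntegContOpera} is really the cylindrical statement that, for every $z^*\in Z^*$,
$$
I^{R\circ\Phi}_t(A)z^*=I^{\Phi}_t(A)(R^*z^*).
$$
So the corollary should be read as this same statement, with "$R$ applied to a cylindrical process on $Y^*$" meaning composition with $R^*$.

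The a.e. set is handled as in the proof of the proposition. The identity holds for every $z^*$ and all $t$ simultaneously outside a null set, using càdlàg paths and separability of $Z^*$: one checks it on a countable dense set of $z^*$ and extends by continuity of the cylindrical maps. No further obstacle is expected.
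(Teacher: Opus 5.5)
Your proposal is correct and is the intended argument: the paper gives no separate proof, since the corollary is just Proposition~\ref{propMappingIntegContOpera} read through the reflexivity identifications $Y^{**}\cong Y$ and $Z^{**}\cong Z$, under which $R^{**}$ becomes $R$ (your computation $R^{**}J_Y=J_ZR$ shows this), while separability of $Y^*$ and $Z^*$ supplies the proposition's hypotheses. Your last paragraph is not needed, because the a.e.\ qualifier is inherited verbatim from the proposition; as written it also slightly overclaims, since extending from a countable dense set via continuity of $z^*\mapsto I_t(A)z^*$ into $L^0$ gives the identity for each $z^*$ with a possibly $z^*$-dependent null set, not one null set serving all $z^*$ at once.
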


A family $\Phi \in \Lambda$ that is independent of $u\in U$ can be approximated by simple families which are also independent of $u$. In the expression \eqref{eqDefSimpleFamily} for those simple families, we can assume $A_i=U$ for each $i$. This allows us to deduce the following.

\begin{corollary}
\label{lemmaPhiIndepU}
    If $\Phi\in \Lambda(M,X,Y)$  is independent of $u$ ($\Phi=\Phi(\omega,t)$) then
    $$
    \int_0^t \int_A \Phi(\omega,s)M(ds,du) = \int_0^t \Phi(\omega,s)M(ds,A),\quad A\in \calA
    $$
    where the right-hand side is a stochastic integral with respect to the cylindrical martingale $M(t,A),t\geq 0$.
\end{corollary}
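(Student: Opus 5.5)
The plan is to reduce to simple families and pass to the limit. We need a sequence $(\Phi_n)$ of simple families, each independent of $u$, with $d_M(\Phi_n-\Phi)\to 0$. This is what the remark preceding the corollary asserts. To justify it, I would use the fact that $\Phi\in\Lambda(M,X,Y)$ and $\Phi=\Phi(\omega,t)$. Take simple $\Psi_n=\sum_i \caract_{F_i\times(s_i,t_i]\times A_i}S_i$ converging to $\Phi$ in $d_M$. Then replace each $\Psi_n$ by a $u$-independent family of the form $\sum_j \caract_{G_j\times(s_j,t_j]}S_j$, with $A_j=U$, obtained from the time–$\omega$ part of the approximation.

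I expect this approximation step to be the main obstacle. The cleanest route I would try first is to go through the seminorms $\rho_{M,A}$. These are integrals against $\mu_M$ of $\langle Q_M\Phi^*y^*,\Phi^*y^*\rangle$, and for $u$-independent $\Phi$ one can approximate within the class of $\mathcal{P}_T$-measurable operator processes. The natural candidate is the space of cylindrical integrands for the martingale $M(\cdot,U_n)$, or for $M(\cdot,A)$, in which simple processes $\sum\caract_{F_i\times(s_i,t_i]}S_i$ are dense. Restricting attention to a fixed $A\in\calA$ and working with the seminorm $\rho_{M,A}$ suffices for the conclusion.

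For a simple $u$-independent family $\Phi=\sum_i\caract_{F_i\times(s_i,t_i]}S_i$, with disjoint time intervals, definition \eqref{eqDefiIntegSimpleProce} with $A_i=U$ gives, for $y^*\in Y^*$,
$$
I^{\Phi}_t(A)y^*=\sum_i\caract_{F_i}M((t\wedge s_i,t\wedge t_i],A)(S_i^*y^*),
$$
since $A\cap U=A$. This is exactly the elementary stochastic integral of $\Phi$ with respect to the cylindrical square integrable martingale $M(\cdot,A)$ evaluated at $y^*$. So the identity holds for simple $\Phi$, and by linearity for all $u$-independent simple families.

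For the passage to the limit, fix $A$. The isometry \eqref{eqIsometria} gives $\|I^{\Phi_n}(A)y^*-I^{\Phi}(A)y^*\|_{\calM^2}\le\rho_{M,A}(\Phi_n-\Phi)\|y^*\|\to 0$. On the other side, the stochastic integral with respect to the cylindrical martingale $M(\cdot,A)$ has the same isometric property. Its quadratic variation is $\int_{[0,t]\times A}\langle Q_M\cdot,\cdot\rangle\,d\operQuadraVari{M}$ by \eqref{existenceofQ}, so its $\calM^2$-norm for integrand $\Phi_n-\Phi$ is again $\rho_{M,A}(\Phi_n-\Phi)$ tested on $y^*$. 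Both sides are therefore $\calM^2$-limits of the same sequence and coincide. Finally, Doob's inequality turns $\calM^2$-equality of càdlàg martingales into equality $\Prob$-a.e. simultaneously for all $t\in[0,T]$.
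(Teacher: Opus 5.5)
Your argument is the paper's: approximate $\Phi$ by $u$-independent simple families with $A_i=U$, note that \eqref{eqDefiIntegSimpleProce} then reduces to the elementary integral against the cylindrical martingale $M(\cdot,A)$, and pass to the limit with the isometry \eqref{eqIsometria}; your remark that $\rho_{M,A}$-approximation for each fixed $A$ is enough is correct. The paper only asserts the approximation step, in the sentence before the corollary, and neither of your sketches proves it: ``discarding'' the sets $A_i$ from a simple $\Psi_n$ is not well defined when $\Psi_n$ carries different operators $S_i$ on disjoint $A_i$ over the same time slab (and since $Q_M$ varies with $u$, no single choice need stay $\rho_{M,A}$-close to $\Phi$ for every $A$), while invoking density of simple processes in the integrand class of $M(\cdot,A)$ is circular, because that class is defined as their closure and whether $\Phi$ belongs to it is precisely what has to be shown.
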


\subsection{Quadratic variation of the integral}
\label{subSectQuadraVariOfIntegral}

In what follows we fix $\Phi\in \Lambda(M,X,Y)$ and denote
$$
N := \int \Phi\; dM.
$$
Given $t>s$, $A\in \calA$ and $F\in \calF_s$, it is clear that
$$
\int_F \left| N_t(A)y^* \right|^2 d\Prob  =  \int_F \int_{(s,t]\times A} \quadraVari{Q_M \Phi^*y^*,\Phi^* y^*}_{X,X^*} d\operQuadraVari{M}\, d\Prob
$$
and this implies
$$
\Exp \left[ \left. \left| N_t(A)y^* \right|^2 \,\, \right| \,\, \calF_s \right] = \Exp \left[ \left. \int_{[0,t]\times A} \quadraVari{Q_M \Phi^*y^*,\Phi^* y^*}_{X,X^*} d\operQuadraVari{M} \,\, \right| \,\, \calF_s \right].
$$
Notice that, for $A\in \calA$ and $y^* \in Y^*$ fixed,
$$
N_t(A)y^* - N_s(A)y^* = I^{\caract_{(s,t]} \Phi}_{T} (A)y^*.
$$
It follows that
$$
\Exp \left[ \left. \left| I^{\caract_{(s,t]} \Phi}_{T} (A)y^* \right|^2 \,\, \right| \,\, \calF_s  \right] =
\Exp \left[ \left. \left| N_{t} (A)y^* \right|^2 \,\, \right| \,\, \calF_s  \right] - 
\left| N_{s} (A)y^* \right|^2
$$
and consequently
\begin{equation}
\label{eqConditionalIsometry}
    \Exp \left[ \left. \left| N_{t} (A)y^* \right|^2 \,\, \right| \,\, \calF_s  \right] = \left| N_{s} (A)y^* \right|^2 + \Exp \left[ \left. 
\int_{(s,t]\times A} \quadraVari{Q_M \Phi^* y^*,\Phi^* y^*}_{X,X^*} d\operQuadraVari{M} \,\, \right| \,\, \calF_s  \right].
\end{equation}
This shows that, $\Prob$-a.e.
$$
\nu^{N}_{y^*}\left( (s,t]\times A \right)  =  \quadraVari{N(A)y^*}^t_s = \int_{(s,t]\times A} \quadraVari{Q_M \Phi^* y^*,\Phi^* y^*}_{X,X^*} d\operQuadraVari{M}.
$$
Consider $(y_n^*)$ a sequence in the unit ball of $Y^*$ such that $y_n^*\fle y^*$ (norm convergence in $Y^*$). Since $\Phi\in L(M,X,Y)$ we have 
$$
\inner{Q_M\Phi^*y_n^*}{\Phi^*y_n^*} \fle \inner{Q_M\Phi^*y^*}{\Phi^*y^*}
$$
and by Fatou's Lemma, for $0\leq s \leq t$ and $A\in \mathcal{A}$
$$
\nu_{y^*}^{N}((s,t]\times A) \leq \liminf \nu_{y_n^*}^{N}((s,t]\times A) \leq \sup \nu_{y_n^*}^{N}((s,t]\times A).
$$
This shows, according to \cite{CCFM:SPDE}*{Remark 5.6}, that the family $(\nu^{N}_{y^*})$ satisfies the sequential boundedness property. Besides, for $C=(s,t]\times A$ we have $\Prob$-a.e.
\begin{eqnarray*}
    \left(\sup_{n\in \N} \nu^{N}_{y_n^*}\right)(C) & = & \int_{C}  \sup_{n\in \N} \quadraVari{\Phi(r,u) Q_M(r,u) \Phi^*(r,u) y_n^*,y_n^*}_{Y^*,Y} \operQuadraVari{M}(dr,du) \\
    & \leq & \int_C \norm{\Phi Q_M \Phi^*}_{\calL(Y^*,Y)}d\operQuadraVari{M}.
\end{eqnarray*}
With the additional assumption
\begin{equation}
\label{eqAdditionalHip}
    \forall A\in \mathcal{A} \quad \int_{\Omega\times [0,T]\times A} \norm{\Phi Q_M \Phi^*}_{\calL(Y^*,Y)}d\mu_M < \infty
\end{equation}
according to Theorem $5.8$ and Lemma 2.2 in \cite{CCFM:SPDE},  $N$ has a unique quadratic variation
\begin{equation}
\label{eqQuadraticVariationN}
\operQuadraVari{N} (C) = \int_{C} \norm{\Phi Q_M \Phi^*}_{\calL(Y^*,Y)} d\operQuadraVari{M} \quad \Prob\textup{-a.e.}
\end{equation}

In the last calculation, we used the fact that $\Phi Q_M \Phi^*$ is symmetric and positive, so the norm can be computed as a supremum over the diagonal (see \cite{VakhaniaTarieladzeChobanyan}*{Prop III.1.2, p.p. 146}). 

\begin{theorem}
\label{theoDensities}
If $M$ satisfies \eqref{eqAdditionalHip} and $\Phi\in \Lambda(M,X,Y)$, then $N = \int \Phi\;dM$ has a unique quadratic variation given by \eqref{eqQuadraticVariationN}. In particular
$$
\frac{d\operQuadraVari{N}}{d\operQuadraVari{M}} = \norm{\Phi Q_M \Phi^*}_{\calL(Y^*,Y)}\; \Prob\textup{-a.e.}, \quad \frac{d\mu_N}{d\mu_M} = \norm{\Phi Q_M \Phi^*}_{\mathcal{L}(Y^*,Y)},\quad Q_{N} = \frac{ \Phi Q_M \Phi^* }{\norm{ \Phi Q_M \Phi^*}_{\calL(Y^*,Y)}} .
$$
\end{theorem}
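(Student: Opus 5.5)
The plan is to collect the computations made just before the statement and then identify the three densities by the defining properties of $\operQuadraVari{N}$, $\mu_N$ and $Q_N$.

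\emph{Quadratic variation.} From the conditional isometry \eqref{eqConditionalIsometry}, for each $y^*\in Y^*$ the intensity measure of $N$ on rectangles $(s,t]\times A$ is
$$
\nu^N_{y^*}(C)=\int_C \quadraVari{\Phi Q_M\Phi^* y^*,y^*}_{Y,Y^*}\, d\operQuadraVari{M}.
$$
The Fatou argument above shows that $(\nu^N_{y^*})$ has the sequential boundedness property (Remark 5.6 in \cite{CCFM:SPDE}). So Theorem 5.8 of \cite{CCFM:SPDE} gives a unique quadratic variation, namely $\operQuadraVari{N}=\sup_n \nu^N_{y_n^*}$ for a dense sequence $(y_n^*)$ in the unit ball of $Y^*$. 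Hypothesis \eqref{eqAdditionalHip} guarantees that this measure is finite on $[0,T]\times A$.

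To compute the supremum, I would use Lemma 2.2 of \cite{CCFM:SPDE}: the supremum of measures with densities $f_n$ with respect to a common measure is the measure with density $\sup_n f_n$. The density is then $\sup_n\quadraVari{\Phi Q_M\Phi^*y_n^*,y_n^*}$. Since $\Phi Q_M\Phi^*$ is positive and symmetric, this equals $\norm{\Phi Q_M\Phi^*}_{\calL(Y^*,Y)}$ by Prop. III.1.2 of \cite{VakhaniaTarieladzeChobanyan} together with density of $(y_n^*)$. This proves \eqref{eqQuadraticVariationN} and the first density. Positivity and symmetry hold $\operQuadraVari{M}$-a.e. by property \ref{propPositSymmetQM} of $Q_M$, because $\quadraVari{\Phi Q_M\Phi^*y^*,z^*}=\quadraVari{Q_M\Phi^*y^*,\Phi^*z^*}$.

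\emph{Doléans measure.} By definition, $\mu_N(C)=\Exp\int \caract_C\,d\operQuadraVari{N}$. Substituting the density gives
$$
\mu_N(C)=\Exp\int \caract_C\norm{\Phi Q_M\Phi^*}\,d\operQuadraVari{M}.
$$
By \eqref{eqEqualIntegralDoleans} this equals $\int_C \norm{\Phi Q_M\Phi^*}\,d\mu_M$. For this I need predictability of $\norm{\Phi Q_M\Phi^*}$. It follows from writing the norm as the countable supremum $\sup_n \quadraVari{Q_M\Phi^*y_n^*,\Phi^*y_n^*}$, each term of which is predictable because $\Phi\in L$.

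\emph{Operator $Q_N$.} Set $Q_N=\Phi Q_M\Phi^*/\norm{\Phi Q_M\Phi^*}$ where the denominator is nonzero, and $Q_N=0$ (or any fixed norm-one operator) on the $\operQuadraVari{N}$-null set where it vanishes. Then
$$
\int_{[0,t]\times A}\quadraVari{Q_N y_1^*,y_2^*}\,d\operQuadraVari{N}=\int_{[0,t]\times A}\quadraVari{\Phi Q_M\Phi^*y_1^*,y_2^*}\,d\operQuadraVari{M}.
$$
By polarization of the diagonal identity for $\nu^N$, the right-hand side is $\quadraVari{N(A)y_1^*,N(A)y_2^*}_t$, which is \eqref{existenceofQ} for $N$. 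The required properties of $Q_N$ (predictability, positivity, symmetry, norm one $\operQuadraVari{N}$-a.e.) are immediate from the formula.

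Uniqueness of $Q_N$ holds $\operQuadraVari{N}$-a.e. From the identity for all $y_1^*,y_2^*$ in a countable dense set and all rectangles, a monotone class argument gives equality of the densities. This is the step I expect to require the most care: handling the null sets uniformly over the countable family and ensuring that $Y^{**}$-valued objects are identified correctly when $Y$ is not reflexive.
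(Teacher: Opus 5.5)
Your proposal is correct and takes essentially the same route as the paper. The intensity measures of $N$ come from \eqref{eqConditionalIsometry}, and sequential boundedness follows from Fatou. Theorem 5.8 and Lemma 2.2 of \cite{CCFM:SPDE}, together with the diagonal-supremum formula for the norm of a positive symmetric operator, give $\operQuadraVari{N}$. Taking expectations with \eqref{eqEqualIntegralDoleans} gives $\mu_N$, and polarizing \eqref{eqConditionalIsometry} gives $Q_N$.

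Your additions only fill in what the paper leaves implicit: predictability of $\norm{\Phi Q_M\Phi^*}$ as a countable supremum, the choice of $Q_N$ where that norm vanishes, and uniqueness via a countable generating class. Your $Y^{**}$ concern does not arise, since $\Phi Q_M\Phi^*$ already maps $Y^*$ into $Y$ because $X$ is reflexive.
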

\begin{proof}
It only rests to verify the identities. The first two follow directly from \eqref{eqQuadraticVariationN}, taking expectation in the second case. Now we use polarization in \eqref{eqConditionalIsometry} and get
$$
\Exp \left[ N_t(A)y^*_{1} N_t(A)y^*_{2} \, \vline \, \mathcal{F}_{s} \right] 
=  N_s(A)y^*_{1} N_s(A)y^*_{2}
+  \Exp \left(  \int_{s}^{t} \int_A \quadraVari{\Phi Q_M \Phi^{*}y^*_{1},y^*_{2}}_{Y,Y^*}\, d\operQuadraVari{M} \, \vline \, \mathcal{F}_{s} \right)  .   
$$
Therefore, with $C=(s,t]\times A$
$$
\alpha_{N}(C)(y^*_1,y^*_2) = \int_{C} \quadraVari{\Phi Q_M \Phi^*y^*_1,y^*_2}_{Y,Y^*} d\operQuadraVari{M} =
\int_C \frac{\quadraVari{\Phi Q_M \Phi^*y^*_1,y^*_2}_{Y,Y^*}}{\norm{ \Phi Q_M \Phi^*}_{\calL(Y^*,Y)}} d\operQuadraVari{N}
$$
and the third identity follows.
\end{proof}

\begin{remark}
In the Hilbert space case we can express 
$$
\nu^{N}_{g}\left( C \right) =  \int_{ C } \norm{Q_M^{1/2}\Phi^* g}_H^2 d\operQuadraVari{M},\qquad 
\operQuadraVari{N}(C) = \int_{C} \norm{\Phi Q_M^{1/2}}_{\calL(H,G)}^2 d\operQuadraVari{M} 
$$
whence
$$
\frac{d\operQuadraVari{N}}{d\operQuadraVari{M}} =\norm{\Phi Q_M^{1/2}}_{\calL(H,G)}^2\;\;\Prob-a.e.\qquad
\frac{d\mu_{N}}{d\mu_{M}} = \norm{\Phi Q_M^{1/2}}_{\calL(H,G)}^2
$$
and
\begin{equation}
    \label{eqQN}
    Q_N=\frac{\Phi Q_M\Phi^*}{\norm{\Phi Q_M \Phi^*}_{\mathcal{L}(G)}} = \frac{\Phi Q_M\Phi^*}{\norm{\Phi Q_M^{1/2}}^2_{\mathcal{L}(H,G)}}.
\end{equation}
\end{remark}

\subsection{Application: White Noise Measure on a Banach space} \label{subsectWhiteNOiseBanach}

In this section we introduce a construction of a white noise measure as an orthogonal cylindrical martingale-valued measure on a Banach space. 

Consider a white noise $W$ on $\left(\mathbb{R}_{+} \times U, \mathcal{B}\left(\mathbb{R}_{+}\right) \otimes \mathcal{B}(U),Leb \otimes \lambda\right)$. $W$ is defined at least on the cylinder sets $(s, t] \times A$, $0 \leq s \leq t$, $A \in \mathcal{A}$. Recall that 
$\mathcal{A}$ is a ring of finite $\lambda$-measure sets, $\mathcal{A} \subseteq \mathcal{B}(U)$. We define
$$
M_t(A)=W([0, t] \times A), \quad M_t(A) \overset{d}{=} N(0, t \lambda(A)).
$$
Notice that $\left\{M_t(A): t \geq 0, A \in \mathcal{A}\right\}$ is a martingale measure with respect to its natural filtration. For $A,B$ disjoint, $M_t(A)$ and $M_t(B)$ are independent, hence orthogonal. We also have
$$
\langle M(A)\rangle_t=t \lambda(A),\qquad A\in \mathcal{A}.
$$
We assume $\lambda(U_n)<\infty$ for each $n$, where $U_n \uparrow U$ and $\mathcal{B}(U_n) \subseteq \mathcal{A}$ for each $n$.

As shown in \cite{CCFM:SPDE}*{Example 3.13}, $M$ is a (real-valued) orthogonal martingale-valued measure. We can define a cylindrical orthogonal martingale-valued measure on $X^*$ by
$$
M_t(A)x^* := W([0,t]\times A)\quadraVari{x^*,l}
$$
where $l\in X^{**}=X$. It is not difficult to show that $M$ is in fact a cylindrical orthogonal martingale-valued measure on $X^*$, with intensity measures
$$
\nu_{x^*}((s,t]\times A) = (t-s)\lambda(A) \abs{\quadraVari{x^*,l}}^2
$$
and therefore, it has a unique quadratic variation given by
$$
\operQuadraVari{M} = \sup_{\norm{x^*}= 1} \nu_{x^*} = \norm{l}^2Leb\otimes \lambda.
$$
We also have, for $C=(s,t]\times A$,
$$
\alpha_M(\omega)(C)(x_1^*,x_2^*) = \tfrac{1}{4}\left( \nu_{x_1^* + x_2^*}(C) - \nu_{x_1^* - x_2^*}(C) \right) = (t-s)\lambda(A)\quadraVari{x_1^*,l}\quadraVari{x_2^*,l}
$$
and by Theorem $5.16$ in \cite{CCFM:SPDE}, $Q_M$ is the constant process
$$
\quadraVari{Q_M(\omega,r,u)x_1^*,x_2^*} = \frac{\quadraVari{x_1^*,l}\quadraVari{x_2^*,l}}{\norm{l}^2}. 
$$

Now consider $\Phi\in \Lambda(M,X,Y)$ and define $N:=\int \Phi\;dM$. Notice that
\begin{eqnarray*}
   \norm{\Phi Q_M \Phi^*}_{\calL(Y^*,Y)} &=& \sup_{\norm{y_1^*}\leq 1} \sup_{\norm{y_2^*}\leq 1} \quadraVari{\Phi Q_M \Phi^* y_1^*,y_2^*} \\
   &=& 
\sup_{\norm{y_1^*}\leq 1} \sup_{\norm{y_2^*}\leq 1} \quadraVari{Q_M \Phi^* y_1^*,\Phi^*y_2^*} \\
&=& 
\sup_{\norm{y_1^*}\leq 1} \sup_{\norm{y_2^*}\leq 1} \frac{\quadraVari{\Phi^*y_1^*,l}\quadraVari{\Phi^*y_2^*,l}}{\norm{l}^2} = \frac{\norm{l\circ \Phi^*}^2}{\norm{l}^2}.
\end{eqnarray*}
The additional hypothesis \eqref{eqAdditionalHip} becomes
$$
\forall\; A\in \mathcal{A}\quad \Exp 
\int_{0}^T \int_A \norm{ l\circ \Phi^*(\omega,t,u)}^2 dt\; \lambda(du) < \infty.
$$
With this hypothesis, Theorem \ref{theoDensities} guarantees the existence of a unique quadratic variation for $N$, that is
$$
d\operQuadraVari{N} = \frac{\norm{l\circ \Phi^*}^2}{\norm{l}^2} d\operQuadraVari{M} = \norm{ l\circ \Phi^*}^2 dt\; \lambda(du)
$$
and likewise
$$
d\mu_{N} = \frac{\norm{l \circ \Phi^*}^2}{\norm{l}^2} d\mu_M = \norm{ l\circ \Phi^*}^2 d\Prob\; dt\; \lambda(du)
$$
The same theorem gives 
$$
\quadraVari{Q_{N}y_1^*,y_2^*} = 
\frac{\quadraVari{\Phi^*y_1^*,l}\quadraVari{\Phi^*y_2^*,l}}{\norm{ l\circ \Phi^*}^2}.
$$

\subsection{Associativity of the stochastic integral}\label{subsectAssociaStochIntegral}

Throughout this section we assume that $X,Y$ are reflexive and separable Banach spaces, while $Z$ is a Banach space with separable dual.

Being the integral a new cylindrical orthogonal martingale-valued measure, we can think of integrating with respect to it. Concretely, let 
$$
N := \int \Phi dM
$$
and $\Psi\in \Lambda(N,Y,Z)$.

\begin{theorem}
\label{theoAssociatIntegral}
With the previous hypothesis we have
$$
\int \Psi\; dN = \int \Psi \circ \Phi\; dM.
$$
More precisely, for $t\in [0,T]$ and $A\in\mathcal{A}$,
\begin{equation}
\label{eqAssociativityOfStochIntegral}
    \int_0^t\int_A \Psi(s,u)N(ds,du) = \int_0^t \!\!\int_A \Psi(s,u)\circ \Phi(s,u) M(ds,du)\quad \Prob-a.e.
\end{equation}
\end{theorem}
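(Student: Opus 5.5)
The plan is the usual density argument: verify \eqref{eqAssociativityOfStochIntegral} for simple $\Psi$, then pass to the limit using the isometry \eqref{eqIsometria} for both integrals. A preliminary step is to check that $\Psi\circ\Phi\in\Lambda(M,X,Z)$. By Theorem \ref{theoDensities}, $Q_N\,d\operQuadraVari{N} = \Phi Q_M\Phi^*\,d\operQuadraVari{M}$ and $d\mu_N = \norm{\Phi Q_M\Phi^*}\,d\mu_M$. Hence for $z^*\in Z^*$,
$$
\int_{\Omega\times[0,T]\times A}\quadraVari{Q_N\Psi^*z^*,\Psi^*z^*}\,d\mu_N=\int_{\Omega\times[0,T]\times A}\quadraVari{Q_M\Phi^*\Psi^*z^*,\Phi^*\Psi^*z^*}\,d\mu_M,
$$
so $\rho_{N,A}(\Psi)=\rho_{M,A}(\Psi\circ\Phi)$ for every family $\Psi$ with the required predictability. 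In particular the map $\Psi\mapsto\Psi\circ\Phi$ is a $d$-isometry from $(L(N,Y,Z),d_N)$ into $(L(M,X,Z),d_M)$. Predictability of $\quadraVari{Q_M(\Psi\circ\Phi)^*z_1^*,(\Psi\circ\Phi)^*z_2^*}$ follows from the same identity pointwise: it equals $\norm{\Phi Q_M\Phi^*}\quadraVari{Q_N\Psi^*z_1^*,\Psi^*z_2^*}$, a product of predictable functions, at least $\mu_M$-a.e.

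\textbf{Step 1: elementary $\Psi$.} Take $\Psi=\caract_{F\times(s_0,t_0]\times B}S$ with $S\in\calL(Y,Z)$. By definition \eqref{eqDefiIntegSimpleProce},
$$
\int_0^t\!\!\int_A\Psi\,dN\,(z^*)=\caract_F\,N((s_0\wedge t,t_0\wedge t],A\cap B)(S^*z^*).
$$
This equals $\caract_F I^{\caract_{(s_0,t_0]\times B}\Phi}_t(A)(S^*z^*)$. By Proposition \ref{propMappingIntegContOpera} applied with $R=S$, and since $\caract_F$ is $\calF_{s_0}$-measurable and commutes with the integral, this is $I^{\caract_{F\times(s_0,t_0]\times B}S\circ\Phi}_t(A)z^*=\int_0^t\!\int_A\Psi\circ\Phi\,dM\,(z^*)$. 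The step that pulls $\caract_F$ inside needs a small lemma: for $F\in\calF_{s_0}$, $\caract_F\int\caract_{(s_0,t_0]}\Phi\,dM=\int\caract_{F\times(s_0,t_0]}\Phi\,dM$. I will prove it first for simple $\Phi$, where it is immediate, and then pass to the limit in $\calM^2$, using $\rho_{M,A}(\caract_F\Theta)\le\rho_{M,A}(\Theta)$. Linearity (Proposition \ref{propLinearityIntegral}) then gives the identity for all $\Psi\in\mathcal{S}(N,Y,Z)$.

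\textbf{Step 2: limit.} For $\Psi\in\Lambda(N,Y,Z)$, pick simple $\Psi_n$ with $\rho_{N,A}(\Psi_n-\Psi)\to0$ for all $A$. By the isometry above, $\rho_{M,A}(\Psi_n\circ\Phi-\Psi\circ\Phi)\to0$, and each $\Psi_n\circ\Phi\in\Lambda(M,X,Z)$ by Step 1. So $\Psi\circ\Phi$ lies in the closure $\Lambda(M,X,Z)$. Both sides of \eqref{eqAssociativityOfStochIntegral} are then $\calM^2$-limits of equal sequences, by \eqref{eqIsometria}, and hence agree $\Prob$-a.e. for each $z^*$, uniformly in $t$ by Doob.

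\textbf{Main obstacle.} I expect the main difficulty to be the identity relating $Q_N\,d\operQuadraVari{N}$ and $\Phi Q_M\Phi^*\,d\operQuadraVari{M}$ at the level of the Doléans measures. Theorem \ref{theoDensities} is stated under \eqref{eqAdditionalHip}, so without that assumption I would work directly with the intensity measures $\nu^N_{z}$ computed before \eqref{eqQuadraticVariationN}. The norm $\rho_{N,A}$ only involves these intensities through $\int\quadraVari{Q_N\Psi^*z^*,\Psi^*z^*}\,d\mu_N$. I would first check the claimed equality for simple $\Psi$, using the formula for $\nu^N_{y^*}$, and then extend it to the predictable integrands by a monotone class argument.
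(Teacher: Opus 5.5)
Your proposal is correct. Its skeleton matches the paper's: the paper also first shows $\Psi\circ\Phi\in\Lambda(M,X,Z)$ via Theorem \ref{theoDensities} and the identity $\rho_{M,A}(\Psi\circ\Phi)=\rho_{N,A}(\Psi)$. It then proves the formula for simple $\Psi$, and concludes by approximating $\Psi$ with simple $\Psi_n$ and using \eqref{eqIsometria} on both sides.

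Where you genuinely differ is the case of elementary $\Psi$ with general $\Phi$.

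The paper's route: it first proves \eqref{eqAsociatSimple} for $\Phi$ and $\Psi$ both simple, by refining to common time intervals and computing directly. It then approximates $\Phi$ by simple $\Phi_n$. With $N_n=\int\Phi_n\,dM$, it controls $\int_A\Psi\,dN_n-\int_A\Psi\,dN$ through the isometry applied to $\int(\Phi_n-\Phi)\,dM$. The right-hand side is controlled through $\rho_{M,A}(\Psi\circ(\Phi_n-\Phi))\le\norm{S}\rho_{M,A}(\Phi_n-\Phi)$. So the paper varies the integrator.

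Your route: you keep $\Phi$ and $N$ fixed and rewrite $\caract_F N((s_0\wedge t,t_0\wedge t],A\cap B)(S^*z^*)$ as an integral against $M$, using three structural facts:
\begin{enumerate}
\item localization, i.e.\ the increment of $N$ over $(s_0,t_0]$ on $A\cap B$ equals $\int_0^t\int_A\caract_{(s_0,t_0]\times B}\Phi\,dM$ (you assert this with ``This equals'' without comment, but it follows by the same density argument as your $\caract_F$ lemma);
\item Proposition \ref{propMappingIntegContOpera}, to move $S$ inside;
\item pulling an $\calF_{s_0}$-measurable indicator inside an integrand supported after $s_0$.
\end{enumerate}
Each of these is itself a density argument from simple $\Phi$. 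So the approximation in $\Phi$ has not disappeared; it is packaged into reusable lemmas. What you gain is that you avoid the common-refinement bookkeeping for two simple families and the changing integrators $N_n$. Your localization lemma also supplies a justification for the identity $N_t(A)y^*-N_s(A)y^*=I^{\caract_{(s,t]}\Phi}_T(A)y^*$, which the paper uses without proof in Section \ref{subSectQuadraVariOfIntegral}. The paper's route needs nothing beyond the explicit formula for simple integrals and the isometry.

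On your ``main obstacle'': the paper simply invokes Theorem \ref{theoDensities} in its lemma. So \eqref{eqAdditionalHip} is tacitly among the standing hypotheses; it is what makes $Q_N$ and $\mu_N$, and hence $\Lambda(N,Y,Z)$ itself, available. Your fallback via intensity measures and a monotone class argument is therefore not needed.
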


We start by considering $\Phi$ and $\Psi$ elementary, that is
$$
\Phi = \caract_{F\times (s_0,t_0]\times B}S,\quad
\Psi = \caract_{G\times (s_1,t_1]\times C}T
$$
where $F\in \mathcal{F}_{s_0}$, $G\in\mathcal{F}_{s_1}$, $S\in \mathcal{L}(X,Y)$ and $T\in \mathcal{L}(Y,Z)$. As we will explain, it is enough to consider the cases $(s_0,t_0]\cap (s_1,t_1] = \emptyset$ and $(s_0,t_0]=(s_1,t_1]$. In the first case it is easy to see that 
$$
\int \Psi\; dN = 0 = \int \Psi\circ \Phi\; dM.
$$
In the second case, for $z^*\in Z^*$, $t\in [0,T]$ and $A\in \mathcal{A}$ we have
\begin{eqnarray*}
    \left(\int \Psi\; dN\right)(t,A)(z^*) & = & 
\caract_{G} N\left( (t\land s_0,t\land t_0],A\cap B \right)(T^*z^*) \\
& = & \caract_{F\cap G} M\left( (t\land s_0,t\land t_0],A\cap B\cap C \right)(S^*T^*z^*) \\
& = & \left(\int \Psi\circ \Phi\; dM\right)(t,A)(z^*).
\end{eqnarray*}

For $\Phi$ and $\Psi$ simple, we can can assume that the intervals $(s_i,t_i]$ of both integrands are the same, as we can always make partitions thinner and any two of these intervals are allowed to be disjoint or equal. By using the result just established for elementary families, it easily follows that, for $\Phi$ and $\Psi$ simple,
\begin{equation}
\label{eqAsociatSimple}
\left(\int \Psi\; dN\right)(t,A)(z^*) 
= \left(\int \Psi\circ \Phi\; dM\right)(t,A)(z^*).
\end{equation}

\begin{lemma}
    Given $\Phi\in \Lambda(M,X,Y)$ and $\Psi \in \Lambda(N,Y,Z)$ we have
    $\Psi\circ\Phi\in \Lambda(M,X,Z)$.
\end{lemma}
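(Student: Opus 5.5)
The plan is to reduce everything to the isometry-type identity
$$
\rho_{M,A}(\Psi\circ\Phi)=\rho_{N,A}(\Psi),\qquad A\in\mathcal{A},
$$
valid for every $\Psi$ satisfying the defining conditions of $L(N,T,Y,Z)$. Once this holds, membership in the closure $\Lambda(M,X,Z)$ follows by approximating $\Psi$ with simple families.

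\emph{Step 1 (the density identity).} For $z_1^*,z_2^*\in Z^*$ we have
$$
\quadraVari{Q_M\Phi^*\Psi^*z_1^*,\Phi^*\Psi^*z_2^*}_{X,X^*}=\quadraVari{\Phi Q_M\Phi^*(\Psi^*z_1^*),\Psi^*z_2^*}_{Y,Y^*}.
$$
By the polarized conditional isometry in the proof of Theorem \ref{theoDensities}, the measure $\quadraVari{\Phi Q_M\Phi^* y_1^*,y_2^*}\,d\operQuadraVari{M}$ equals the covariation measure of $N(\cdot)y_1^*$ and $N(\cdot)y_2^*$. That covariation measure in turn equals $\quadraVari{Q_N y_1^*,y_2^*}\,d\operQuadraVari{N}$ by \eqref{existenceofQ} applied to $N$. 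Hence, as measures on $[0,T]\times U$ for a.e. $\omega$, and after taking expectations as measures with respect to $\mu_M$ and $\mu_N$,
$$
\quadraVari{\Phi Q_M\Phi^*y_1^*,y_2^*}\,d\operQuadraVari{M}=\quadraVari{Q_Ny_1^*,y_2^*}\,d\operQuadraVari{N}.
$$
We must pass from fixed $y_1^*,y_2^*$ to the random arguments $y_i^*=\Psi^*z_i^*$. I would do this by a monotone class argument: the identity is first checked for $\Psi$ taking countably many values on predictable sets, using separability of $Y^*$. Under \eqref{eqAdditionalHip} one can instead simply invoke Theorem \ref{theoDensities}, which gives $d\mu_N=\norm{\Phi Q_M\Phi^*}\,d\mu_M$ and $Q_N=\Phi Q_M\Phi^*/\norm{\Phi Q_M\Phi^*}$, so the identity follows pointwise. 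Integrating with $z_1^*=z_2^*$ over $\Omega\times[0,T]\times A$ and taking the supremum over the unit ball of $Z^*$ yields $\rho_{M,A}(\Psi\circ\Phi)=\rho_{N,A}(\Psi)<\infty$. The same pointwise relation, with the density $\norm{\Phi Q_M\Phi^*}$ and the process $\quadraVari{Q_N\Psi^*z_1^*,\Psi^*z_2^*}$ both predictable, gives predictability of $\quadraVari{Q_M(\Psi\circ\Phi)^*z_1^*,(\Psi\circ\Phi)^*z_2^*}$. So $\Psi\circ\Phi\in L(M,T,X,Z)$.

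\emph{Step 2 (simple $\Psi$).} Let $\Psi=\sum_i \caract_{G_i\times(s_i,t_i]\times C_i}T_i$. By Proposition \ref{propMappingIntegContOpera} and \eqref{eqRhoMRoPhi}, $T_i\circ\Phi\in\Lambda(M,X,Z)$. Multiplying an element of $\Lambda$ by a predictable rectangle indicator $\caract_{G\times(s,t]\times C}$ keeps it in $\Lambda$. Indeed, if $\Phi_k\in\mathcal{S}$ approximates $T_i\circ\Phi$, then $\caract_{G\times(s,t]\times C}\Phi_k$ is again simple after refining the partition. Moreover $\rho_{M,A}$ of an indicator times $\Theta$ is at most $\rho_{M,A}(\Theta)$, since the integrand is nonnegative. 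By linearity, $\Psi\circ\Phi\in\Lambda(M,X,Z)$.

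\emph{Step 3 (general $\Psi$).} Choose simple $\Psi_n$ with $\rho_{N,A}(\Psi_n-\Psi)\to0$ for all $A$. By Step 1 applied to $\Psi_n-\Psi$,
$$
\rho_{M,A}(\Psi_n\circ\Phi-\Psi\circ\Phi)=\rho_{N,A}(\Psi_n-\Psi)\to0 .
$$
By Step 2, $\Psi_n\circ\Phi\in\Lambda(M,X,Z)$, which is $d_M$-closed, so $\Psi\circ\Phi\in\Lambda(M,X,Z)$.

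The main obstacle is Step 1 without assuming \eqref{eqAdditionalHip}: one has to identify $\quadraVari{Q_N\cdot,\cdot}\,d\operQuadraVari{N}$ with $\quadraVari{\Phi Q_M\Phi^*\cdot,\cdot}\,d\operQuadraVari{M}$ as bilinear-form-valued densities. This must hold simultaneously for all arguments so that random arguments $\Psi^*z^*$ can be substituted. I would first try the countable dense subset of $Y^*$ plus continuity in $y^*$ of both sides, which both sides have by the operator bounds on $Q_N$ and $Q_M$.
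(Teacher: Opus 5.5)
Your proposal is correct and follows essentially the paper's argument. It uses Theorem \ref{theoDensities} to get the isometry $\rho_{M,A}(\Psi\circ\Phi)=\rho_{N,A}(\Psi)$ and the predictability of $\langle Q_M(\Psi\circ\Phi)^*z_1^*,(\Psi\circ\Phi)^*z_2^*\rangle$, then treats simple $\Psi$, then passes to general $\Psi\in\Lambda(N,Y,Z)$ by density. The only cosmetic difference is at the elementary-$\Psi$ step: the paper composes $\Psi$ directly with simple approximants $\Phi_n$ of $\Phi$ and uses $\rho_{M,A}(\Psi\circ(\Phi_n-\Phi))\le\norm{T}\rho_{M,A}(\Phi_n-\Phi)$, whereas you obtain $T_i\circ\Phi\in\Lambda(M,X,Z)$ from Proposition \ref{propMappingIntegContOpera} and then localize by the indicator, which amounts to the same thing.
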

\begin{proof}
In fact, notice that:
\begin{enumerate}
     \item For each $(\omega,t,u)$, $\Psi(\omega,t,u)\circ \Phi(\omega,t,u) \in \mathcal{L}(X,Z)$
     \item From Theorem \ref{theoDensities}
     we  have, for each $(z_1^*,z_2^*)\in Z^*\times Z^*$
    $$
    \quadraVari{Q_N \Psi^*z_1^*,\Psi^*z_2^*} = \frac{\quadraVari{Q_M (\Psi \Phi)^*z_1^*,(\Psi\Phi)^*z_2^*}_{X,X^*} }{\norm{\Phi Q_M \Phi^*}_{\mathcal{L}(Y^*,Y)}}
    $$
    where the left-hand side and the norm on the right-hand side are predictable. It follows that,
    for each $(z_1^*,z_2^*)\in Z^*\times Z^*$, $\quadraVari{Q_M (\Psi \Phi)^* z_1^*,(\Psi \Phi)^*z_2^*}_{X,X^*}$ is predictable.
     \item We also have
\begin{eqnarray*}
\rho_{M,A}(\Psi\circ\Phi) &=& \sup_{\norm{z^*}_{Z^*}\leq 1} \left[\int_{\Omega\times [0,T]\times A} \quadraVari{Q_M (\Psi\circ\Phi)^* z^*,(\Psi\circ\Phi)^* z^*}_{X,X^*} d\mu_M \right]^{1/2}\\
&=& \sup_{\norm{z^*}_{Z^*}\leq 1} \left[\int_{\Omega\times [0,T]\times A} \quadraVari{\Phi Q_M \Phi^*(\Psi^*z^*),\Psi^*z^*}_{Y^{**},Y^*} d\mu_M \right]^{1/2}\\
&=& \sup_{\norm{z^*}_{Z^*}\leq 1} \left[\int_{\Omega\times [0,T]\times A} \quadraVari{Q_N \Psi^*z^*,\Psi^*z^*}_{Y^{**},Y^*} d\mu_N \right]^{1/2} = \rho_{N,A}(\Psi)<\infty.
\end{eqnarray*}
\end{enumerate}
for any $A\in \mathcal{A}$. 
Consider now $\Psi = \caract_{G\times (s_1,t_1]\times C}T$ and $\Phi\in \Lambda(M,X,Y)$. Let $(\Phi_n)$ be a sequence in $\mathcal{S}(M,X,Y)$ such that $\rho_{M,A}(\Phi_n-\Phi)\fle 0$ for each $A\in \mathcal{A}$. Notice that
\begin{equation}
\label{eqIneqRhoPhinPhi}
    \rho_{M,A}(\Psi\circ \Phi_n - \Psi\circ \Phi) = \rho_{M,A}(\Psi\circ(\Phi_n-\Phi)) \leq \norm{T} \rho_{M,A}(\Phi_n-\Phi)
\end{equation}
and therefore $\rho_{M,A}(\Psi\circ \Phi_n - \Psi\circ \Phi) \fle 0$.
Since each $\Psi\circ \Phi_n$ is simple, this shows that $\Psi\circ \Phi \in \Lambda(M,X,Z)$. By linearity, for each $\Psi \in \mathcal{S}(N,Y,Z)$ and $\Phi\in \Lambda(M,X,Y)$ we have $\Psi\circ \Phi \in \Lambda(M,X,Z)$. Finally, let $\Psi\in \Lambda(N,Y,Z)$ and $(\Psi_n) \in \mathcal{S}(N,Y,Z)$ such that $\rho_{N,A}(\Psi_n-\Psi)\fle 0$ for any $A\in\mathcal{A}$. For any $\Phi\in \Lambda(M,X,Y)$ and any $A\in\mathcal{A}$ we have
\begin{equation}
\label{eqRhoMARhoNA}
\rho_{M,A}(\Psi_n\circ \Phi - \Psi\circ \Phi) = \rho_{M,A}((\Psi_n- \Psi)\circ \Phi) = \rho_{N,A}(\Psi_n-\Psi)\fle 0.
\end{equation}
This completes the proof of the lemma.
\end{proof}\medskip

\begin{proof}[Proof of Theorem \ref{theoAssociatIntegral}]
Now let $\Psi = \caract_{G\times (s_1,t_1]\times C}T$ and $\Phi\in \Lambda(M,X,Y)$. Let $\Phi_n\in\mathcal{S}(M,X,Y)$ with $\rho_{M,A}(\Phi_n-\Phi)\fle 0$ for each $A\in\mathcal{A}$. Notice that
$$
\int_A \Psi dN_n-\int_A \Psi dN = \caract_F (N_n-N)((t\land s_1,t\land t_1],A\cap C)(T^*z^*)
$$
where $N_n-N = \int (\Phi_n-\Phi)dM$, so by \eqref{eqIsometria} and and \eqref{eqIneqRhoPhinPhi} we have
$$
\norm{\int_A \Psi dN_n-\int_A \Psi dN}_{\mathcal{L}(Z^*,\mathcal{M}_T^2)} \leq
2\norm{T^*} \rho_{M,A\cap C}(\Phi_n-\Phi)\fle 0.
$$
So with limits in $\mathcal{L}(Z^*,\mathcal{M}_T^2)$  we have, for any $A\in \mathcal{A}$,
$$
\int_A \Psi dN = \lim_{n\fle\infty} \int_A \Psi dN_n = 
\lim_{n\fle\infty} \int_A \Psi \circ \Phi_n \; dM = \int_A \Psi \circ \Phi\;dM,
$$
the last equality because $\Psi\circ\Phi_n$ is simple and converges to $\Psi\circ\Phi$. By linearity, it follows that
$$
\int_A \Psi\; dN = \int_A \Psi \circ \Phi\;dM
$$
 for all $\Phi\in \Lambda(M,X,Y)$ and $\Psi \in \mathcal{S}(N,Y,Z)$.

Finally, for $\Phi\in \Lambda(M,X,Y)$ and $\Psi \in \Lambda(N,Y,Z)$ let $\{\Psi_n\}$ a sequence in $\mathcal{S}(N,Y,Z)$ such that $\rho_{N,A}(\Psi_n-\Psi)\fle 0$ for all $A\in \mathcal{A}$. By definition, \eqref{eqIsometria} and \eqref{eqRhoMARhoNA} we have
$$
\int_A \Psi dN = \lim_{n \fle\infty} \int_A \Psi_n\; dN = 
\lim_{n\fle\infty} \int_A \Psi_n \circ \Phi\;dM = \int_A \Psi\circ \Phi\;dM.
$$
\end{proof}

\begin{remark}
    It is worth noting that we can integrate the identity operator $id:X\fle X$, as the constant elementary family $\textbf{id} = \caract_{\Omega \times [0,T] \times U}id$, since for $A\in \mathcal{A}$,
    $$
    \rho_{M,A}(\textbf{id}) = \sup_{\norm{y^*}\leq 1} \int_{\Omega\times [0,T]\times A} \quadraVari{Q_My^*,y^*}_{X,X^*} d\mu_M \leq \mu_M(\Omega\times [0,T]\times A) <\infty.
    $$
    We obtain
    $$
    I^{\textbf{id}}_t(A)y^* = M((0,t],A)id^*y^*=M_t(A)y^*.
    $$
    In fewer words
    $$
    \int dM = M,\qquad\text{or}\qquad \int_0^t \int_A M(ds,du) = M_t(A).
    $$
\end{remark}

\subsubsection{The Hilbert Space-valued case}
Consider a reflexive separable Banach space $X$ and $H,G$ Hilbert spaces. Consider $\Phi\in\Lambda(M,X,Y)$, $N:= \int \Phi\;dM$ and $\Psi\in \Lambda^2(N,H,G)$, according to notation in \cite{CCFM:SPDE}. The integral $\int \Psi\; dN$ is a classical square integrable martingale with values in $G$.

\begin{lemma}
    If $X$ is also Hilbert, then $\Psi\circ\Phi \in \Lambda^2(M,T,X,G)$.
\end{lemma}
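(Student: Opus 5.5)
Here $X=H$ is a Hilbert space and $Y$ is taken to be a Hilbert space $H$ as well (so that $\Phi$ maps into the state space of $N$). By the notation of \cite{CCFM:SPDE}, $\Lambda^2(N,H,G)$ consists of predictable families $\Psi$ such that $\Psi Q_N^{1/2}$ is Hilbert--Schmidt, with
$$
\norm{\Psi}^2_{\Lambda^2(N,A)} = \Exp\int_{[0,T]\times A} \norm{\Psi Q_N^{1/2}}^2_{\mathcal{L}_2(H,G)}\, d\operQuadraVari{N} <\infty,\qquad A\in\calA,
$$
and the target space $\Lambda^2(M,T,X,G)$ is defined analogously with $Q_M$ and $\operQuadraVari{M}$. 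The plan is to transfer the integrability condition from $N$ to $M$ using the densities of Theorem \ref{theoDensities}, namely formula \eqref{eqQN}, and to transfer predictability by the same kind of identity used in step (ii) of the preceding lemma.

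The key computation is the following. By \eqref{eqQN}, $Q_N = \Phi Q_M\Phi^*/\norm{\Phi Q_M^{1/2}}^2_{\mathcal{L}(X,H)}$ on the set where the denominator is nonzero. Hence
$$
\Psi Q_N \Psi^* = \frac{(\Psi\Phi)Q_M(\Psi\Phi)^*}{\norm{\Phi Q_M^{1/2}}^2}.
$$
Since $\norm{B Q^{1/2}}^2_{\mathcal{L}_2}=\operatorname{tr}(BQB^*)$, this gives
$$
\norm{\Psi Q_N^{1/2}}^2_{\mathcal{L}_2(H,G)}\, \norm{\Phi Q_M^{1/2}}^2 = \norm{(\Psi\Phi)Q_M^{1/2}}^2_{\mathcal{L}_2(X,G)}.
$$
Combining this with $d\operQuadraVari{N} = \norm{\Phi Q_M^{1/2}}^2\, d\operQuadraVari{M}$, we obtain
$$
\Exp\int_{[0,T]\times A}\norm{(\Psi\Phi)Q_M^{1/2}}^2_{\mathcal{L}_2}\,d\operQuadraVari{M} = \Exp\int_{[0,T]\times A}\norm{\Psi Q_N^{1/2}}^2_{\mathcal{L}_2}\,d\operQuadraVari{N}<\infty.
$$
On the set where $\Phi Q_M^{1/2}=0$ we have $(\Psi\Phi)Q_M^{1/2}=0$ as well, so the integrand vanishes there and no mass is lost. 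The identity above can also be checked coordinatewise through an orthonormal basis $(g_k)$ of $G$: summing $\quadraVari{Q_M(\Psi\Phi)^*g_k,(\Psi\Phi)^*g_k}$ over $k$ reproduces the Hilbert--Schmidt norms on both sides.

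For predictability, I would argue as in the preceding lemma. For each $g_1,g_2\in G$,
$$
\quadraVari{Q_M(\Psi\Phi)^*g_1,(\Psi\Phi)^*g_2} = \norm{\Phi Q_M^{1/2}}^2\,\quadraVari{Q_N\Psi^*g_1,\Psi^*g_2}.
$$
The right-hand side is a product of predictable processes: the first factor is predictable because $\Phi\in\Lambda(M,X,H)$ and the norm can be written as a countable supremum over a dense set, and the second because $\Psi\in\Lambda^2(N,H,G)$. If the definition of $\Lambda^2$ in \cite{CCFM:SPDE} requires predictability of $\Psi\Phi$ itself rather than only of these bilinear forms, I would instead use approximation: take simple families $\Psi_n\to\Psi$ in $\Lambda^2(N)$, note that each $\Psi_n\circ\Phi$ is a limit of predictable families, and use the isometry above to pass to the limit.

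The step I expect to be the main obstacle is matching the precise definition of $\Lambda^2$ from \cite{CCFM:SPDE}, in particular whether its measurability requirement is stated for $\Psi Q^{1/2}$ or for $\Psi$ itself. The approximation route handles either version. The remaining work is bookkeeping on the null set where $\norm{\Phi Q_M^{1/2}}=0$.
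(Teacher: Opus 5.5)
Your proposal is correct and follows essentially the same route as the paper. You use \eqref{eqQN} to get $(\Psi\Phi)Q_M(\Psi\Phi)^* = \norm{\Phi Q_M^{1/2}}^2\,\Psi Q_N\Psi^*$, take traces (the paper does this by summing over an orthonormal basis of $G$), convert $\norm{\Phi Q_M^{1/2}}^2\, d\mu_M$ into $d\mu_N$, and obtain predictability from the same bilinear identity. Your explicit handling of the set where $\norm{\Phi Q_M^{1/2}}=0$, and your countable-supremum argument for the predictability of that norm, fill in details the paper leaves implicit.
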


\begin{proof}
    First notice that, for each $(\omega,t,u)$, $\Psi(\omega,t,u)\circ \Phi(\omega,t,u)\in \mathcal{HS}(X,G)$. 
    By using \eqref{eqQN} we  have, for each $g\in G$
    $$
    \quadraVari{\Psi Q_N \Psi^*g,g} = \frac{\quadraVari{\Psi\Phi Q_M (\Psi\Phi)^*g,g} }{\norm{\Phi Q_M \Phi^*}_{\mathcal{L}(H,G)}}
    $$
    where the left-hand side and the norm on the right-hand side are predictable. It follows that
    for each $g\in G$, $\Psi\Phi Q_M(\Psi\Phi)^*g$ is predictable. 

    Finally, if $\{g_n\}$ is an orthonormal basis of $G$ we have
    \begin{eqnarray*}
        \int \norm{(\Psi\Phi) Q_M^{1/2}}^2_{\mathcal{HS}(X,G)} d\mu_M &=& 
        \int \norm{Q_M^{1/2}(\Psi\Phi)^*)}^2_{\mathcal{HS}(G,X)} d\mu_M \\
        &=&  
        \int \sum_{n=1}^\infty \inner{Q_M^{1/2}(\Psi\Phi)^*g_n}{Q_M^{1/2}(\Psi\Phi)^*g_n} d\mu_M \\
        &=& 
        \int \sum_{n=1}^\infty \inner{(\Psi\Phi) Q_M(\Psi\Phi)^*g_n}{g_n}_G d\mu_M \\
        &=& 
        \int \sum_{n=1}^\infty \norm{\Phi Q_M \Phi^*} \inner{Q_N^{1/2}\Psi^*g_n}{Q_N^{1/2}\Psi^*g_n}_G d\mu_M \\
        &=& 
        \int \norm{\Phi Q_M^{1/2}}^2_{\mathcal{L}(X,H)} \norm{\Psi Q_N^{1/2}}^2_{\mathcal{HS}(H,G)}  d\mu_M \\
        &=& 
        \int \norm{\Psi Q_N^{1/2}}^2_{\mathcal{HS}(H,G)} d\mu_N < \infty.
    \end{eqnarray*}
    This completes the proof.
\end{proof}

\subsection{Cylindrical vs radonified stochastic integral}\label{subSectCylinVsRadonifiedIntegral}

Within the context of separable Hilbert spaces, a theory of vector-valued stochastic integration is introduced in \cite{CCFM:SPDE}, for square-integrable integrands with respect to a cylindrical orthogonal M.V.M. In this section we show that if an integrand $\Phi$ is stochastically integrable in the sense of the work \cite{CCFM:SPDE}, then it is also stochastically integrable in the sense of Section \ref{sectStochIntegAsACylMVM} and the corresponding integral coincides (in a sense to be explained below). 

We start by recalling the construction of the stochastic integral in \cite{CCFM:SPDE}. Let $H$ and $G$ denote two separable Hilbert spaces. For fixed $(\omega,t,u)\in \Omega\times [0,T]\times U$ and  $Q_{M} = Q_M(\omega,t,u)$, let 
$$
H_{Q_{M}}= Q_{M}^{1/2}(H),\quad \left( h,g \right)_{H_{Q_{M}}} = \left( Q_{M}^{-1/2} h, Q_{M}^{-1/2} g \right)_H
$$
where $Q_{M}^{-1/2}$ is the pseudo inverse of $Q_{M}^{1/2}$. Then $H_{Q_{M}}$ is a separable Hilbert space. 

Let  $\Phi: \Omega \times[0,T] \times U \rightarrow \mathcal{L}(H,G)$ be such that
\begin{enumerate}
\item For each $(\omega,t,u) \in \Omega\times [0,T]\times U$, $\Phi(\omega,t,u)\in \mathcal{HS}(H_{Q_{M}},G)$.
\item The mapping $ (\omega,t,u) \mapsto \Phi(\omega,t,u)h$ 
is $\mathcal{P}_{T}\otimes \mathcal{B}(U)/\mathcal{B}(G)$-measurable for every $h \in H$,
\item $\norm{\Phi}_{\Lambda^2(M)}$ is finite, where this quantity is defined by
\begin{equation}
\label{NewIntegrands}
\norm{\Phi}^2_{\Lambda^2(M)} \defeq
\int_{\Omega\times [0,T]\times U} \| \Phi(\omega,t,u)\circ Q^{1/2}_M(\omega,t,u)\|^2_{\mathcal{HS}(H,G)} \, d\mu_M.
\end{equation}
\end{enumerate}
Following Definition 6.4 in \cite{CCFM:SPDE} the collection of all $\Phi$ satisfying the conditions listed above is denoted by $\Lambda^2(M,H,G)$. This space is Hilbert when equipped with the Hilbertian norm $\norm{\cdot}_{\Lambda^2(M)}$. 

If $\Phi$ is of the simple form \eqref{eqDefSimpleFamily} with each $S_i \in \mathcal{HS}(H,G)$, then it is evident that $\Phi \in \Lambda^2(M,H,G)$. Moreover, in such a case the radonified stochastic integral is defined as 
\begin{equation}
\label{NewDefIntSimpleIntegrand}
R_t(\Phi)  = 
\sum_{i=1}^{n}  \caract_{F_i}(\omega) Y_{i}(t)
\end{equation}
where $Y_{i}$ is the square integrable martingale taking values in $G$ and satisfying
\begin{equation} 
\label{eqDefRadonProcessY}
\left( Y_{i}(t),g \right)_{G} = M((s_i \land t, t_i\land t], A_i) (S_{i}^*g)
\end{equation}
according to Theorem 2.1 of \cite{AlvaradoFonseca:2021}. In Theorem 6.7 in \cite{CCFM:SPDE} it is shown that 
$(R_{t}(\Phi): t \in [0,T]) \in \mathcal{M}_{T}^{2}(G)$ and for each $t \in [0,T]$, we have $\Exp (R_{t}(\Phi))=0$ and also       
\begin{equation}
\label{eqItoIsometryRadonifiedIntegral}
\Exp \left[ \norm{R_{t}(\Phi)}_{G}^{2} \right] = \norm{\Phi}^2_{\Lambda^2(M)}.
\end{equation}
In particular, the mapping $R: \mathcal{S}\rightarrow \mathcal{M}^{2}_{T}(G)$, $\Phi \mapsto R=(R_{t}(\Phi): t \in [0,T])$ is linear continuous. Now, since each $\Phi \in \Lambda^2(M,H,G)$ can be approximated in the norm $\norm{\cdot}_{\Lambda^2(M)}$ by a sequence of simple processes, then a standard approximation argument shows that the mapping $R: \mathcal{S}\rightarrow \mathcal{M}^{2}_{T}(G)$ has a linear continuous extension $R: \Lambda^2(M,H,G) \rightarrow \mathcal{M}^{2}_{T}(G)$ such that $R=(R_{t}(\Phi): t \in [0,T])$ satisfies $\Exp (R_{t}(\Phi))=0$ and 
\eqref{eqItoIsometryRadonifiedIntegral}. 

For every $t \in [0,T]$, denote $R_{t}(\Phi)$ by $\displaystyle (R)\int_0^t\!\! \int_U \Phi(r,u)M(dr,du)$. For any $A \in \calA$ it is clear that $\mathbbm{1}_{A} \Phi \in \Lambda^2(M,T)$, then we define 
$$ (R)\int_0^t\!\! \int_A \Phi(r,u)M(dr,du) \defeq R_{t}(\mathbbm{1}_{A} \Phi). $$
As it was the case for the cylindrical integral of Section \ref{sectStochIntegAsACylMVM}, it is not difficult to check that 
$$ \left( (R)\int_0^t\!\! \int_A \Phi(r,u)M(dr,du): t \in [0,T], A \in \calA \right)$$
is an $H$-valued orthogonal martingale-valued measure (see Definition 3.3 in \cite{CCFM:SPDE}).  

The relationship between the radonified integral and the non-radonifying integral is explained in the next result. 

\begin{proposition}
\label{Propcompatibilityofintegrals}
Let $\Phi \in \Lambda^2(M,H,G)$. Then $\Phi \in \Lambda(M,H,G)$ and for every $t \in [0,T]$, $A \in \calA$ and $g \in G$, 
\begin{equation}\label{eqCompatibilityOfIntegrals}
\inner{(R)\int_0^t\!\! \int_A \Phi(r,u)M(dr,du)}{g}_{G} = \int_0^t\!\! \int_A \Phi(r,u)M(dr,du)(g).     
\end{equation}
\end{proposition}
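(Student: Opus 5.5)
The plan is to prove the two claims in order: first the membership $\Phi\in\Lambda(M,H,G)$ through a norm comparison, and then identity \eqref{eqCompatibilityOfIntegrals} by a density argument starting from simple families.

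\textbf{Step 1: comparison of norms.} Let $\Phi\in\Lambda^2(M,H,G)$. For $g\in G$ with $\norm{g}\le 1$ we have
$$
\quadraVari{Q_M\Phi^*g,\Phi^*g} = \norm{Q_M^{1/2}\Phi^*g}_H^2 \le \norm{Q_M^{1/2}\Phi^*}^2_{\mathcal{HS}(G,H)} = \norm{\Phi Q_M^{1/2}}^2_{\mathcal{HS}(H,G)}.
$$
Integrating against $\mu_M$ over $\Omega\times[0,T]\times A$ and taking the supremum over $g$ gives
$$
\rho_{M,A}(\Phi)\le \norm{\caract_A\Phi}_{\Lambda^2(M)}\le\norm{\Phi}_{\Lambda^2(M)}.
$$
Predictability of $\quadraVari{Q_M\Phi^*g_1,\Phi^*g_2}$ follows from the measurability of $\Phi h$ and of $Q_M$. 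One expands in an orthonormal basis of $H$ to see that this bilinear expression is a countable sum of predictable products. So $\Phi\in L(M,H,G)$.

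\textbf{Step 2: membership in $\Lambda$.} By the construction recalled above, there are simple families $\Phi_n$ with Hilbert--Schmidt coefficients such that $\norm{\Phi_n-\Phi}_{\Lambda^2(M)}\to0$. By Step 1, $\rho_{M,A}(\Phi_n-\Phi)\to0$ for every $A\in\calA$, hence $d_M(\Phi_n,\Phi)\to0$. Each $\Phi_n$ lies in $\mathcal{S}$, so $\Phi\in\Lambda(M,H,G)$.

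\textbf{Step 3: the identity for simple families.} For $\Phi$ simple with $S_i\in\mathcal{HS}(H,G)$, \eqref{eqDefRadonProcessY} gives
$$
\inner{R_t(\caract_A\Phi)}{g}_G=\sum_i\caract_{F_i}M((s_i\wedge t,t_i\wedge t],A\cap A_i)(S_i^*g),
$$
which is exactly $I^\Phi_t(A)g$ as defined in \eqref{eqDefiIntegSimpleProce}.

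\textbf{Step 4: passage to the limit.} For general $\Phi$, take $\Phi_n$ as in Step 2. By \eqref{eqItoIsometryRadonifiedIntegral},
$$
\Exp\abs{\inner{R_t(\caract_A\Phi_n)-R_t(\caract_A\Phi)}{g}}^2\le\norm{g}^2\norm{\Phi_n-\Phi}^2_{\Lambda^2(M)}\to0.
$$
By \eqref{eqIsometria} and Step 1, $I^{\Phi_n}_t(A)g\to I^\Phi_t(A)g$ in $L^2$. The two sides of \eqref{eqCompatibilityOfIntegrals} coincide for each $n$ by Step 3, so they coincide a.e. in the limit.

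Both sides are càdlàg martingales in $t$, so the equality holds $\Prob$-a.e. simultaneously for all $t$. I expect the only delicate point to be the predictability requirement in Step 1. It should follow by writing
$$
\quadraVari{Q_M\Phi^*g_1,\Phi^*g_2}=\sum_k\inner{\Phi Q_M^{1/2}e_k}{g_1}\inner{\Phi Q_M^{1/2}e_k}{g_2},
$$
using the measurability assumptions on $\Phi$ and on $Q_M$.
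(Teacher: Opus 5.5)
Your proof is correct and follows essentially the same route as the paper. You bound $\rho_{M,A}(\Phi)$ by $\norm{\Phi}_{\Lambda^2(M)}$ via $\norm{Q_M^{1/2}\Phi^*g}_H\le\norm{\Phi Q_M^{1/2}}_{\mathcal{HS}(H,G)}$, check the identity on simple Hilbert--Schmidt integrands directly from \eqref{eqDefiIntegSimpleProce} and \eqref{eqDefRadonProcessY}, and pass to the limit using \eqref{eqIsometria} and \eqref{eqItoIsometryRadonifiedIntegral}. You are in fact a bit more careful than the paper, which leaves implicit both the predictability condition in the definition of $L(M,H,G)$ and the use of $\Lambda^2$-density of simple families to conclude $\Phi\in\Lambda(M,H,G)$ from the norm inequality.
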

\begin{proof}
First, observe that for every $n \in \N$ we have
\begin{eqnarray*}
\rho_{M,U_n}(\Phi)^2  & \leq &  \int_{\Omega\times [0,T]\times U_n} \sup_{\norm{g}_{G}\leq 1} \inner{Q_M \Phi^* g}{\Phi^* g}_{H} d\mu_{M,n} \\
& \leq  & \int_{\Omega\times [0,T]\times U} \| \Phi Q^{1/2}_M\|^2_{\mathcal{HS}(H,G)} \, d\mu_M = \norm{\Phi}^2_{\Lambda^2(M)}.  
\end{eqnarray*}
Hence  $d_{M}(\Phi) \leq \norm{\Phi}_{\Lambda^2(M)}$. This shows $\Phi \in \Lambda(M,H,G)$. 

Now, observe that if $\Phi$ is of the simple form  \eqref{eqDefSimpleFamily} with each $S_i \in \mathcal{HS}(H,G)$,  by \eqref{eqDefiIntegSimpleProce}, 
\eqref{NewDefIntSimpleIntegrand} and \eqref{eqDefRadonProcessY} one can conclude that \eqref{eqCompatibilityOfIntegrals} holds. 

Finally, if $\Phi \in \Lambda^2(M,H,G)$ and $(\Phi_{n}: n \in \N)$ is a sequence of simple processes converging to $\Phi$ in the norm $\norm{\cdot}_{\Lambda^2(M)}$, then $(\Phi_{n}: n \in \N)$ also converges to $\Phi$ in $d_{M}$. By a standard approximation argument using \eqref{eqIsometria} and  \eqref{eqItoIsometryRadonifiedIntegral} we conclude  \eqref{eqCompatibilityOfIntegrals} holds for $\Phi$. 
\end{proof}

% \begin{example}
% {\color{red} Ejemplo de que $L^2$ no es cerrado en $\Lambda$}   
% \end{example}

% We denote
% $$
% \int \Phi dM = \int \Phi(t,u) M(dt,du) := I^{\Phi},\qquad 
% \int_0^t \int_A \Phi(s,u) M(ds,du) := I_t^{\Phi}(A).
% $$

\begin{remark}
From now on if $\Phi \in \Lambda^2(M,H,G)$, due to (\ref{eqCompatibilityOfIntegrals}), we will omit the $(R)$ from the notation of the radonified integral.  

\end{remark}

%%%%%%%%%%%%%%%%% White Noise %%%%%%%%%%%%%%%%%%%%%%

\section{The martingale representation theorem in the continuous paths setting}\label{sectionMartinRepreTheorem}

In this section we pursue the establishment of a continuous path version for the martingale representation theorem for cylindrical martingale-valued measures and within the context of separable Banach spaces. To do this, we first introduce a cylindrical analogue for the white noise measure. Later we prove a L\'evy's characterization theorem for such process. Our final step is to prove a martingale representation theorem.

\subsection{Cylindrical White Noise Measure}\label{subSectCyliWhiteNoise}

Let $H$ be a separable Hilbert space. Let $Q$ be a bounded, non-negative, self-adjoint operator on $H$ so that for each $h \in H$, $Qh$ can be represented (convergence in $H$) as 
\begin{equation}\label{representationCovariaQDefiCylinWhineNoiseMeasure}
Q h=  \sum_{n=1}^{\infty} \lambda_{n} \inner{h_{n}}{h}_{H}h_{n},   
\end{equation}
where $(\lambda_{n}: n\in \N)$ is a bounded sequence of non-negative numbers and $(h_{n}: n \in \N)$ is an orthonormal set in $H$ (both sequences independent of $h$).
Let $(W_{n} : n \in \N)$, be a sequence of independent white noise measures on $\left(\mathbb{R}_{+} \times U, \mathcal{B}\left(\mathbb{R}_{+}\right) \otimes \mathcal{B}(U),Leb \otimes \lambda\right)$, that is, for each $A \in \mathcal{A}$, the processes ($W_n(t,A)$) are independent. In addition, for each $t \geq 0$, if $A$ and $B$ are disjoint, then $W_n(t,A)$ and $W_n(t, B)$ are independent. 

A \emph{cylindrical $Q$-white noise measure} in $H$ based on $\left(\mathbb{R}_{+} \times U, \mathcal{B}\left(\mathbb{R}_{+}\right) \otimes \mathcal{B}(U),Leb \otimes \lambda\right)$ is defined for every $h \in H$, $t \geq 0$ and $A \in \mathcal{B}(U)$ as
\begin{equation}
\label{eqDefWN}
W(t,A)h= \sum_{n=1}^{\infty} \lambda_{n}^{1/2} W_{n}(t,A)\inner{h_{n}}{h}_{H}.
\end{equation} 
It is easy to check that $W$ is an orthogonal cylindrical martingale-valued measure on $H$. It is enough to consider the properties of the white noise measures $(W_n)$ (see Example 3.13 of \cite{CCFM:SPDE}) and a simple computation of the second moment of $W(t,A)h$ for $t \geq 0$, $A \in \mathcal{B}(U)$ and $h \in H$.   

Observe  that by the independence of the $W_{n}$'s we have
\begin{equation*}
\quadraVari{W(A)h}_{t} 
=  \sum_{n=1}^{\infty} \lambda_{n} \inner{h_{n}}{h}^{2}_{H} \quadraVari{W_{n}(A)}_{t} \\
=  t \lambda(A) \sum_{n=1}^{\infty} \lambda_{n} \inner{h_{n}}{h}^{2}_{H}.      
\end{equation*}

\begin{remark}
Observe that if $Q=I$ (the identity operator on $H$), then $\lambda_{n}=1$ for every $n \in \N$ and we can choose $(h_{n}: n \in \N)$ to be an orthonormal basis. Hence $\sum_{n=1}^{\infty} \lambda_{n}^{2} \inner{h_{n}}{h}^{2}_{H} =\norm{h}_{H}^{2}  $. So we have $\quadraVari{W(A)h}_{t}= t \lambda(A) \norm{h}_{H}^{2}$ and the cylindrical white noise is ``standard''. 
\end{remark}

We have shown that the intensity measures of $W$ are given by 
$$ \nu_{h}(t,A)= t \lambda(A) \sum_{n=1}^{\infty} \lambda_{n} \inner{h_{n}}{h}^{2}_{H}. $$

Hence, the quadratic variation of $W$ is given by
$$ \operQuadraVari{W}(t,A)=\sup_{\norm{h}=1} \nu_{h}(t,A) = t \lambda(A) \sup_{n \geq 1} \lambda_{n} =  t \lambda(A) \lambda_{1} = t \lambda(A) \norm{Q}_{\mathcal{L}(H)}. $$
Therefore, 
\begin{equation}\label{eqQuadraVariaCylindricalWhiteNoise}
    \operQuadraVari{W}= \norm{Q}_{\mathcal{L}(H)} \, \mbox{Leb} \otimes \lambda. 
\end{equation}
We have also, for $C=(s,t] \times A$, 
\begin{equation*}
\alpha_{W}(\omega)(C)(h,h) 
=  \frac{1}{4} \nu_{2h}(C)  
=   (t-s) \lambda(A) \sum_{n=1}^{\infty} \lambda_{n} \inner{h_{n}}{h}^{2}_{H}.
\end{equation*}
So by Theorem $5.16$ in \cite{CCFM:SPDE}, we have
$$ \int_{C} \inner{Q_{W}(\omega,r,u)h}{h}_{H} \operQuadraVari{W}(dr,du)= (t-s) \lambda(A) \sum_{n=1}^{\infty} \lambda_{n} \inner{h_{n}}{h}^{2}_{H}.$$
Hence
$$ \inner{Q_{W}(\omega,r,u)h}{h}_{H}  = \frac{1}{\norm{Q}_{\mathcal{L}(H)}} \sum_{n=1}^{\infty} \lambda_{n} \inner{h_{n}}{h}^{2}_{H} = \frac{\inner{Q h}{h}_{H}}{\norm{Q}_{\mathcal{L}(H)}} . $$
Then, we have $Q_{W}$ is the constant process
\begin{equation}\label{eqOperatorQWofCylindWhiteNoise}
Q_{W}(\omega,r,u) = \frac{Q}{\norm{Q}_{\mathcal{L}(H)}}. 
\end{equation}

In particular, if $\norm{Q}_{\calL(H)} = 1$ we have $Q_W (\omega,r,u)= Q$.

\begin{remark}\label{remarkQtraceclass}
    Assume
\begin{equation}\label{eqSumLambdaN}
       \sum_{n=1}^{\infty} \lambda_n < \infty. 
\end{equation}
    Then \eqref{eqDefWN} defines an $H$-valued white noise. To see this,  observe that
 $$  \sum_{n=1}^\infty \Exp \norm{\lambda_{n}^{1/2} W_{n}(t,A)h_{n}}_{H}^2 = \sum_{n=1}^\infty \lambda_n \Exp  \abs{W_n(t,A)}^2 
    = t\lambda(A) \sum_{n=1}^\infty \lambda_n < \infty. $$     
    % \begin{eqnarray*}
    %   \sum_{n=1}^\infty \Exp \norm{W(t,A)h_{n}}_{H}^2    
    % % & = &  \sum_{k=1}^\infty \sum_{n=1}^\infty \lambda_n  \Exp \abs{W_n(t,A)}^2 \inner{h_n}{h_k}_H^2 \\
    % &= &  \sum_{n=1}^\infty \lambda_n \Exp  \abs{W_n(t,A)}^2 
    % = t\lambda(A) \sum_{n=1}^\infty \lambda_n < \infty.
    % \end{eqnarray*}
Hence, the random series 
$$ W(t,A) \defeq \sum_{n=1}^{\infty} \lambda_{n}^{1/2} W_{n}(t,A) h_{n}, $$
is absolutely summable in $L^{2}(\Omega,\mathcal{F}, \Prob;H)$. Moreover, since the sequence $( W_{n})$ is independent, the series also converges $\Prob$-a.e., this can be concluded as an application of the It\^{o}-Nisio theorem. It is worth to mention that the condition \eqref{eqSumLambdaN} is satisfied if $Q$ is a trace-class operator, and in such a case we can choose the sequence $(\lambda_{n})$ so that $\mbox{Trace}(Q) = \sum_{n=1}^{\infty} \lambda_n $. Hence, by the calculation above we have
$$ \Exp \norm{W(t,A)}_H^2 = t\lambda(A) \mbox{Trace}(Q).$$
\end{remark}

%\newpage 

\subsection{L\'evy's Characterization}\label{subSectLevyCharact}

In this section we prove a version of the L\'evy's characterization theorem for the  cylindrical $Q$-white noise measure introduced in the previous section. 

\begin{theorem}[L\'evy's Characterization]\label{theoLevyCharacterization}
    Let $M = ( M(t,A) :  t \geq 0, A \in \mathcal{A} )$ be a cylindrical orthogonal martingale valued measure with continuous paths.  Assume that $M$ has operator quadratic variation $\Gamma_M = Q (\textup{Leb}\otimes \lambda)$, where $\lambda$ is a $\sigma$-finite measure on $U$, which is finite on the sets in $\mathcal{A}$, and $Q$ is a bounded, non-negative, self-adjoint operator on $H$ which has representation \eqref{representationCovariaQDefiCylinWhineNoiseMeasure}.  Then, $M$ is a cylindrical $Q$-white noise measure in $H$ based on $\left(\mathbb{R}_{+} \times U, \mathcal{B}\left(\mathbb{R}_{+}\right) \otimes \mathcal{B}(U),Leb \otimes \lambda\right)$.
\end{theorem}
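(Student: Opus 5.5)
The plan is to construct the independent scalar white noises $W_n$ directly from $M$ and then show that $M$ is recovered by the series \eqref{eqDefWN}. The hypothesis $\Gamma_M = Q(\mathrm{Leb}\otimes\lambda)$ means that for all $h,g\in H$, $A,B\in\calA$,
$$
\quadraVari{M(A)h,M(B)g}_t = t\,\lambda(A\cap B)\inner{Qh}{g}_H .
$$
For $A\cap B=\emptyset$ this is orthogonality. For general $A,B$ it follows by splitting $A=(A\setminus B)\cup(A\cap B)$ and using finite additivity of $M(t,\cdot)$.

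For each $n$ with $\lambda_n>0$ we set
$$
W_n(t,A):=\lambda_n^{-1/2}M(t,A)h_n .
$$
For $n$ with $\lambda_n=0$ we take $W_n$ to be an independent white noise on an enlarged probability space. By \eqref{representationCovariaQDefiCylinWhineNoiseMeasure} we have $\inner{Qh_n}{h_m}=\lambda_n\delta_{nm}$. Hence
$$
\quadraVari{W_n(A),W_m(B)}_t=\delta_{nm}\,t\,\lambda(A\cap B).
$$
The key step is a multidimensional L\'evy characterization. Fix finitely many indices $n_1,\dots,n_k$ and disjoint sets $A_1,\dots,A_j\in\calA$. The $\R^{kj}$-valued process $(W_{n_i}(t,A_l))_{i,l}$ is a continuous square integrable martingale, since $M$ has continuous paths, and its predictable covariation is $t\,\mathrm{diag}(\lambda(A_l))$. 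By the classical finite-dimensional L\'evy theorem it is a Gaussian process with independent increments, and its components are independent Brownian motions scaled by $\lambda(A_l)^{1/2}$. Since these finite-dimensional distributions determine the law, the family $(W_n)$ consists of independent white noise measures on $(\R_+\times U,\mathrm{Leb}\otimes\lambda)$ in the sense of the previous subsection. The extra $W_n$ with $\lambda_n=0$ are independent of everything and do not contribute. It remains to note that $\sigma$-additivity is inherited from property (3) of $M$.

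Next we identify $M$ with the series. Decompose $h = \sum_n\inner{h_n}{h}h_n + h^\perp$, where $h^\perp\perp \overline{\mathrm{span}}\{h_n\}$. Then $\inner{Qh^\perp}{h^\perp}=0$, so $\quadraVari{M(A)h^\perp}_t=0$, and since $M(A)h^\perp$ is a continuous martingale starting at $0$ we get $M(t,A)h^\perp=0$ a.s. Similarly, the increments $M(A)h_n$ with $\lambda_n=0$ vanish. For the remaining part we use linearity and the continuity of $M(t,A):H\to L^0$. Actually the $L^2$ bound
$$
\Exp|M(t,A)g|^2 = t\lambda(A)\inner{Qg}{g}\le t\lambda(A)\norm{Q}\norm{g}^2
$$
is stronger than needed. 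Applying it to the partial sums $g_N=\sum_{n\le N}\inner{h_n}{h}h_n$ gives
$$
M(t,A)h=\lim_N\sum_{n\le N}\lambda_n^{1/2}W_n(t,A)\inner{h_n}{h}
$$
in $L^2$, which is exactly \eqref{eqDefWN}. Continuous versions in $t$ then give equality as processes.

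The main obstacle is the white-noise step. The paper's definition of white noise measure requires joint Gaussianity and independence across the countable family, across disjoint sets, and across time increments. This is obtained by applying the finite-dimensional L\'evy theorem to arbitrary finite subfamilies and passing to the countable family via Kolmogorov consistency and the $\pi$-$\lambda$ theorem. Care is also needed with sets $A\in\calA$ that are not disjoint, which we handle by refining to a finite disjoint partition generated in the ring $\calA$.
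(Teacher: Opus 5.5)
Your route is the paper's: you use the same $W_n=\lambda_n^{-1/2}M(\cdot)h_n$, you kill the complement of $\overline{\mathrm{span}}\{h_n\}$ through its vanishing quadratic variation, and you finish with a L\'evy-type characterization. Your execution is more careful than the paper's in three places. You treat the indices with $\lambda_n=0$. You obtain joint Gaussianity from the multidimensional L\'evy theorem, whereas the paper infers independence from pairwise orthogonality of merely marginally Gaussian variables. You justify the series through the $L^2$ bound.

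The step that does not go through is your first one: ``for $A\cap B=\emptyset$ this is orthogonality''. Orthogonality as defined in Section~\ref{sectCMVM} only gives $\quadraVari{M(A)x^*,M(B)x^*}_t=0$ with the \emph{same} vector in both slots. Polarizing tells you only that $(h,g)\mapsto\quadraVari{M(A)h,M(B)g}_t$ is antisymmetric for disjoint $A,B$. The hypothesis on $\Gamma_M$ does not help either: it is a measure in a single set variable, so it only sees the symmetric part of this form. Hence neither hypothesis controls the off-diagonal entries $\quadraVari{W_n(A_l),W_m(A_{l'})}_t$ with $n\neq m$, $l\neq l'$, which your multidimensional L\'evy step needs to vanish.

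These entries can in fact be nonzero, and then the conclusion fails. Take $U=\{a,b\}$ with $\lambda$ the counting measure, and $Q=I$ with an orthonormal basis $(e_k)$. Take independent Brownian motions $(\beta^k)$, $(\gamma^k)$, fix $c\in(0,1)$ and set $s=\sqrt{1-c^2}$. Put $M(t,\{a\})h=\sum_k\inner{h}{e_k}_H\beta^k_t$ and $M(t,\{b\})h=\sum_k\inner{h}{e_k}_H\tilde\gamma^k_t$, where $\tilde\gamma^1=s\gamma^1-c\beta^2$, $\tilde\gamma^2=s\gamma^2+c\beta^1$, and $\tilde\gamma^k=\gamma^k$ for $k\geq 3$; extend additively to $2^U$.

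Both $M(\{a\})$ and $M(\{b\})$ are standard cylindrical Wiener processes, and
$\quadraVari{M(\{a\})h,M(\{b\})h}_t=ct\,(\inner{h}{e_1}_H\inner{h}{e_2}_H-\inner{h}{e_2}_H\inner{h}{e_1}_H)=0$.
So $M$ is orthogonal in the paper's sense, with $\Gamma_M=I\,(\mathrm{Leb}\otimes\lambda)$. Yet $\Exp[M(t,\{a\})e_1\,M(t,\{b\})e_2]=ct\neq0$, whereas $\Exp[W(t,A)h\,W(t,B)g]=0$ for every cylindrical $Q$-white noise measure and all disjoint $A,B$.

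So you must add the hypothesis $\quadraVari{M(A)h,M(B)g}_t=0$ for all $h,g\in H$ and all disjoint $A,B\in\calA$, i.e.\ orthogonality with two different vectors. The paper's proof also uses this tacitly: it only checks $\quadraVari{W_m(A),W_n(A)}_t=0$ for a common set $A$, which is not enough for independence of $W_m$ and $W_n$ as measures. Once this assumption is stated, your argument is complete.
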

\begin{proof}
    We complete the orthonormal family $\{ h_n \}$ to form a base $\{ h_n \} \cup \{ k_n \}$, then for any $h \in H$, $t \geq 0$, $A \in A$ we have
\begin{align*}
    M(t,A)h &= M(t, A) \left( \sum_{n =1}^{\infty} (h_n, h)_H h_n +  \sum_{n =1}^{\infty} (k_n, h)_H k_n \right) \\
    & = \sum_{n =1}^{\infty} (h_n, h)_H M(t, A)h_n + \sum_{n =1}^{\infty} (k_n, h)_H M(t, A)k_n \\
    & = \sum_{n =1}^{\infty} \lambda_n^{\frac{1}{2}}(h_n, h)_H [\lambda_n^{-\frac{1}{2}} M(t, A)h_n ].
\end{align*}
The second sum in the right hand side is zero, since its quadratic variation is zero. In fact,
\begin{align*}
    \quadraVari{M(A)k_n}_t & = \nu_{k_n}([0,t] \times A) = \alpha_M ([0,t] \times A) (k_n, k_n) \\ 
    &= \inner{\Gamma_M ([0,t] \times A)k_n}{k_n}_H  = t \lambda(A) \inner{Q k_n}{k_n}_H = 0.
\end{align*}

Let $W_n(t, A) = \lambda_n^{-\frac{1}{2}} M(t, A)h_n$.  We must prove that the $W_n$ are independent white noise measures, each with intensitiy measure $\nu_n = \textup{Leb} \otimes \lambda$.  With the same computations as above, we have
\begin{align*}
    \quadraVari{ W_n(A) }_t & = \quadraVari{ \lambda_n^{-\frac{1}{2}} M(A)h_n }_t = \lambda_n^{-1} \quadraVari{ M(A)h_n }_t \\
    & = \lambda_n^{-1} \alpha_M ([0,t] \times A) (h_n, h_n) = \lambda_n^{-1} t \lambda(A) \inner{Qh_n}{h_n}_H \\
    & = \lambda_n^{-1} t \lambda(A) \lambda_n = t \lambda(A)
\end{align*}

By Proposition II-3 in \cite{KarouiMeleard:1990} we have that each $W_n$ is a white noise with intensity $\textup{Leb} \otimes \lambda$.  To prove independence, it is enough to show that the $W_n$ are orthogonal. We have

\begin{align*}
    \langle  W_m(A) , W_n(A) \rangle_t & = \langle  \lambda_m^{-\frac{1}{2}} M(t, A)h_m , \lambda_n^{-\frac{1}{2}} M(t, A)h_n \rangle_t \\
    & = \lambda_m^{-1}\lambda_n^{-1} \langle   M(t, A)h_m ,  M(t, A)h_n \rangle_t \\
    & = \lambda_m^{-1}\lambda_n^{-1} \alpha_M([0,t]\times A)(h_m, h_n) \\
    & = \lambda_m^{-1}\lambda_n^{-1} t \lambda(A) \inner{Qh_m}{h_n}_H = 0.
\end{align*}
    Which proves orthogonality and thus, independence, since each $W_{n}(t,A)$ is Gaussian.
\end{proof}

\subsection{The martingale representation theorem}\label{subsectMartingaleRepresentation}

%At a difference of Theorem \ref{martingalerepresentationtheorem}, in this case  

In this section we prove the following version of the martingale representation theorem. Our objective is to show that a C.M.V.M. with continuous paths with some given operator-valued quadratic variation of an integral form, can be expressed as the stochastic integral with respect to a cylindrical white noise measure. Let $H$ be a separable Hilbert space and $X$ be a separable reflexive Banach space. Denote by $\mathcal{K}^{+}(X)$ the collection of all the non-negative symmetric compact operators from $X^*$ to $X$. Notice the particular case of a Hilbert space $H$, in which $\mathcal{K}^{+}(H)$ is the space of non-negative, self-adjoint compact operators.

\begin{theorem}
[Martingale representation theorem]
\label{martingalerepresentationtheoremContinuousCase}
Let $M=(M(t, A): t \geq 0, A \in \calA)$ be an orthogonal C.M.V.M. on $X^*$ with continuous paths and which possesses a unique quadratic variation.  
Assume that the operator-valued quadratic variation of $M$ is given by 
\begin{equation}\label{eqOperatorQuaVariMRepresTheoremContinuous}
 \Gamma_{M}([0,t] \times A) = \int_0^t \int_A\; \Phi (r,u) Q\Phi^*(r,u)\; \lambda(du) dr, 
\end{equation}
where $Q$ is a bounded, non-negative, self-adjoint operator on $H$ with  representation  \eqref{representationCovariaQDefiCylinWhineNoiseMeasure}, $\lambda$ is a $\sigma$-finite measure on $U$, which is finite on the sets in $\mathcal{A}$, and $\Phi: \Omega \times[0,T] \times U \rightarrow \mathcal{L}(H,X)$ is $ (\omega,t,u) \mapsto \Phi(\omega,t,u)h$ 
is $\mathcal{P}_{T}\otimes \mathcal{B}(U)/\mathcal{B}(X)$-measurable for every $h \in H$ and such that for $\Psi(\omega,t,u)=\Phi(\omega,t,u) Q^{1/2}$, we have $\Psi(\omega,t,u) \Psi^{*}(\omega,t,u) \in \mathcal{K}^{+}(X)$.

Then there exists a 
cylindrical $Q$-white noise measure $W=(W(t, A): t \geq 0, A \in \calA)$ on $H$ based on $\left(\mathbb{R}_{+} \times U, \mathcal{B}\left(\mathbb{R}_{+}\right) \otimes \mathcal{B}(U),Leb \otimes \lambda\right)$  defined with respect to an extended probability space $(\Omega \times \widetilde{\Omega}, \mathcal{F} \times \widetilde{\mathcal{F}}, \Prob \times \widetilde{\Prob})$ adapted to the filtration $\{\mathcal{F}_{t} \times \widetilde{\mathcal{F}}_{t}\}$, such that 
 $$
M(t,A)(\omega)= \int_0^t\int_A \Phi(\omega,r,u)W(dr,du)(\omega, \widetilde{\omega}),
$$ 
where the last equality must be interpreted as cylindrical random variables.
\end{theorem}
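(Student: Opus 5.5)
We follow the classical finite-dimensional route (as in Da Prato--Zabczyk, Theorem 8.2): build a ``pseudo-inverse'' integrand, integrate it against $M$, and correct the degenerate directions with an independent noise.

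\textbf{Step 1: diagonalize the covariance.} Put $\Psi=\Phi Q^{1/2}$ and $K(\omega,r,u)=\Psi\Psi^*\in\mathcal{K}^{+}(X)$. Since $X$ is reflexive, factor $K=JJ^*$ through the reproducing kernel Hilbert space $H_K$ (completion of $K(X^*)$). Because $K$ is compact, $H_K$ is separable and $J\colon H_K\to X$ is compact. Then $\Psi^*$ induces a partial isometry from $H_K$ onto $\overline{\operatorname{ran}\Psi^*}\subset H$. The compactness assumption is used here to get a predictable selection: choose, by a measurable spectral decomposition of the compact positive operator $\Psi^*(\cdot)^{*}\!\!$-type operator $\Psi^*J^{-1}$, a predictable family $P(\omega,r,u)$ (the orthogonal projection of $H$ onto $\overline{\operatorname{ran}\Psi^*}$). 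Choose also a predictable family $\Theta(\omega,r,u)\in\calL(X,H)$, at least defined on a dense predictable countable set of directions, with
\[
\Theta\Phi Q^{1/2}=P .
\]
That is, $\Theta$ is a pseudo-inverse of $\Phi$ restricted to the relevant range. Predictability is handled through the countable families $x_k^*\in X^*$ and $h_n$, using separability of $X^*$.

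\textbf{Step 2: construct $W$.} On $\widetilde\Omega$ take an independent cylindrical $Q$-white noise $\widetilde W$, which is available from \eqref{eqDefWN}. Define
\[
W(t,A)=\int_0^t\!\!\int_A \Theta\,dM+\int_0^t\!\!\int_A (I-\widetilde P)\,d\widetilde W ,
\]
where $\widetilde P$ is the appropriate projection written in the coordinates $Q^{1/2}$, so that the two pieces cover complementary subspaces. Both integrals are non-radonifying integrals in the sense of Section \ref{sectStochIntegAsACylMVM}. Membership of the integrands in $\Lambda$ follows from the bound $\rho_{M,A}(\Theta)^2=\sup\Exp\int\langle\Theta\Phi Q\Phi^*\Theta^* h,h\rangle\leq \lambda(A)T\norm{Q}$, because the integrand of the first piece is controlled by $\langle PQ^{1/2}h,\cdots\rangle$.

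We then compute the operator quadratic variation using Theorem \ref{theoDensities}, polarization, and independence, which kills the cross term. The result is
\[
\Gamma_W=Q\,(\mathrm{Leb}\otimes\lambda).
\]
Paths are continuous since $M$ and $\widetilde W$ have continuous paths, and $W$ is orthogonal by Theorem \ref{theoExistenceStochIntegral}. Theorem \ref{theoLevyCharacterization} then shows that $W$ is a cylindrical $Q$-white noise measure with respect to $\{\calF_t\times\widetilde\calF_t\}$.

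\textbf{Step 3: recover $M$.} By associativity (Theorem \ref{theoAssociatIntegral}),
\[
\int\Phi\,dW=\int\Phi\Theta\,dM+\int\Phi(I-\widetilde P)\,d\widetilde W .
\]
The second term vanishes: its quadratic variation is governed by
\[
\Phi(I-\widetilde P)Q(I-\widetilde P)\Phi^*=0,
\]
since $I-\widetilde P$ projects onto $\ker\Psi$. For the first term, compute the covariance of $M-\int\Phi\Theta\,dM$ from \eqref{eqOperatorQuaVariMRepresTheoremContinuous}. We have
\[
\Phi\Theta\Phi Q\Phi^*\Theta^*\Phi^*=\Phi Q\Phi^*,
\]
and the bracket of $M$ with $\int\Phi\Theta\,dM$ equals
\[
\int \Phi\Theta\,\Phi Q\Phi^* =\int\Phi Q\Phi^* .
\]
Hence the difference has zero quadratic variation for every $x^*$, so it is zero, and the equality holds as cylindrical random variables.

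\textbf{Main obstacle.} The hardest part is Step 1: producing $\Theta$ (equivalently, the projection $P$) as a predictable, genuinely $\Lambda$-integrable family when $\Phi$ is not injective and has non-closed range. The plan for this is to use the compactness of $\Psi\Psi^*$ to obtain a measurable eigen-expansion $\Psi\Psi^*=\sum_k \sigma_k\, e_k\otimes e_k$ in the RKHS. On that expansion we define $\Theta$ only on the span of the eigenvectors with $\sigma_k>0$, truncate to $\sigma_k>1/m$, and pass to the limit $m\to\infty$ in $d_M$. Convergence is guaranteed by the uniform bound $\rho_{M,A}\leq (T\lambda(A)\norm{Q})^{1/2}$ together with monotonicity of the truncated projections.
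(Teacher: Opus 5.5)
Your overall architecture matches the paper's: whiten $M$, fill the degenerate directions with independent noise, identify $W$ by Theorem \ref{theoLevyCharacterization}, and recover $M$ by associativity. However, the step that carries the whole difficulty is not done. Write $\Psi\Psi^*=\sum_k\sigma_k\varphi_k\otimes\varphi_k$ and $\Psi^*\varphi_k=\sigma_k^{1/2}e_k$. Since $\Psi\Psi^*$ is compact, $\Psi$ is compact. Any $\Theta$ with $\Theta\Psi=P$ must then satisfy $\Theta\varphi_k=\sigma_k^{-1/2}e_k$ with $\sigma_k\to 0$, so $\Theta$ is unbounded as soon as $\Psi$ has infinite rank. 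The corrected normalization below gives the same behaviour, e.g.\ for $Q=I$, which the hypotheses allow.

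Consequently $\Theta\notin L(M,X,H)$, and $\int\Theta\,dM$ is not defined by Section \ref{sectStochIntegAsACylMVM}. Theorem \ref{theoAssociatIntegral}, on which your Step 3 relies, is therefore unavailable. Your truncation remedy does not repair this. For $Q=I$,
\[
\rho_{M,A}(\Theta_{m'}-\Theta_m)^2=\sup_{\|h\|\le 1}\Exp\int_{[0,T]\times A}\|(P_{m'}-P_m)h\|^2\,dr\,\lambda(du),
\]
and the supremum over the unit ball sits outside the integral, so the strong convergence $P_m\uparrow P$ gives nothing. For a deterministic $\Psi$ with infinitely many distinct singular values, this quantity equals $T\lambda(A)$ for infinitely many pairs $m<m'$, so $(\Theta_m)$ is not $d_M$-Cauchy. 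A uniform bound on $\rho_{M,A}(\Theta_m)$ together with monotone projections is not convergence in $d_M$.

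For each fixed $h$ the integrals $(\int\Theta_m\,dM)(t,A)h$ do converge in $\calM^2$, so $W$ could be defined as a strong limit. But then orthogonality, $\Gamma_W$, and above all $\int\Phi\,dW=M$ would have to be proved by hand, since associativity is only established for integrands in $\Lambda$.

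The paper's proof is designed to avoid integrating an unbounded operator. In the Hilbert case it diagonalizes $\Psi\Psi^*=\sum_n\lambda_n\varphi_n\otimes\varphi_n$ predictably (Proposition \ref{propCanonicalDecompoRandomCompact}). It then integrates only the bounded $V^*$ against $M$, giving $N=\sum_n\eta_nh_n$ with scalar $\eta_n=\int\varphi_n\,dM$. The unbounded renormalization is done coordinatewise through the scalar integrals $Y_n=\int\delta_n\,d\eta_n+\int\gamma_n\,dw_n$, which are square integrable because $\delta_n^2\lambda_n\le 1$. The $I$-white noise $\widehat W=\sum_nY_nh_n$ is assembled through the series definition, not as an operator integral. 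To pass from $\widehat\Psi=V\Lambda^{1/2}$ to $\Psi$, the paper uses a bounded partial isometry $J$ with $\widehat\Psi=\Psi J$ (available since $\widehat\Psi\widehat\Psi^*=\Psi\Psi^*$) instead of inverting $\Psi$. It then sets $W=\int Q^{1/2}J\,d\widehat W+\int Q^{1/2}K\,d\widetilde W$, where every integrand is bounded.

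Two further points in your sketch need fixing.

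\emph{Normalization.} Your normalization is off. With $\Theta\Psi=P$ and $\widetilde W$ a $Q$-white noise, the first piece has covariance density $P$, and $\Phi\Theta\Psi=\Phi P\neq\Psi$. So neither $\Gamma_W=Q\,(\mathrm{Leb}\otimes\lambda)$ nor your identity $\Phi\Theta\Phi Q\Phi^*\Theta^*\Phi^*=\Phi Q\Phi^*$ holds; for $H=X=\R$, $\Phi=1$, $Q=2$ you get $W=M/\sqrt{2}$. You need $\Theta\Psi=Q^{1/2}P$, with $\widetilde W$ an $I$-white noise and correction integrand $Q^{1/2}(I-P)$. These play exactly the roles of $Q^{1/2}J$ and $Q^{1/2}K$ in the paper.

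\emph{Banach case.} For a general reflexive $X$ you would need a predictable eigen-expansion in the $(\omega,r,u)$-dependent RKHS of $\Psi\Psi^*$. Proposition \ref{propCanonicalDecompoRandomCompact}, which works in a fixed Hilbert space, does not provide this. The paper instead proves the Hilbert case first and transfers it through the embedding $j\colon X\to G_X$ and the density of $j^*(G_X)$ in $X^*$.
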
 

\begin{remark}\label{remaTraceClassQ} \hfill 
\begin{enumerate}
    \item If $Q \in \mathcal{K}^{+}(H)$, then $Q$ has a representation of the form \eqref{representationCovariaQDefiCylinWhineNoiseMeasure} with $\lambda_{n} \searrow 0$ and where the sum converges in the operator norm. In such case by the ideal property of compact operators we have  $\Psi(\omega,t,u) \Psi^{*}(\omega,t,u) \in \mathcal{K}^{+}(X)$. 
    \item If $Q \in \mathcal{L}^{+}_{1}(H)$ (in particular this implies $Q \in \mathcal{K}^{+}(H)$), by Remark \ref{remarkQtraceclass}, $W$ can be chosen to be an $H$-valued white noise measure.
    %\item {\color{red} Explorar m\'as opciones de radonificaci\'on}
\end{enumerate}
\end{remark}

\begin{proof}  We separate the proof in two steps. 

\emph{Step 1:
Assume $X=G$, where $G$ is a separable Hilbert space.}

First, observe that $\Psi\Psi^*$ satisfies all the conditions in Proposition \ref{propCanonicalDecompoRandomCompact}, hence has the representation  
\begin{equation}\label{eqSpectralDecompoPsiAndAdjointPsiContinuous}
    \Psi(\omega,r,u)\Psi^*(\omega,r,u)g = \sum_{n=1}^\infty \lambda_n(\omega,r,u) \inner{\varphi_n(\omega,r,u)}{g}_G\varphi_n(\omega,r,u)
\end{equation}
    where $\lambda_{n}$ and $\varphi_{n}$ satisfy \eqref{eqNormFunctionsG} and 
\eqref{eqOrthogonalFunctionsG}. 
%Moreover, as part of the definition of $\lambda_{1}$ and $g_{1}$ recall that 
% \begin{equation}
% \norm{\Phi(r,u)Q\Phi^*(r,u)}_{\calL(G)}=\norm{\Psi\Psi^*}_{\calL(G)} = \lambda_1(\omega,t,u).
% \end{equation}
    Let $(h_n)_{n\in\N}$ be a ONB in $H$ which extends the orthonormal system associated to $Q$ in the representation \eqref{representationCovariaQDefiCylinWhineNoiseMeasure}. Define $V(\omega,r,u):H\fle G$ by
    \begin{equation*}\label{eqOperatorVRepresentationTheoremContinuous}
    V(\omega,r,u)h = \sum_{n=1}^\infty \inner{h}{h_n}_H\varphi_n(\omega,r,u).
    \end{equation*}

    Notice that
    $$
    V^*(\omega,r,u)g = \sum_{n=1}^\infty \inner{\varphi_n(\omega,r,u)}{g}_G h_n
    $$
and $\norm{V(\omega,r,u)} = \norm{V^*(\omega,r,u)} = 1$.
    The $\mathcal{P}_T\otimes \calB(U)/\calB(G)$-measurable process
    $$
    \Pi(\omega,r,u) = V(\omega,r,u)V^*(\omega,r,u)
    $$
    is an orthogonal projection on $\mbox{Im}(\Psi(\omega,r,u)\Psi^*(\omega,r,u))$. Therefore
    \begin{eqnarray*}
        M(t,A) &=& \int_0^t\int_A \left(\Pi(\omega,r,u)+\Pi^{\perp}(\omega,r,u)\right) M(dr,du)\\
        &=& \int_0^t\int_A \Pi(\omega,r,u) M(dr,du) + \int_0^t\int_A \Pi^{\perp}(\omega,r,u)  M(dr,du)\\
        &=& M_1(t,A)+M_2(t,A).
    \end{eqnarray*}
Observe that the above stochastic integrals exist according to Theorem \ref{theoCondL2LocInteg}. Moreover,
    $$
     \Gamma_{M_{2}}([0,t] \times A) = \int_0^t\int_A \Pi^{\perp}(r,u)\Psi(r,u)\Psi^*(r,u)\Pi^{\perp}(r,u)\lambda(dr,du) = 0
    $$
    and consequently, thanks to the associativity of the integral (Theorem \ref{theoAssociatIntegral}), and Theorem \ref{theoCondL2LocInteg} (for the existence of the stochastic integrals),
\begin{equation*}\label{eqMAsIntegralOfNContinuous}
        M(t,A) = \int_0^t\int_A V(r,u) N(dr,du),
    \end{equation*}
    where
    \begin{equation}
    \label{eqMAsIntegralOfMContinuous}
    N(t,A):=\int_0^t\int_A V^*(r,u)M(dr,du)
    \end{equation}
    is an orthogonal CMVM on $H$ (notice that we cannot have a radonified integral here), whose operator-valued quadratic variation is given by
    $$
    \Gamma_{N}((s,t]\times A) = \int_0^t\int_A V^*(r,u)\Psi(r,u)\Psi^*(r,u)V(r,u)\lambda(dr,du).
    $$

Then, for every $h \in H$ we have  
 \begin{eqnarray*}
N(t,A)h & = & \int_{0}^{t} \int_A \quadraVari{V^*(r,u),h} M(dt,du) \\
& = &  \sum_{n=1}^\infty (h,h_n) \int_0^t\int_A \varphi_n(r,u)M(dr,du) \\
& = & \sum_{n=1}^\infty (h,h_n)\eta_n(t,A)
\end{eqnarray*}
   where for each $n \in \N$,  
\begin{equation}
\label{etaprocessdefin}
\eta_n(t,A) = \int_0^t\int_A \varphi_n(r,u)M(dr,du).
\end{equation}   
   is a real-valued  orthogonal martingale-valued measure (in the sense of \cite{Walsh:1986}). 
Now, we can write 
\begin{equation}\label{eqMeasureNAsRandomFourier}
        N(t,A) = \sum_{n=1}^\infty \eta_n(t,A)h_n 
\end{equation}
    as a random series of orthogonal MVM. Observe that by \eqref{eqMeasureNAsRandomFourier}, the decomposition of $\Psi \Psi^{*}$ in \eqref{eqSpectralDecompoPsiAndAdjointPsiContinuous} and \eqref{eqOperatorQuaVariMRepresTheoremContinuous}, we have
 \begin{eqnarray*}
 \Exp \left[ \abs{\eta_{n}(t,A)}^{2} \right]
 & = &  \Exp \left[ \abs{N(t,A)h_{n}}^{2} \right] \\
& = &  \Exp \left[ \abs{ \int_{0}^{t} \int_{A} \langle V^{*}(r,u), h_{n} \rangle M(dr,du) }^{2} \right] \\
& = &  \Exp \left[ \abs{ \int_{0}^{t} \int_{A} \inner{\varphi_{n}(r,u)}{\cdot}_{G} M(dr,du) }^{2} \right] \\
& = &  \Exp \int_{0}^{t} \int_{A} \lambda_{n}(r,u) \lambda(dr,du) 
\end{eqnarray*}
Therefore, $\eta_n$ has an intensity measure given by 
\begin{equation}\label{eqIntensityMeasureOfEtaN}
    \nu_{N,n}(t,A) = \int_0^t\int_A  \lambda_{n}(r,u) \lambda(dr,du).
   \end{equation}

Let 
\begin{equation}
\label{deltagammadefin}
   \delta_{n}(\omega,r,u) =
\begin{cases}
\lambda_{n}^{-1/2}(\omega,r,u), & \mbox{ if } \lambda_{n}(\omega,r,u)>0, \\
0, & \mbox{ if } \lambda_{n}(\omega,r,u)=0, \\
\end{cases}  \quad \gamma_{n}(\omega,r,u) =
\begin{cases}
0, & \mbox{ if } \lambda_{n}(\omega,r,u)>0, \\
1, & \mbox{ if } \lambda_{n}(\omega,r,u)=0. \\
\end{cases}
\end{equation}

We define now
$$
Y_n(t,A)=\int_0^t \int_A \delta_n(s,u) \, \eta_n(ds, du)+\int_0^t \int_A \gamma_n (s,u)\, w_n(ds,du),
$$
where $w_n$ are independent white noise measures on $\left(\mathbb{R}_{+} \times U, \mathcal{B}\left(\mathbb{R}_{+}\right) \otimes \mathcal{B}(U),Leb \otimes \lambda\right)$ defined on a new probability space $(\Omega', \mathcal{F}', \{\mathcal{F}_t'\}_{t \geq 0}, \Prob')$ and $\eta_n$ is defined in (\ref{etaprocessdefin}). 
We extend processes defined on $\Omega$ or $\Omega'$ to the product space $\Omega \times \Omega'$, for example by $M(\omega, \omega', t,  A) = M(\omega, t, A)$.
Then, if we use (\ref{eqQuadraVariaCylindricalWhiteNoise}),  (\ref{eqIntensityMeasureOfEtaN}) and (\ref{deltagammadefin}), we have that for each $A \in \mathcal{A}$ one has that
$$
\quadraVari{Y_n(A), Y_m(A)}_t=\delta_{m,n}\,t\, \lambda(A), \quad t \geq 0.
$$
Observe that by the above calculation and Proposition II-3 in \cite{KarouiMeleard:1990} we have that the $Y_{n}$ are orthogonal (hence independent) white noise measures on $\left(\mathbb{R}_{+} \times U, \mathcal{B}\left(\mathbb{R}_{+}\right) \otimes \mathcal{B}(U),Leb \otimes \lambda\right)$.

Now, for each $k \in H$, we consider the sum
$$\widehat{W}(t,A)k=\sum_{n=1}^\infty Y_n(t,A) (h_n,k),$$
which is a cylindrical $I$-white noise measure in $H$ on $\left(\mathbb{R}_{+} \times U, \mathcal{B}\left(\mathbb{R}_{+}\right) \otimes \mathcal{B}(U),Leb \otimes \lambda\right)$. Note that
$$
\int_0^t \int_A \lambda_n^{1/2}(r,u) Y_n(dr,du)=\int_0^t \int_A \lambda_n^{1/2}(r,u) \delta_n(r,u) \eta_n(dr,du)=\eta_n(t,A).
$$
Hence, if we define
\begin{equation*}
 \Lambda^{1/2}(\omega,r,u)h = \sum_{n=1}^{\infty} \lambda_{n}^{1/2}(\omega,r,u)\inner{h_n}{h}_{H} h_{n},    
\end{equation*}
then we have
\begin{eqnarray*}
N(t,A) &=& \sum_{n=1}^\infty \eta_n(t,A)h_n=\sum_{n=1}^\infty \int_0^t \int_A \lambda_n^{1/2}(r,u)\, Y_n(dr,du)\\
&=& \int_0^t \int_A \Lambda^{1/2} (r,u) \widehat{W}(dr,du),
\end{eqnarray*}
Thus, it follows that 
\begin{equation}
\label{CMVMhatWequality}
M(t,A)=\int_0^t \int_A V(r,u)\, N(dr,du)=\int_0^t \int_A V(r,u)\Lambda^{1/2}(r,u) \widehat{W}(dr,du),
\end{equation}
with the help of \eqref{eqMAsIntegralOfNContinuous}. Now, if we consider another filtered probability space $(\Omega'', \mathcal{F}'', \{\mathcal{F}_t''\}_{t \geq 0}, \Prob'')$ and $\widetilde{W}$ a cylindrical $I$-white noise measure in $H$ on $\left(\mathbb{R}_{+} \times U, \mathcal{B}\left(\mathbb{R}_{+}\right) \otimes \mathcal{B}(U),Leb \otimes \lambda\right)$. We extend the processes to the product space $\Omega\times \Omega' \times \Omega''$, for example by $M(\omega, \omega', \omega'', t,  A) = M(\omega, \omega', t, A) = M(\omega, t, A)$. 
We define
$$
W(t,A)=\int_0^t \int_A Q^{1/2}(r,u) J(r,u)\, \widehat{W}(dr,du)+\int_0^t \int_A Q^{1/2}(r,u) K(r,u)\, \widetilde{W}(dr,du),
$$
with $K(\omega,r,u)=(J(\omega,r,u) J^{\ast}(\omega,r,u))^{\perp}$, where $J(\omega,r,u):(\mbox{ker} \widehat{\Psi}(\omega,r,u) )^{\perp} \rightarrow (\mbox{ker} \Psi(\omega,r,u) )^{\perp}$ is a $\mathcal{P}_T \otimes \calB(U)$-measurable process, which is obtained as an application of Lemma 2.10 in \cite{GawaMand:2010} such that 
\begin{equation}
\label{HatPsiJprocessequality}
 \widehat{\Psi}(\omega,r,u) = \Psi(\omega,r,u)J(\omega,r,u),
 \end{equation}
and such that $J(\omega,r,u) J^{\ast}(\omega,r,u)$ is an orthogonal projection onto $(\mbox{ker} \Psi(\omega,r,u) )^{\perp} $.

%(see (\ref{PsihatPsiequality}), (\ref{HatPsiJprocessequality}))
Then,
\begin{eqnarray*}
\Gamma_W \left([0,t] \times A\right)&=&\int_0^t \int_A \Bigl(Q^{1/2} J(r,u) J^{*}(r,u) Q^{1/2}+Q^{1/2}K(r,u)Q^{1/2} \Bigr)\, dr \,\lambda(du)\\
&=& Q\, t\, \lambda(A).
\end{eqnarray*}
 Thus, as an application of Theorem \ref{theoLevyCharacterization}, $W$ is a cylindrical $Q$-white noise measure in $H$ on $\left(\mathbb{R}_{+} \times U, \mathcal{B}\left(\mathbb{R}_{+}\right) \otimes \mathcal{B}(U),Leb \otimes \lambda\right)$. 
 
Finally, by \eqref{eqOperatorQuaVariMRepresTheoremContinuous} and  Theorem \ref{theoCondL2LocInteg} (recall that $U_{n} \in \calA$ for every $n \in \N$) we conclude that $\Phi \in \Lambda(W,H,G)$.
 Furthermore, if we use the fact that $K$ is the projection onto $\mbox{ker} \Psi(\omega,r,u)$ and then by \eqref{HatPsiJprocessequality}, we  conclude that
\begin{eqnarray*}
\int_0^t \int_A \Phi(r,u)\, W(dr,du) &=&  \int_0^t \int_A \Psi(r,u) J(r,u)\, \widehat{W}(dr,du) + \int_0^t \int_A \Psi(r,u) K(r,u)\, \widetilde{W}(dr,du)\\
&=&   \int_0^t \int_A \widehat{\Psi}(r,u) \, \widehat{W}(dr,du)\\
&=& M(t,A),
\end{eqnarray*}
where the last equality is a direct consequence of (\ref{CMVMhatWequality}).

\emph{Step 2: $X$ is a separable, reflexive Banach space}.

%\textbf{Generalization to separable Banach spaces}

 Following the ideas in the proof of Theorem 2 in \cite{Ondrejat:2005}, let $(x_n^* : n \in \mathbb{N})$ be a dense subset in the unit sphere of $X^*$, and $G_X$ be the Hilbert space given by the completion of $X$ using the norm $\| x \|^2_{G_X} = \sum_{n=1}^{\infty} |\inner{x}{x_n^*}|^2$.

The inclusion $j : X \to G_X$ is continuous, and so we can define an orthogonal C.M.V.M. with continuous paths given by 
$$N(t, A)(g) = M(t, A)(j^* g), \qquad \forall \, g \in G_X.$$
It is easy to verify that this process has operator-valued quadratic variation given by 
\begin{equation}\label{eqOperatorQuaVariMRepresTheoremContinuousBanach}
\Gamma_{N}([0,t] \times A) = \int_0^t \int_A\; j \Phi (r,u) Q\Phi^*(r,u) j^*\; \lambda(du) dr. 
\end{equation}
   Observe that $j \Phi (r,u) Q\Phi^*(r,u) j^*\in \mathcal{K}^+(G)$, so we can apply the result in Step 1, obtaining that there is a cylindrical $Q$-white noise measure in $H$ based on $\left(\mathbb{R}_{+} \times U, \mathcal{B}\left(\mathbb{R}_{+}\right) \otimes \mathcal{B}(U),Leb \otimes \lambda\right)$  defined with respect to an extended probability space $(\Omega \times \widetilde{\Omega}, \mathcal{F} \times \widetilde{\mathcal{F}}, \Prob \times \widetilde{\Prob})$ adapted to the filtration $\{\mathcal{F}_{t} \times \widetilde{\mathcal{F}}_{t}\}$, such that 
\begin{eqnarray*}
M(t,A) (j^*g)=N(t,A)(\omega)(g)&=& \int_0^t\int_A j\Phi(\omega,r,u)W(dr,du)(\omega, \widetilde{\omega})(g)\\
&=&\int_0^t \int_A \Phi(\omega,r,u)W(dr,du)(\omega, \widetilde{\omega})(j^*g).
\end{eqnarray*}
Since $j$ is injective, then $\overline{j^*(G_X)}=X^*$ we conclude that
$$
M(t,A)(x^*)=\int_0^t \int_A \Phi(r,u)W(dr,du) \,(x^*), \quad \forall x^* \in X^*,
$$
which finalizes the proof.
\end{proof}

\section{Special cases for the representation theorem}\label{sectSpecialMartingaleRepre}

\subsection{Hilbert-space valued martingale representation}

If we assume that $M=(M(t, A): t \geq 0, A \in \calA)$ is $G$-valued for a separable Hilbert space $G$, we can  strengthen  the conclusion of Theorem \ref{martingalerepresentationtheoremContinuousCase} to obtain a classical version of the martingale representation theorem. Let $(g_{n}: n\in \N)$ be an orthonormal basis in $G$. Because $M$ is square-integrable, for any given $t >0$ and $A \in \calA$ by \eqref{eqOperatorQuaVariMRepresTheoremContinuous}  we have 
\begin{eqnarray*}
\Exp \left[ \norm{M(t,A)}_{G}^{2} \right]   & = & \sum_{n=1}^{\infty} \Exp \left[ 
\quadraVari{ \inner{M(t,A)}{g_{n}}_{G}}  \right] \\
& = & \sum_{n=1}^{\infty} \Exp
\int_0^t \int_A\; \inner{\Phi (r,u) Q\Phi^*(r,u)g_{n}}{g_{n}}_{G} \; \lambda(du) dr \\
& = &  \Exp
\int_0^t \int_A\; \mbox{tr} \left( \Phi (r,u) Q\Phi^*(r,u) \right) \lambda(du) dr. 
\end{eqnarray*}
From the above and a standard monotone class argument, we conclude that for $\Prob \otimes dt\otimes \lambda$-a.e. $(\omega,t,u)$ we have 
\begin{equation}\label{eqOperaQuadVariaIsHilbSchmitInHValuedMVM}
 \Phi (\omega, t,u) Q\Phi^*(\omega, t,u) \in \mathcal{L}^+_{1}(H) \Leftrightarrow \Phi (\omega, t,u) Q^{1/2} \in \mathcal{HS}(H,G) \Leftrightarrow \Phi (\omega, t,u) \in \mathcal{HS}(H_{Q},G).    
\end{equation}   

We are ready to prove the following version of the martingale representation theorem for Hilbert-valued orthogonal martingale-valued measures.

\begin{theorem}
[$G$-valued martingale representation]
\label{theoremClassicalMartingaleRepresentation}
Let $M=(M(t, A): t \geq 0, A \in \calA)$ be a $G$-valued orthogonal M.V.M. with continuous paths and which possesses a unique quadratic variation.  
Assume that the operator-valued quadratic variation of $M$ is given by 
\begin{equation}\label{eqOperatorQuaVariMRepresTheoremContinuousHilbertvalued}
 \Gamma_{M}([0,t] \times A) = \int_0^t \int_A\; \Phi (r,u) Q\Phi^*(r,u)\; \lambda(du) dr, 
\end{equation}
where $Q$ is a bounded, non-negative, self-adjoint operator on $H$ with  representation  \eqref{representationCovariaQDefiCylinWhineNoiseMeasure}, $\lambda$ is a $\sigma$-finite measure on $U$, which is finite on the sets in $\mathcal{A}$, and $\Phi: \Omega \times[0,T] \times U \rightarrow \mathcal{L}(H,G)$ is such that $ (\omega,t,u) \mapsto \Phi(\omega,t,u)h$ 
is $\mathcal{P}_{T}\otimes \mathcal{B}(U)/\mathcal{B}(G)$-measurable for every $h \in H$.
Then $\Phi (\omega, t,u) \in \mathcal{HS}(H_{Q},G)$ and there exists a 
cylindrical $Q$-white noise measure $W=(W(t, A): t \geq 0, A \in \calA)$ on $H$ based on $\left(\mathbb{R}_{+} \times U, \mathcal{B}\left(\mathbb{R}_{+}\right) \otimes \mathcal{B}(U),Leb \otimes \lambda\right)$
in $H$ based on $\left(\mathbb{R}_{+} \times U, \mathcal{B}\left(\mathbb{R}_{+}\right) \otimes \mathcal{B}(U),Leb \otimes \lambda\right)$  defined with respect to an extended probability space $(\Omega \times \widetilde{\Omega}, \mathcal{F} \times \widetilde{\mathcal{F}}, \Prob \times \widetilde{\Prob})$ adapted to the filtration $\{\mathcal{F}_{t} \times \widetilde{\mathcal{F}}_{t}\}$, such that 
 $$
M(t,A)(\omega)= \int_0^t\int_A \Phi(\omega,r,u)W(dr,du)(\omega, \widetilde{\omega}),
$$ 
where the last equality must be interpreted as $G$-valued random variables.
\end{theorem}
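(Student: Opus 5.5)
The proof has three steps: establish that $\Phi$ takes values in $\mathcal{HS}(H_Q,G)$, apply Theorem \ref{martingalerepresentationtheoremContinuousCase} with $X=G$ to get the cylindrical representation, and then radonify that representation using Proposition \ref{Propcompatibilityofintegrals}.

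\emph{Hilbert--Schmidt property.} Since $M(t,A)$ is a genuine $G$-valued square integrable martingale, the trace computation preceding the statement applies: summing $\Exp\langle \inner{M(A)}{g_n}_G\rangle_t$ over an orthonormal basis $(g_n)$ gives
$$\Exp\int_0^t\int_A \mbox{tr}\left(\Phi Q\Phi^*\right)\lambda(du)\,dr=\Exp\norm{M(t,A)}_G^2<\infty.$$
Apply this with $A=U_m$ and $t=T$. Since the trace is finite $\Prob\otimes dt\otimes\lambda$-a.e., the equivalence \eqref{eqOperaQuadVariaIsHilbSchmitInHValuedMVM} gives $\Phi\in\mathcal{HS}(H_Q,G)$ a.e. On the exceptional null set I would redefine $\Phi:=0$; this changes nothing, since the statement only concerns $\Gamma_M$ and integrals. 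As a consequence, $\Psi\Psi^*=\Phi Q\Phi^*$ is trace class, hence compact, so it lies in $\mathcal{K}^+(G)$.

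\emph{Cylindrical representation.} All hypotheses of Theorem \ref{martingalerepresentationtheoremContinuousCase} now hold with $X=G$ (Step 1 of its proof is exactly this Hilbert case). It yields a cylindrical $Q$-white noise measure $W$ on the extended space with
$$M(t,A)g=\int_0^t\int_A\Phi\,dW\,(g)\qquad\text{for every } g\in G.$$

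\emph{Radonification.} Here I would show $\Phi\in\Lambda^2(W,H,G)$. By \eqref{eqOperatorQWofCylindWhiteNoise} and \eqref{eqQuadraVariaCylindricalWhiteNoise}, $Q_W=Q/\norm{Q}$ and $d\mu_W=\norm{Q}\,d\Prob\,dt\,\lambda(du)$. Hence
$$\int\norm{\Phi Q_W^{1/2}}^2_{\mathcal{HS}}\,d\mu_W=\Exp\int\norm{\Phi Q^{1/2}}^2_{\mathcal{HS}}\,dt\,\lambda(du),$$
which is finite on each $U_m$ by the trace identity above. The predictability hypotheses of $\Lambda^2$ are those assumed on $\Phi$. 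If finiteness only holds on each $U_m$, I would work with $\caract_{U_m}\Phi$ for fixed $A\subset U_m$; this suffices because every $A\in\mathcal{A}$ with $\lambda(A)<\infty$ is handled through $A\cap U_m$ and the $L^2$ countable additivity of both sides. Proposition \ref{Propcompatibilityofintegrals} then shows the radonified integral $(R)\int_0^t\int_A\Phi\,dW$ is a $G$-valued martingale whose pairing with each $g$ equals the cylindrical integral. Therefore
$$\inner{M(t,A)}{g}_G=\inner{(R)\int_0^t\int_A\Phi\,dW}{g}_G\quad\text{a.s., for each } g.$$
Taking a countable orthonormal basis of $G$ gives equality of $G$-valued random variables a.s.

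\textbf{Main obstacle.} The delicate point is the local-versus-global integrability in the last step: $\Lambda^2$ in \cite{CCFM:SPDE} requires a finite integral over all of $U$. I expect to resolve this by localizing to $U_m$ as described, and by using continuity of both sides in $t$ to make the identity hold simultaneously for all $t$.
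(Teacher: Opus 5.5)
Your proposal is correct and follows the paper's proof. In both, the trace identity preceding the statement gives $\Phi\in\mathcal{HS}(H_Q,G)$ a.e., hence $\Psi\Psi^*\in\mathcal{K}^+(G)$; Theorem \ref{martingalerepresentationtheoremContinuousCase} with $X=G$ then gives the cylindrical representation, and Proposition \ref{Propcompatibilityofintegrals} radonifies it. Your extra care fills in details the paper leaves implicit: you redefine $\Phi$ on the exceptional null set, and you verify $\Phi\in\Lambda^2(W,H,G)$ via $Q_W=Q/\norm{Q}$ and $d\mu_W=\norm{Q}\,d\Prob\,dt\,\lambda(du)$. The localization is simpler than you suggest, because the trace identity applied to any fixed $A\in\calA$ already gives $\caract_A\Phi\in\Lambda^2(W,H,G)$ directly.
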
 
\begin{proof}
First, by \eqref{eqOperaQuadVariaIsHilbSchmitInHValuedMVM} we have that 
$\Psi(\omega,t,u) \Psi^{*}(\omega,t,u) \in \mathcal{K}^{+}(H)$
for $\Psi(\omega,t,u)=\Phi(\omega,t,u) Q^{1/2}$. Hence all the hypothesis of Theorem \ref{martingalerepresentationtheoremContinuousCase} are satisfied. Therefore there exists a 
cylindrical $Q$-white noise measure in $H$ based on $\left(\mathbb{R}_{+} \times U, \mathcal{B}\left(\mathbb{R}_{+}\right) \otimes \mathcal{B}(U),Leb \otimes \lambda\right)$  defined with respect to an extended probability space $(\Omega \times \widetilde{\Omega}, \mathcal{F} \times \widetilde{\mathcal{F}}, \Prob \times \widetilde{\Prob})$ adapted to the filtration $\{\mathcal{F}_{t} \times \widetilde{\mathcal{F}}_{t}\}$, such that 
 $$
\inner{M(t,A)(\omega)}{g}_G= \int_0^t\int_A \Phi(\omega,r,u)W(dr,du)(\omega, \widetilde{\omega})(g), \quad \forall \, g \in G. 
$$ 
By \eqref{eqOperaQuadVariaIsHilbSchmitInHValuedMVM}, we can assume $\Phi (
\omega,t,u) \in \mathcal{HS}(H_{Q},G)$ then by Proposition \ref{Propcompatibilityofintegrals} we can conclude that the integral is radonified and so 
by \eqref{eqCompatibilityOfIntegrals} 
$$
\inner{M(t,A)(\omega)}{g}_G= \inner{\int_0^t\int_A \Phi(\omega,r,u)W(dr,du)(\omega, \widetilde{\omega})}{g}_G, \quad \forall \, g \in G, 
$$ 
and we obtain the desired result. 
\end{proof}

%\subsection{Example: cylindrical $Q$-Wiener process and cylindrical $Q$-white noise measure}

\subsection{Representation theorem for a cylindrical square integrable martingale}

In \cite{Ondrejat:2005} the author proves a martingale representation theorem for cylindrical continuous square integrable martingales.
In this section we apply our martingale representation theorem for cylindrical martingale-valued measures to obtain an alternative proof for the result in \cite{Ondrejat:2005}.

Our method is based in the choice of a particular form for the Q-cylindrical White noise measure which is introduced in the following example.

\begin{example}\label{examQWhiteNoiseCylindricaWiener}
Let $(b^n)_{n \in \N}$ be a sequence of independent standard Brownian motions. Consider an one point set $U=\{a\}$ y $\mathcal{A}=\mathcal{P}(U)$. Note that for every $n \in \N$, $W_n(t,A)=b_t^n \delta_a(A)$ define a white noise measure on $\left(\mathbb{R}_{+} \times \{a\}, \mathcal{B}\left(\mathbb{R}_{+}\right) \otimes \mathcal{P}(\{a\}),Leb \otimes \delta_a\right)$ and the sequence $(W_n)_{n \in \N}$ is independent. Now, we can define a cylindrical $Q$-white noise measure by
\begin{eqnarray*}
W(t,A) h&=& \sum_{n=1}^\infty \lambda_n^{1/2} W_n(t,A) (h_n,h)_H\\
&=&\sum_{n=1}^\infty \lambda_n^{1/2} b_t^n \delta_a(A) (h_n,h)_H\\
&=&B_t(h) \delta_a(A),
\end{eqnarray*}
where $B_t(h)=\displaystyle \sum_{n=1}^\infty \lambda_n^{1/2} b_t^n (h_n,h)_H$ defines a cylindrical $Q$-Wiener process 
if $Q$ is a trace class operator. 
\end{example}

For the reciprocal of the above example, we gives the following result. 

\begin{proposition}
\label{propcylindricalWienerprocess}
Let $(W(t,A), t \geq 0, \, A \in \mathcal{A})$ be a cylindrical $Q$-white noise measure on $\left(\mathbb{R}_{+} \times \{a\}, \mathcal{B}\left(\mathbb{R}_{+}\right) \otimes \mathcal{P}(\{a\}),Leb \otimes \delta_a\right)$. Then there exists a cylindrical Wiener process $B=(B_t: t \geq 0)$ in $H$ with covariance $Q$ such that
$$W(t,A)=B_t \delta_a(A), \quad \forall A \in \mathcal{P}(\{a\}).$$
\end{proposition}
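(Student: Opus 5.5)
The plan is to define $B$ directly from $W$ by evaluating at the single atom, and then to read off its law from the series representation \eqref{eqDefWN}. Since $\mathcal{A}=\mathcal{P}(\{a\})=\{\emptyset,\{a\}\}$, finite additivity of $W(t,\cdot)h$ in $L^2$ gives $W(t,\emptyset)h=0$ for all $h$. So the only nontrivial value is $W(t,\{a\})$. I will set
$$
B_t(h):=W(t,\{a\})h,\qquad h\in H,\ t\ge 0 .
$$
Then $W(t,A)=B_t\,\delta_a(A)$ holds trivially for both $A=\emptyset$ and $A=\{a\}$. It remains to show that $B$ is a cylindrical Wiener process in $H$ with covariance $Q$.

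First I will check that $B$ is a cylindrical process. Each $B_t$ is linear and continuous from $H$ into $L^0$, because $W(t,\{a\})$ is a cylindrical random variable; this is property (2) of a cylindrical martingale-valued measure.

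Next I will use the defining representation \eqref{eqDefWN}:
$$
B_t(h)=\sum_{n}\lambda_n^{1/2}W_n(t,\{a\})\inner{h_n}{h}_H .
$$
Each $W_n$ is a white noise on $(\R_+\times\{a\},Leb\otimes\delta_a)$. Hence $b^n_t:=W_n(t,\{a\})$ is a continuous centered Gaussian martingale with $\quadraVari{b^n}_t=t\,\delta_a(\{a\})=t$. By Lévy's characterization in one dimension, $b^n$ is a standard Brownian motion. Independence of the $W_n$ gives independence of the $b^n$.

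Consequently, for $h_1,\dots,h_k\in H$ and times $t_1,\dots,t_m$, the vector $(B_{t_j}(h_i))$ is an $L^2$-limit of centered jointly Gaussian vectors, and is therefore centered jointly Gaussian. Its covariance is obtained from independence of the $b^n$:
$$
\Exp[B_t(h)B_s(g)]=(t\wedge s)\sum_n\lambda_n\inner{h_n}{h}\inner{h_n}{g}=(t\wedge s)\inner{Qh}{g}_H,
$$
using \eqref{representationCovariaQDefiCylinWhineNoiseMeasure}. In addition, each $B(h)$ is a continuous martingale with respect to the filtration. The series converges in $\calM^2$ because $\sum_n\lambda_n\inner{h_n}{h}^2<\infty$, and Doob's inequality turns this into uniform-in-time convergence, which preserves continuity. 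Its quadratic variation is $t\inner{Qh}{h}$, as already computed for $\quadraVari{W(A)h}_t$ in Section \ref{subSectCyliWhiteNoise}. Together these are exactly the defining properties of a cylindrical $Q$-Wiener process: $B_t(h)/\inner{Qh}{h}^{1/2}$ is a standard Brownian motion, with the covariance structure above.

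The step I expect to require care is the identification of the exact notion of "cylindrical Wiener process with covariance $Q$" used (e.g.\ Ondrejat's or Riedle's definition). If that definition demands independent increments relative to $(\mathcal{F}_t)$, I would derive this from the martingale property together with the deterministic bracket, via Lévy's characterization applied to finitely many $B(h_i)$ jointly. The cross brackets $\langle B(h_i),B(h_j)\rangle_t=t\inner{Qh_i}{h_j}$ follow by polarization.
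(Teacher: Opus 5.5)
Your proof is correct and follows essentially the paper's route: you identify $b^n_t=W_n(t,\{a\})$ as independent standard Brownian motions and then read off $B_t(h)=\sum_n\lambda_n^{1/2}b^n_t\inner{h_n}{h}_H$ as in Example \ref{examQWhiteNoiseCylindricaWiener}. The only difference is that the paper obtains the Brownian motions from Proposition II-1 of \cite{KarouiMeleard:1990}, whereas you use L\'evy's characterization, and you also verify directly the joint Gaussianity, the covariance $(t\wedge s)\inner{Qh}{g}_H$ and the path continuity, which the paper leaves to that example.
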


\begin{proof}
In view of \eqref{eqDefWN} and of Example \ref{examQWhiteNoiseCylindricaWiener} it suffices to show that for every $n \in \N$ we have $W_n(t,A)=b_t^n \delta_a(A)$ where $(b^n: n \in \N)$  is a sequence of real-valued independent standard Brownian motions.  But this assertion is indeed a direct consequence of 
 Proposition II-1 of \cite{KarouiMeleard:1990}. 
\end{proof}

\begin{corollary}
\label{coroHvaluedWN}
Let $(W(t,A), t \geq 0, \, A \in \mathcal{A})$ be an $H$-valued $Q$-white noise measure on $\left(\mathbb{R}_{+} \times \{a\}, \mathcal{B}\left(\mathbb{R}_{+}\right) \otimes \mathcal{P}(\{a\}),Leb \otimes \delta_a\right)$, where $Q$ is a trace class operator. Then there exists an $H$-valued $Q$-Wiener process $B=(B_t: t \geq 0)$ in $H$ such that
$$W(t,A)=B_t \delta_a(A), \quad \forall A \in \mathcal{P}(\{a\}).$$
\end{corollary}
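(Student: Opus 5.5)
The plan is to combine Proposition \ref{propcylindricalWienerprocess} with the radonification observation of Remark \ref{remarkQtraceclass}. An $H$-valued $Q$-white noise measure with $Q$ trace class is a cylindrical $Q$-white noise measure, via $W(t,A)h := \inner{W(t,A)}{h}_H$. Concretely, it is of the form
$$
W(t,A)=\sum_{n=1}^\infty \lambda_n^{1/2}W_n(t,A)h_n ,
$$
with the series converging in $L^2(\Omega;H)$ and $\Prob$-a.e. Here $W_n(t,A)=\lambda_n^{-1/2}\inner{W(t,A)}{h_n}_H$ for $\lambda_n>0$. If the object is only known through its covariance, one would first recover this form by Theorem \ref{theoLevyCharacterization}.

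First I would apply Proposition \ref{propcylindricalWienerprocess} to this cylindrical object. This yields a cylindrical Wiener process $B$ with covariance $Q$ such that $W(t,A)h=B_t(h)\delta_a(A)$ for all $h\in H$. Its proof moreover identifies $B_t(h)=\sum_n\lambda_n^{1/2}b^n_t\inner{h_n}{h}_H$, where $b^n_t=W_n(t,\{a\})$ are independent standard Brownian motions (Proposition II-1 in \cite{KarouiMeleard:1990}).

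Next I would radonify $B$. Since $\sum_n\lambda_n=\mathrm{Trace}(Q)<\infty$, the series
$$
\widetilde B_t:=\sum_{n=1}^\infty \lambda_n^{1/2}b^n_t h_n
$$
converges in $L^2(\Omega;H)$ for each $t$, with $\Exp\norm{\widetilde B_t}_H^2=t\,\mathrm{Trace}(Q)$, exactly as in Remark \ref{remarkQtraceclass}. Each partial sum is a continuous $H$-valued martingale. Doob's maximal inequality applied to the tail sums on $[0,T]$ gives convergence uniformly in time in $L^2$, so $\widetilde B$ has a continuous version. This version has independent Gaussian increments with covariance $(t-s)Q$, hence is an $H$-valued $Q$-Wiener process. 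By construction $\inner{\widetilde B_t}{h}_H=B_t(h)$ a.s. for every $h$.

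Finally, for each $t$ and $A$ I would compare $\inner{W(t,A)}{h}_H$ with $\inner{\widetilde B_t\delta_a(A)}{h}_H$; these agree a.s. for every $h$. Taking $h$ in a countable orthonormal basis identifies $W(t,A)=\widetilde B_t\delta_a(A)$ a.s. Since $\mathcal{P}(\{a\})$ has only two elements and both sides are continuous in $t$, the exceptional set can be chosen independent of $t$ and $A$.

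The step I expect to need the most care is the uniform-in-time convergence and continuity of $\widetilde B$, needed to make it a genuine $Q$-Wiener process. Doob's inequality $\Exp\sup_{t\le T}\norm{\sum_{n>N}\lambda_n^{1/2}b^n_th_n}^2\le 4T\sum_{n>N}\lambda_n$ should settle it directly.
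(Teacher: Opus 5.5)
Your proposal is correct and takes the same route as the paper, whose proof simply invokes Proposition \ref{propcylindricalWienerprocess} together with the trace-class radonification of Remark \ref{remarkQtraceclass}. You also supply details the paper leaves implicit: the Doob maximal-inequality argument that gives a continuous version of $\widetilde B$ (so it is a genuine $Q$-Wiener process, not just an $L^2$ limit at each fixed $t$), and the identification of $W(t,A)$ with $\widetilde B_t\delta_a(A)$ through a countable orthonormal basis.
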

\begin{proof}
This is a consequence of the result in Proposition \ref{propcylindricalWienerprocess} and Remark \ref{remarkQtraceclass}. 
\end{proof}

The main result of this section is the following. 

\begin{theorem}
\label{theoreprescylindricalmartingale}
Let $X$ be a separable, reflexive Banach space and let $(m_t: t \geq 0)$ be a cylindrical square integrable martingale on $X^*$ with operator quadratic covariation satisfying
$$\langle m(x^*_1), m(x^*_2)\rangle_t=\int_0^t \Bigl(\Phi (r)Q \Phi(r)^* x^*_1, x^*_2\Bigr)_H\,dr, \quad \forall x^*_1, x^*_2 \in X^*,$$ 
where $Q$ is a bounded, non-negative, self-adjoint operator on $H$ with  representation  \eqref{representationCovariaQDefiCylinWhineNoiseMeasure} and $\Phi: \Omega \times[0,T]  \rightarrow \mathcal{L}(H,X)$ is $ (\omega,t) \mapsto \Phi(\omega,t)h$ 
is $\mathcal{P}_{T}/\mathcal{B}(X)$-measurable for every $h \in H$ and such that for $\Psi(\omega,t)=\Phi(\omega,t) Q^{1/2}$, we have $\Psi(\omega,t) \Psi^{*}(\omega,t) \in \mathcal{K}^{+}(X)$. Then there exists a cylindrical Wiener process $B=(B_t: t \geq 0)$ in $H$ with covariance $Q$ defined on an extended probability space $(\Omega \times \widetilde{\Omega}, \mathcal{F} \times \widetilde{\mathcal{F}}, \Prob \times \widetilde{\Prob})$ adapted to the filtration $\{\mathcal{F}_{t} \times \widetilde{\mathcal{F}}_{t}\}$ such that
\begin{equation}
\label{equreprescylindricalmartingale}
m_t(\omega)=\int_0^t \Phi(\omega, r)\, dB_r (\omega, \widetilde{\omega}).
\end{equation}
\end{theorem}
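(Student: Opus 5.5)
The plan is to reduce Theorem \ref{theoreprescylindricalmartingale} to Theorem \ref{martingalerepresentationtheoremContinuousCase} by taking a one-point parameter space $U=\{a\}$, $\mathcal{A}=\mathcal{P}(U)$ and $\lambda=\delta_a$, as in Example \ref{examQWhiteNoiseCylindricaWiener}. Define $M(t,A)(x^*) := m_t(x^*)\,\delta_a(A)$ for $x^*\in X^*$ and $A\in\mathcal{P}(\{a\})$.

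\textbf{Step 1: $M$ satisfies the hypotheses of Theorem \ref{martingalerepresentationtheoremContinuousCase}.}
\begin{itemize}
\item Since $\mathcal{A}$ has only the sets $\emptyset$ and $\{a\}$, $M$ is trivially a $\sigma$-finite $L^2$-valued measure in $A$.
\item Orthogonality is automatic, because the only disjoint pair involving a nonempty set is $(\{a\},\emptyset)$ and $M(t,\emptyset)=0$.
\item $M$ has continuous paths because $m$ does. I would make explicit that continuity of $m$ is implicitly assumed, as in \cite{Ondrejat:2005}; it is needed for the Lévy characterization via \cite{KarouiMeleard:1990}.
\item The intensity measures are $\nu_{x^*}([0,t]\times A)=\langle m(x^*)\rangle_t\,\delta_a(A)$. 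They are dominated by $\int_0^t\norm{\Phi Q\Phi^*}\,dr\,\delta_a(A)$, which gives existence and uniqueness of the quadratic variation via Theorem 5.8 in \cite{CCFM:SPDE}.
\item By polarization of the covariation hypothesis, the operator-valued quadratic variation is
\[
\Gamma_M([0,t]\times A)=\int_0^t\int_A \Phi(r)Q\Phi^*(r)\,\delta_a(du)\,dr .
\]
This is exactly \eqref{eqOperatorQuaVariMRepresTheoremContinuous} with $\Phi(\omega,r,u):=\Phi(\omega,r)$, which is independent of $u$.
\item Predictability and the compactness condition $\Psi\Psi^*\in\mathcal{K}^+(X)$ transfer verbatim.
\end{itemize}

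\textbf{Step 2: apply the theorem and identify the noise.} Theorem \ref{martingalerepresentationtheoremContinuousCase} gives a cylindrical $Q$-white noise measure $W$ on $H$, based on $\mathbb{R}_+\times\{a\}$ with $\mathrm{Leb}\otimes\delta_a$ and defined on the extended space, such that
\[
M(t,A)=\int_0^t\int_A\Phi(r)\,W(dr,du)
\]
as cylindrical random variables. Proposition \ref{propcylindricalWienerprocess} then yields a cylindrical Wiener process $B$ in $H$ with covariance $Q$ and $W(t,A)=B_t\,\delta_a(A)$.

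\textbf{Step 3: convert the measure integral into an integral against $B$.} Since $\Phi$ is independent of $u$, Corollary \ref{lemmaPhiIndepU} gives
\[
\int_0^t\int_{\{a\}}\Phi(r)\,W(dr,du)=\int_0^t\Phi(r)\,W(dr,\{a\}) .
\]
The right-hand side is a stochastic integral with respect to the cylindrical martingale $W(\cdot,\{a\})=B$. Taking $A=\{a\}$ gives $m_t(x^*)=\bigl(\int_0^t\Phi(r)\,dB_r\bigr)(x^*)$ for every $x^*\in X^*$, which is \eqref{equreprescylindricalmartingale}.

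\textbf{Main obstacle.} I expect the hardest point to be the identification in Step 3. One must check that the non-radonifying integral with respect to the martingale-valued measure coincides with the cylindrical stochastic integral $\int\Phi\,dB$ used in \cite{Ondrejat:2005}. My plan is to verify agreement on simple integrands, where both reduce to $\sum_i\caract_{F_i}(B_{t\wedge t_i}-B_{t\wedge s_i})(S_i^*x^*)$. I would then pass to the limit using the isometry \eqref{eqIsometria} on one side and the Itô isometry for cylindrical Wiener integrals on the other. Both isometries are governed by the same quantity $\Exp\int_0^T\langle \Phi Q\Phi^*x^*,x^*\rangle\,dr$, so the two integrals agree on the common closure of the simple integrands.
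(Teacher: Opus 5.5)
Your proposal is correct and is essentially the paper's proof: embed $m$ as $M(t,A)=m_t\,\delta_a(A)$ on the one-point space, apply Theorem \ref{martingalerepresentationtheoremContinuousCase}, identify $W(t,A)=B_t\,\delta_a(A)$ via Proposition \ref{propcylindricalWienerprocess}, and take $A=\{a\}$. Your Step 3 (Corollary \ref{lemmaPhiIndepU} plus matching isometries) and your remark that continuity of $m$ must be assumed spell out points the paper passes over. One caveat: your domination argument for the existence of $\operQuadraVari{M}$ needs an integrability condition such as $\Exp\int_0^T\norm{\Phi Q\Phi^*}\,dr<\infty$ (compare the extra hypothesis \eqref{eqAdditionalHip} in Theorem \ref{theoDensities}), which does not follow from the covariation hypothesis alone and which the paper also assumes tacitly.
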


\begin{proof} Let $M(t,A)=m_t \delta_a(A)$, where we consider an one point set $U=\{a\}$ and $\mathcal{A}=\mathcal{P}(U)$. 
In that case, the operator-valued quadratic variation of $M$ is given by
$$
\Gamma_{M}([0,t] \times A) = \delta_a(A)\int_0^t \; \Phi (r,u) Q\Phi^*(r,u)\; dr.
$$
Hence, using Theorem \ref{martingalerepresentationtheoremContinuousCase} we can conclude that 
there exists a 
cylindrical $Q$-white noise measure in $H$ based on $\left(\mathbb{R}_{+} \times \{a\}, \mathcal{B}\left(\mathbb{R}_{+}\right) \otimes \mathcal{P}(\{a\}),Leb \otimes \delta_a\right)$ defined with respect to an extended probability space $(\Omega \times \widetilde{\Omega}, \mathcal{F} \times \widetilde{\mathcal{F}}, \Prob \times \widetilde{\Prob})$ adapted to the filtration $\{\mathcal{F}_{t} \times \widetilde{\mathcal{F}}_{t}\}$, such that 
 $$
M(t,A)(\omega)= \int_0^t\int_A \Phi(\omega,r)W(dr,du)(\omega, \widetilde{\omega}).
$$
Now, with the help of Proposition \ref{propcylindricalWienerprocess}, we can conclude that
there exists a cylindrical $Q$-Wiener process $B=(B_t: t \geq 0)$ such that $W(t,A)=B_t \delta_a$. Then for each $t \geq 0$ we have 
\begin{equation}
\label{theocylindricalmartingalerepres}
m_t (\omega)\, \delta_a(A)=M(t,A)(\omega)=\left(\int_0^t \Phi(r)\, dB_r (\omega, \widetilde{\omega})\right)  \, \delta_a (A).
\end{equation}
Now if we take $A=\{a\}$ in (\ref{theocylindricalmartingalerepres}) we obtain (\ref{equreprescylindricalmartingale}).

\end{proof}

In the case where we have a (classical) square integrable martingale taking values in a Banach space, and with the assumption that $Q$ is of trace class, we can strengthen the conclusions of Theorem \ref{theoreprescylindricalmartingale} to obtain a representation as a stochastic integral with respect to a Hilbert-space valued $Q$-Wiener process. 

\begin{theorem}
\label{theoRepClassMartBan}
Let $X$ be a separable, reflexive Banach space and let $(m_t: t \geq 0)$ be an $X$-valued square integrable martingale, with operator quadratic covariation satisfying
$$\langle x^*_1(m), x^*_2(m)\rangle_t=\int_0^t \Bigl(\Phi (r)Q \Phi(r)^* x^*_1, x^*_2\Bigr)_H\,dr, \quad \forall x^*_1, x^*_2 \in X^*,$$ 
for $Q\in \mathcal{K}^+(H)$ and $\Phi: \Omega \times[0,T]  \rightarrow \mathcal{L}(H,X)$ is $ (\omega,t) \mapsto \Phi(\omega,t)h$ 
is $\mathcal{P}_{T}/\mathcal{B}(X)$-measurable for every $h \in H$. Then there exists an $H$-valued $Q$-Wiener process $B=(B_t: t \geq 0)$, defined on an extended probability space $(\Omega \times \widetilde{\Omega}, \mathcal{F} \times \widetilde{\mathcal{F}}, \Prob \times \widetilde{\Prob})$ adapted to the filtration $\{\mathcal{F}_{t} \times \widetilde{\mathcal{F}}_{t}\}$ such that
\begin{equation}
\label{equreprescylindricalmartingaleclassic}
m_t(\omega)=\int_0^t \Phi(\omega, r)\, dB_r (\omega, \widetilde{\omega}).
\end{equation}
This equality must be taken as $X$-valued random variables.
\end{theorem}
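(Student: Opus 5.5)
The plan is to reduce to Theorem \ref{theoreprescylindricalmartingale} and then upgrade the cylindrical objects to genuine vector-valued ones, much as Theorem \ref{theoremClassicalMartingaleRepresentation} upgraded Theorem \ref{martingalerepresentationtheoremContinuousCase}.

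First, since $Q\in\mathcal{K}^+(H)$, part (1) of Remark \ref{remaTraceClassQ} gives a representation \eqref{representationCovariaQDefiCylinWhineNoiseMeasure} with $\lambda_n\searrow 0$, convergent in operator norm. Hence $\Psi\Psi^* = \Phi Q^{1/2}(\Phi Q^{1/2})^*\in\mathcal{K}^+(X)$ by the ideal property. The $X$-valued martingale $m$ defines a cylindrical martingale $x^*\mapsto \langle m_t,x^*\rangle$ with the stated covariation, so all hypotheses of Theorem \ref{theoreprescylindricalmartingale} hold. That theorem yields a cylindrical $Q$-Wiener process $B$ on the extended space with
\[
\langle m_t,x^*\rangle = \Big(\int_0^t\Phi(r)\,dB_r\Big)(x^*)
\]
for all $x^*\in X^*$.

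Second, I must show that $B$ is in fact $H$-valued. Here I would use trace class of $Q$, as in Remark \ref{remarkQtraceclass} and Corollary \ref{coroHvaluedWN}. Tracing through the proof of Theorem \ref{martingalerepresentationtheoremContinuousCase} specialized to $U=\{a\}$, the noise is $W=B\delta_a$, built as a cylindrical $Q$-white noise, i.e.\ $B_t h=\sum_n\lambda_n^{1/2}b^n_t(h_n,h)$ with independent Brownian motions $b^n$. If $\sum_n\lambda_n<\infty$, the series $\sum_n\lambda_n^{1/2}b^n_t h_n$ converges in $L^2(\Omega;H)$ and a.s.\ (It\^o--Nisio), so Corollary \ref{coroHvaluedWN} gives an $H$-valued $Q$-Wiener process.

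This is the main obstacle. The statement assumes only $Q\in\mathcal{K}^+(H)$, while the $H$-valued upgrade needs $Q\in\mathcal{L}_1^+(H)$, as the sentence preceding the theorem indicates. I would therefore read the hypothesis as trace class, or else argue that the representation only involves $Q$ through $\Phi Q^{1/2}$ and replace $Q$ on $\overline{\mathrm{Im}}$ accordingly. I expect the intended reading to be trace class.

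Third, I would identify the cylindrical integral with an $X$-valued random variable. Since $m_t$ is already an $X$-valued square integrable random variable and $\langle m_t,x^*\rangle$ agrees with the cylindrical integral tested against every $x^*$ in the separable dual $X^*$, the integral is radonified by $m_t$ itself. Fix a countable dense set $(x_n^*)$. The identity then holds a.s.\ simultaneously for all $n$, and by continuity of both sides in $x^*$ (continuity of the cylindrical integral into $L^0$, plus density) it determines an $X$-valued random variable a.s. Define $\int_0^t\Phi\,dB$ as this random variable, consistently with Proposition \ref{Propcompatibilityofintegrals} in the Hilbert case. Reflexivity and separability of $X$ ensure that weak measurability coincides with strong measurability (Pettis), which gives \eqref{equreprescylindricalmartingaleclassic} as an equality of $X$-valued random variables.
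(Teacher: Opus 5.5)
Your proposal follows the paper's proof, which likewise reruns the argument of Theorem~\ref{theoreprescylindricalmartingale} (i.e.\ Theorem~\ref{martingalerepresentationtheoremContinuousCase} with $U=\{a\}$) and replaces Proposition~\ref{propcylindricalWienerprocess} by Corollary~\ref{coroHvaluedWN}; the paper leaves your radonification step implicit. You are right that Corollary~\ref{coroHvaluedWN} needs $Q$ to be trace class, as the sentence before the theorem assumes, and your fallback of modifying $Q$ cannot rescue the $\mathcal{K}^+(H)$ version, since an $H$-valued $Q$-Wiener process exists only for trace-class $Q$; note also that both arguments tacitly need $m$ to have continuous paths, because Theorem~\ref{martingalerepresentationtheoremContinuousCase} does.
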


\begin{proof}
    Notice that $\Psi(\omega,t)=\Phi(\omega,t) Q^{1/2}$ satisfies $\Psi(\omega,t) \Psi^{*}(\omega,t) \in \mathcal{K}^{+}(X)$. Then we proceed as in the proof of Theorem \ref{theoreprescylindricalmartingale}, but instead of using Proposition \ref{propcylindricalWienerprocess} we apply Corollary \ref{coroHvaluedWN}.
\end{proof}

\begin{remark}
A martingale representation theorem for Hilbert space-valued square integrable martingale can be found in the literature, for example in (\cite{DaPratoZabczyk}, Theorem 8.2). 
Our Theorem \ref{theoRepClassMartBan} extends these results in two directions. First, we allow our square integrable martingale to take values in a separable reflexive Banach space. Second, we only require  $\Phi(\omega,t) \in \mathcal{L}(H,X)$ while the authors in  \cite{DaPratoZabczyk} require (for $X$ Hilbert) that $\Phi(\omega,t) \in \mathcal{HS}(H_{Q},X)$. The later is a more demanding condition in $\Phi$ than ours.   
\end{remark}

\section{Application: Characterization of solutions to the weak martingale problem}\label{sectWeakMartingaleProblem}

In this section we apply Theorem \ref{martingalerepresentationtheoremContinuousCase} to characterize solutions of a weak martingale problem as weak solutions to a particular stochastic differential equation (SDE) driven by cylindrical white noise measure noise. Given a probability space $(\Omega,\calF,\Prob)$, we study the following  SDE in differential form:
% \[
% dX_t = \psi(t)\, dA(t) + \int_U \Phi(t,u)\, M(dt,du).
% \]
% with initial condition $X_{0}=\xi$, where 

% \[
% X_t = \xi + \int_0^t \psi(s)\, dA(s) + \int_0^t \int_U \Phi(s,u)\, M(ds,du).
% \]
%In the standard SDE form:
\begin{equation}
\label{eqSDE}
   dX_t = a(t,X_t)\,dt + \int_U b(t,u,X_t)\, W(dt,du),
\end{equation}
where
\begin{enumerate}
\item $a:[0,T]\times G \to G$ and $b:[0,T]\times U \times G \to \calL(H,G)$ satisfy that for each $x\in C([0,T],G)$ the mapping $
t\mapsto a(t,x_t)$ is  $\mathcal{B}([0,T])/\mathcal{B}(G)$ measurable and $  (t,u) \mapsto b(t,u,x_t)h$ is $ \mathcal{B}([0,T]) \otimes \mathcal{B}(U)/\mathcal{B}(G)$-measurable for every $h \in H$.  
\item $W$ is a cylindrical $Q$-white noise measure based on $\left(\mathbb{R}_{+} \times U, \mathcal{B}\left(\mathbb{R}_{+}\right) \otimes \mathcal{B}(U),Leb \otimes \lambda\right)$, where $Q$ is a bounded, non-negative, self-adjoint operator on $H$ with  representation  \eqref{representationCovariaQDefiCylinWhineNoiseMeasure} and $\|Q\|=1$, $\lambda$ is a $\sigma$-finite measure on $U$, which is finite on the sets in $\mathcal{A}$. 
\end{enumerate}

With the conditions above, we introduce the concepts of strong and weak distributional solutions to \eqref{eqSDE}.

\begin{definition}
\label{weakfunctsol}
A \emph{strong solution} to \eqref{eqSDE} on the probability space $(\Omega, \mathcal{F}, \Prob)$, with initial value $\xi$, is a $G$-valued continuous adapted process $X=(X_{t}: 0 \leq t \leq T)$ satisfying $\Prob$-a.e.
\begin{enumerate}
    \item  $\displaystyle \int_{0}^{T} \norm{a(t,X_{t})}_{G} dt< \infty.$ 

    \item
    $\displaystyle \int_{0}^{T} \int_{U} \norm{b(t,u,X_{t})Q^{1/2}}^2_{\mathcal{HS}(H,G)}\lambda(du)dt < \infty.$ 
    %\item $b(t,u,X_{t}(\omega))Q^{1/2} \in \mathcal{HS}(H,G)$ for each $(\omega,t,u)$ and   $\Prob$-a.e.  
    %$$ \int_{0}^{T} \int_{U} \norm{b(t,u,X_{t})Q^{1/2}}^{2}_{\mathcal{HS}(H,G)}\lambda(du)dt < \infty.  $$ 
    \item 
    $$\displaystyle  X_{t}= \xi + \int_{0}^{t} a(r,X_{r})dr + \int_{0}^{t} \int_{U} \, b(r,u,X_{r}) \, W(dr,du). 
    $$ 
    % \label{EQdefiweaksolution}  For every $g \in G$, $$\displaystyle \inner {X_{t}}{g}= \inner{\xi}{g} + \int_{0}^{t} \inner{a(r,X_{r})}{g}dr + \left(\int_{0}^{t} \int_{U} \, b(r,u,X_{r}) \, W(dr,du)\right)(g), 
    % $$ 
\end{enumerate}
\end{definition}

Observe that if $X$ is a strong  solution to \eqref{eqSDE}, then $X$ is a $G$-valued continuous semimartingale. In effect, since is defined $\omega$-wise as a Bochner integral, then $\int_{0}^{t} \, a(r,X_{r})dr$ is a $G$-valued continuous adapted process of finite variation. On the other hand, the integral $\displaystyle\int_{0}^{t} \int_{U} \, b(r,u,X_{r}) \, W(dr,du)$ is defined as a $G$-valued locally square integrable martingale using the theory of stochastic integration developed in \cite{CCFM:SPDE}.

\begin{definition}
A \emph{weak distributional solution} to \eqref{eqSDE} with initial distribution $\mu$ consists of a triplet $(\Omega, \mathcal{F}, \Prob)$, a  cylindrical $Q$-white noise measure based on $\left(\mathbb{R}_{+} \times U, \mathcal{B}\left(\mathbb{R}_{+}\right) \otimes \mathcal{B}(U),Leb \otimes \lambda\right)$ and a $G$-valued continuous adapted process $X=(X_{t}: 0 \leq t \leq T)$ for which $X_{0}$ has law $\mu$ and which is a strong solution to \eqref{eqSDE} on $(\Omega,\calF,\Prob)$.
\end{definition}

% We can think of solutions to equation \eqref{eqSDE} as those processes $X$ that coincide with the It\^{o} process corresponding to

% (see \cite[Definition 4.1]{CCFM:Ito}). $W$ is a cylindrical $(Q,\lambda)$-white noise measure

% For $f \in C^2(G;\R)$, if we use the It\^{o} formula in the continuous paths setting, we have
% \begin{align*}
% f(X_t) &= f(\xi) 
% + \int_0^t \int_U f'(X_s)\, b(s,u,X_s)\, M(ds,du) 
% + \int_0^t f'(X_s)\, a(s,X_s)\, ds \\
% &\quad + \tfrac{1}{2}\int_0^t \int_U 
% \operatorname{Tr}_{b(s,u,X_s)\, Q_M^{1/2}}\left( f''(X_s)\right) \,
% \operQuadraVari{M}(ds,du).
% \end{align*}
% (see for instance Corollary 4.9 in \cite{CCFM:Ito}).

% The infinitesimal generator is given by
% \[
% \int_0^t A_s f(X_s) ds = \int_0^tf'(X_s)\, a(s,X_s)ds + \tfrac{1}{2} \int_0^t \int_U 
% \operatorname{Tr}_{b(s,u,X_s)\, Q_M^{1/2}}\left( f''(X_s)\right) \operQuadraVari{M}(ds,du).
% \]

% The compensated martingale is
% \[
% M^f_t = f(X_t) - f(\xi) - \int_0^t A_s f(X_s)\, ds.
% \]

% The quadratic variation of $X_t$ is
% \[
% [X]_t = \Bigg[ \int_0^t \int_U b(s,u,X_s)\, M(ds,du)\Bigg]_t
% = \int_0^t \int_U b(s,u,X_s)\, Q_M\, b(s,u,X_s)^* \, 
% \langle\!\langle M \rangle\!\rangle(ds,du).
% \]

Now we define the martingale problem associated to the stochastic differential equation \eqref{eqSDE}. Before, we need to establish some notation. 

Denote by $f\in C_{u,loc}^{2}(G)$ the collection of all the functions $f:G \rightarrow G$ which are twice differentiable and for which $f_{xx}$ is uniformly continuous on bounded subsets of $G$. 

For a continuous bilinear form $\zeta: G\times G \rightarrow G$ and $S,R \in \mathcal{HS}(H,G)$ we use $\mbox{Tr}_{S,R}(\zeta)$  to denote the \emph{trace} defined by
$$ \mbox{Tr}_{S,R}(\zeta)=\sum_{j=1}^{\infty} \zeta(S h_{j},R h_{j}), $$

where $(h_{j})_{j \in \N}$ is an orthonormal basis in $H$.

\begin{definition}
Let $P$ be a probability distribution on $C([0,T],G)$. We say that $P$ solves the \emph{weak martingale problem} for $(a,b,Q,\lambda)$ if for any $G$-valued continuous adapted process $X$ defined on a probability space $(\Omega,\mathcal{F},\mathbb{P})$ whose probability distribution is $P$, we have for  
every $f\in C_{u,loc}^{2}(G)$,
\[
M^f_t = f(X_t) - f(\xi) - \int_0^t A_s f(X_s)\, ds
\]
is a $G$-valued continuous martingale on $(\Omega,\mathcal{F},\mathbb{P})$, where the infinitesimal generator $A_t$ satisfies
$$
\int_0^t A_s f(x_s)\, ds = \tfrac{1}{2} \int_0^t \int_U 
\operatorname{Tr}_{b(s,u,x_s)\, Q^{1/2}}\left( f''(x_s)\right) \lambda(du) ds
+ \int_0^t f'(x_s)\, a(s,x_s)\, ds,
$$
for any $x\in C([0,T],G)$. In such a case, we also say that $X$ solves the martingale problem. 
\end{definition}

%  A $G$-valued continuous process $X$ on a probability space $(\Omega,\mathcal{F},\mathbb{P})$ solves the \emph {martingale problem} for $(a,b,Q,\lambda)$ if for every $f\in C^\infty(G)$,
% \[
% M^f_t = f(X_t) - f(\xi) - \int_0^t A_s f(X_s)\, ds
% \]
% is a martingale in $(\Omega,\mathcal{F},\mathbb{P})$, where the infinitesimal generator $A_t$ satisfies
% $$
% \int_0^t A_s f(x_s)\, ds = \tfrac{1}{2} \int_0^t \int_U 
% \operatorname{Tr}_{b(s,u,x_s)\, Q_M^{1/2}}\left( f''(x_s)\right) \lambda(du) ds
% + \int_0^t f'(x_s)\, a(s,x_s)\, ds,
% $$
% for any $x\in C([0,T],G)$.

% \begin{theorem}
% $X$ solves the martingale problem for $(a,b,Q,\lambda)$ if and only if $X$ is a weak solution to the SDE
% \begin{equation}
% \label{SDEequation}
% dX_t = a(t,X_t)\,dt + \int_U b(t,u,X_t)\, W(dt,du),
% \end{equation}
% \end{theorem}

The following is the main result of this section. It establish a one to one relationship between the existence of a weak distributional solution and the existence of a solution to the martingale problem. 

\begin{theorem}\label{theoWeakSolutionEquival}
Given $(a,b,Q,\lambda)$ and a probability distribution $P$ on $C([0,T],G)$ the following statements are equivalent:
\begin{enumerate}
\item  Equation \eqref{eqSDE} for $(a,b,Q,\lambda)$  has a weak distributional solution with distribution $P$. 
\item $P$ solves the weak martingale problem for $(a,b,Q,\lambda)$. 
\end{enumerate}
\end{theorem}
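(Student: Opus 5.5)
The two implications are of very different difficulty. (1)$\Rightarrow$(2) is Itô's formula. (2)$\Rightarrow$(1) combines a localization argument that identifies the martingale part and its operator quadratic variation with Theorem \ref{theoremClassicalMartingaleRepresentation}.

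\emph{(1)$\Rightarrow$(2).} Let $(\Omega,\calF,\Prob)$, $W$ and $X$ be a weak distributional solution with law $P$. By the remarks after Definition \ref{weakfunctsol}, $X$ is a continuous $G$-valued semimartingale $X_t=\xi+\int_0^t a(r,X_r)dr+L_t$, with martingale part
$$L_t=\int_0^t\int_U b(r,u,X_r)W(dr,du).$$
By \eqref{eqOperatorQWofCylindWhiteNoise}, \eqref{eqQuadraVariaCylindricalWhiteNoise} and $\norm{Q}=1$ we have $Q_W=Q$ and $\operQuadraVari{W}=\mathrm{Leb}\otimes\lambda$. Theorem \ref{theoDensities}, in its radonified Hilbert form via Proposition \ref{Propcompatibilityofintegrals}, then gives the operator quadratic variation $\int_0^t\int_U b(r,u,X_r)Qb^*(r,u,X_r)\lambda(du)dr$ for $L$. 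We apply Itô's formula for continuous Hilbert-space semimartingales (e.g.\ the one in \cite{CCFM:Ito}) to $f\in C^2_{u,loc}(G)$. The second-order term becomes $\tfrac12\int\int \mathrm{Tr}_{bQ^{1/2}}(f''(X_s))\lambda(du)ds$. Hence $M^f_t=\int_0^t\int_U f'(X_s)b(s,u,X_s)W(ds,du)$, which is a continuous local martingale. It is a true martingale after stopping at the exit times $\tau_R$ of balls, and we read the martingale property in the problem in that localized sense. Since the conclusion depends only on the law $P$, it holds for any process with law $P$.

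\emph{(2)$\Rightarrow$(1).} Let $X$ have law $P$ on some $(\Omega,\calF,\Prob)$ with its natural filtration. We apply the martingale property with $f(x)=\inner{x}{g}$ and with $f(x)=\inner{x}{g_1}\inner{x}{g_2}$, both localized by $\tau_R$. Set
$$L_t:=X_t-X_0-\int_0^t a(s,X_s)ds.$$
The first choice of $f$ shows that each $\inner{L_t}{g}$ is a continuous local martingale. The second choice, combined with integration by parts, gives
$$\quadraVari{\inner{L}{g_1},\inner{L}{g_2}}_t=\int_0^t\int_U\inner{b(s,u,X_s)Qb^*(s,u,X_s)g_1}{g_2}\lambda(du)ds.$$
To get an orthogonal martingale-valued measure indexed by $A\in\calA$, we do not need to decompose $L$ itself. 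Instead, on an enlarged space we use the representation directly with $U$ replaced by a one-point set and $\lambda(U)$ absorbed into $Q$. Alternatively, we define $M(t,A)$ as an integral against an auxiliary independent white noise plus $L$, chosen so that $M(t,U)=L_t$; the first option is cleaner.

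Concretely, set $\Phi(s,u)=b(s,u,X_s)$. Predictability of $\Phi$ follows from the measurability assumptions on $b$ and the continuity and adaptedness of $X$. The trace condition (2) in Definition \ref{weakfunctsol} follows, as in \eqref{eqOperaQuadVariaIsHilbSchmitInHValuedMVM}, from the finiteness of $\Exp\norm{L_{t\wedge\tau_R}}^2$. After localization, Theorem \ref{theoremClassicalMartingaleRepresentation} (or Theorem \ref{theoRepClassMartBan}) yields, on $\Omega\times\widetilde\Omega$, a cylindrical $Q$-white noise measure $W$ with
$$L_t=\int_0^t\int_U b(s,u,X_s)W(ds,du).$$
Together with the definition of $L$, this says $X$ is a strong solution on the extended space, and its law is still $P$. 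The stopped representations on $[0,\tau_R]$ are consistent as $R$ grows, so they patch together to give the representation on all of $[0,T]$.

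The main obstacle is the mismatch between the representation theorems, which are stated for square-integrable objects indexed by $\calA$, and the data of the martingale problem, which only give local martingales in time with a total covariance integrated over $U$. The plan is to handle time by stopping at $\tau_R$. For the space variable, the intended route is to build the orthogonal measure $M(t,A)$ by enlarging the space with an independent cylindrical white noise $\widetilde W$ and set
$$M(t,A):=\int_0^t\int_A\Phi\,\widetilde W(dr,du)$$
corrected so that the $U$-marginal equals $L$. Executing this correction rigorously is the technical heart of the argument, and it is where the argument is most likely to require additional structure on $b$.
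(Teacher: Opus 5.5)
\paragraph{Gap in (2)$\Rightarrow$(1).} The proof of (2)$\Rightarrow$(1) is not completed at its decisive step, as you acknowledge yourself. From the single $G$-valued martingale $L$, whose covariance density is the $U$-integrated operator $\int_U b(s,u,X_s)Qb^*(s,u,X_s)\,\lambda(du)$, you never construct a cylindrical $Q$-white noise measure based on the original $(\R_+\times U,\mathrm{Leb}\otimes\lambda)$ with $L_t=\int_0^t\int_U b(s,u,X_s)W(ds,du)$.

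The option you call cleaner does not work as stated. Since $b$ depends on $u$, that operator is not $b\,(\lambda(U)Q)\,b^*$, and $\lambda(U)$ may be infinite because $\lambda$ is only $\sigma$-finite. In any case, a representation on a one-point set yields a noise based on $\mathrm{Leb}\otimes\delta_a$, against which $b(s,u,X_s)$ with $u\in U$ cannot be integrated. The other option, an auxiliary noise ``corrected'' so that its $U$-marginal is $L$, is not carried out.

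For comparison, the paper's proof takes the one-point route of your first option. It sets $\widetilde M(t,A)=M_t\delta_a(A)$, invokes Theorem \ref{theoremClassicalMartingaleRepresentation}, and states the conclusion for $(U,\lambda)$ without addressing this mismatch. So you have located the difficulty correctly, but naming the obstacle does not remove it.

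\paragraph{A possible fix.} One way to close the gap, with no extra structure on $b$, is to lift to $H_0:=L^2(U,\lambda;H)$.
Put $\Phi_0(s)k:=\int_U b(s,u,X_s)Q^{1/2}k(u)\,\lambda(du)$. Then $\Phi_0\Phi_0^*=\int_U bQb^*\,\lambda(du)$ and $\norm{\Phi_0}^2_{\mathcal{HS}(H_0,G)}=\int_U\norm{bQ^{1/2}}^2_{\mathcal{HS}(H,G)}\lambda(du)$.
The one-point reduction now applies legitimately, with identity covariance on $H_0$. Together with Proposition \ref{propcylindricalWienerprocess} it gives a cylindrical Wiener process $B$ on $H_0$ with $L_t=\int_0^t\Phi_0(s)\,dB_s$.
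Next, $\widehat W(t,A)h:=B_t(\caract_A h)$ satisfies $\quadraVari{\widehat W(A)h,\widehat W(A)k}_t=t\lambda(A)\inner{h}{k}_H$. By Theorem \ref{theoLevyCharacterization} it is a cylindrical $I$-white noise measure on $(U,\lambda)$, and by the same theorem $W:=\int Q^{1/2}\,d\widehat W$ is a cylindrical $Q$-white noise measure.
Finally, $\int b\,dW=\int bQ^{1/2}\,d\widehat W=\int\Phi_0\,dB$. This follows from Theorem \ref{theoAssociatIntegral} and a check on simple integrands, since $\Psi\mapsto\Phi_0^{\Psi}$ is an isometry from $L^2(U,\lambda;\mathcal{HS}(H,G))$ into $\mathcal{HS}(H_0,G)$.

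\paragraph{Smaller points.}
Your claim that the stopped representations ``patch together'' needs proof. The noise the representation theorem builds for $L^{\tau_R}$ depends on $R$, since after $\tau_R$ it comes entirely from the auxiliary noises. Consistency therefore requires fixing the auxiliary noises once and checking that the construction is local in time.
Reading ``martingale'' as ``local martingale'' alters the statement. That said, the paper's ``which is a martingale'' in (1)$\Rightarrow$(2) also relies on integrability it does not verify.
On the positive side, your identification of the full covariance via $f(x)=\inner{x}{g_1}\inner{x}{g_2}$ (times a unit vector $z$, to fit the $G$-valued setting) is more careful than the paper's argument. The paper uses $f(x)=\norm{x}^2z$, obtains only the trace $\quadraVari{M}_t$, and then asserts the operator covariance ``in particular''.
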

\begin{proof} 
Assume a process $X$ with distribution $P$ and a cylindrical $Q$-Wiener white noise $W$ defined both on a probability space $(\Omega, \mathcal{F}, \Prob)$ are a weak distributional solution to \eqref{eqSDE}. By an application of the It\^{o} formula in the continuous paths setting (see for instance Corollary 4.9 in \cite{CCFM:Ito}), one has that
\begin{align*}
M_t^f&=f(X_t) - f(\xi)-\int_0^t A_s f(X_s)\,ds = \int_0^t \int_U f'(X_s)\, b(s,u,X_s)\, W(ds,du), 
\end{align*}
which is a martingale, for any $f\in C_{u,loc}^{2}(G)$.

Conversely, assume that $X$ defined on $(\Omega, \mathcal{F}, \Prob)$ solves the martingale problem for $(a,b,Q,\lambda)$. If we take $f(x)=x$, one can deduce that
\begin{equation}
\label{martingaleproblem1}
M_t = X_t - X_0 - \int_0^t a(s,X_s)\,ds
\end{equation}
is a $G$-valued continuous martingale. Note that $\langle M \rangle_t=\langle X \rangle_t$. 

Using integration by parts for $G$-valued semimartingales (see for instance Theorem 26.5 of \cite{Metivier}), we have that
$$
\|X_t\|_G^2-\|X_0\|^2_G=2 \int_0^t \widetilde{X}_s \, dX_s+\langle X \rangle_t =2 \int_0^t \widetilde{X}_s \, dX_s+\langle M \rangle_t,
$$
where $\widetilde{X}_s(g)=(X_s,g)_G$ for every $g \in G$. 
Defining $\displaystyle \int_0^t X_s\, dM_s=\int_0^t \widetilde{X}_s\,dM_s$ and with the help of (\ref{martingaleproblem1}) one can deduce  $$\displaystyle \int_0^t X_s\, dM_s=\int_0^t \widetilde{X}_s\, dX_s-\int_0^t (X_s, a(s,X_s))_G\, dx.$$ 
We conclude that
\begin{equation}
\label{integrationbypartsformula}
\|X_t\|_G^2-\|X_0\|_G^2=2 \int_0^t X_s\, dM_s+2\int_0^t (X_s, a(s, X_s))_G\,ds+\langle M \rangle_t.
\end{equation}
On the other hand, taking $f(x)=(x,x)_G z$ for $z \in G$ with $\|z\|_G=1$, since $X$ satisfies the martingale problem we can infer that
\begin{equation}
\label{martingaleproblem2}
\widetilde{M}_t=\|X_t\|_G^2 - \|X_0\|_G^2 - \int_0^t 2\langle X_s,a(s,X_s)\rangle_G ds - \int_0^t \int_{U} \norm{b(s,u,X_s)Q^{1/2}}_{\mathcal{HS}(H,G)}^2 \lambda(du)ds
\end{equation}
is also a martingale. Using (\ref{integrationbypartsformula}) and (\ref{martingaleproblem2}), one has that
$$
\widetilde{M}_t=2 \int_0^t X_s\, dM_s+\langle M \rangle_t-\int_0^t \int_{U}\norm{b(s,u,X_s)Q^{1/2}}_{\mathcal{HS}(H,G)}^2 \, \lambda(du)ds.
$$
Hence 
$$\displaystyle \langle M \rangle_t-\int_0^t \int_{U}\norm{b(s,u,X_s)Q^{1/2}}_{\mathcal{HS}(H,G)}^2\,\lambda(du)ds$$ 
is a martingale, which has a locally finite variation. As a result 
$$\displaystyle \langle M \rangle_t=\int_0^t \int_{U} \norm{b(s,u,X_s)Q^{1/2}}^2_{\mathcal{HS}(H,G)}\, \lambda(du)ds,$$ 
in particular we have 
$$ \langle M(g_1), M(g_2)\rangle_t=\int_0^t \int_{U}  \Bigl(b(s,u,X_s)Q b(s,u,X_s)^* g_1, g_2\Bigr)_G \,\lambda(du) ds, \quad \forall g_1, g_2 \in G.$$
Applying Theorem \ref{theoremClassicalMartingaleRepresentation} (here one can take the $G$-valued orthogonal martingale-valued measure $\widetilde{M}(t,A)=M_t \delta_a(A)$, where we consider an one point set $U=\{a\}$ and $\mathcal{A}=\mathcal{P}(U)$) we have $b(s,u,X_s) \in \mathcal{HS}(H,G)$ and there exists a 
cylindrical $Q$-white noise measure on $H$ based on $\left(\mathbb{R}_{+} \times U, \mathcal{B}\left(\mathbb{R}_{+}\right) \otimes \mathcal{B}(U),Leb \otimes \lambda\right)$  defined with respect to an extended probability space $(\Omega \times \widetilde{\Omega}, \mathcal{F} \times \widetilde{\mathcal{F}}, \Prob \times \widetilde{\Prob})$ adapted to the filtration $\{\mathcal{F}_{t} \times \widetilde{\mathcal{F}}_{t}\}$, such that 
 \begin{equation*}
\label{representationformulaforb}
M_t= \int_0^t\int_U b(s,u,X_s)W(ds,du)(\omega, \widetilde{\omega}).
\end{equation*}
Then with the help of (\ref{martingaleproblem1}) we conclude that $X_t$ satisfies \eqref{eqSDE}. Thus, $X$ is a weak distributional solution to \eqref{eqSDE} with distribution $P$. 
\end{proof}

\section{Appendix}

The following proposition is a fundamental step of the proof of the martingale representation theorem (Theorem \ref{martingalerepresentationtheoremContinuousCase}). Moreover, it extends Lemma 8.9 in \cite{PeszatZabczykSPDE}.

\begin{proposition}\label{propCanonicalDecompoRandomCompact}
Let $\Phi: \Omega \times [0,T] \times U \rightarrow \mathcal{K}^{+}(H)$ be such that for every $h_{1}, h_{2} \in H$ the mapping $(\omega, t, u) \mapsto \inner{\Phi(\omega, t, u)h_{1}}{h_{2}}$ is $\mathcal{P}_{T} \otimes \mathcal{B}(U)$-measurable. Then, there exists a decreasing sequence  $(\lambda_{n}:n \in \N)$ of non-negative $\mathcal{P}_{T} \otimes \mathcal{B}(U)$-measurable processes with $\lambda_{n} \rightarrow 0$, and a sequence $(\varphi_{n}: n \in \N)$ of $H$-valued $\mathcal{P}_{T} \otimes \mathcal{B}(U)$-measurable processes such that 
\begin{equation}\label{eqCanonicalDecompoRandomCompact}
  \Phi(\omega, t, u) = \sum_{n=1}^{\infty} \lambda_{n}(\omega, t, u) (\varphi_{n} (\omega, t, u)  \otimes \varphi_{n} (\omega, t, u)), \quad \forall \,  \omega \in \Omega, t \geq 0, u \in U.  
\end{equation}
Moreover, we can choose $\lambda_{n}$ and $\varphi_{n}$ such that for all $\omega \in \Omega$, $t \geq 0$, $u \in U$,
\begin{equation}\label{eqNormFunctionsG}
\norm{\varphi_{n}(\omega, t, u)}_{H}=
\begin{cases}
1 & \mbox{ if } \lambda_{n}(\omega, t, u) >0, \\
0 & \mbox{ if } \lambda_{n}(\omega, t, u) =0,
\end{cases}    
\end{equation}
and 
\begin{equation}\label{eqOrthogonalFunctionsG}
\inner{\varphi_{m} (\omega, t, u) }{ \varphi_{n} (\omega, t, u)}_{H}=\delta_{m,n},  \quad \mbox{ for } m \neq n. 
\end{equation}    
\end{proposition}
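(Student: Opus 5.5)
We construct the eigenvalues and eigenvectors pointwise by a deterministic, measurable selection procedure, following the idea of Lemma 8.9 in \cite{PeszatZabczykSPDE}. Fix an orthonormal basis $(e_k)$ of $H$ and a countable dense set $D=\{d_k\}$ in the unit sphere of $H$, chosen as rational combinations of the $e_k$. Since $\Phi(\omega,t,u)\in\mathcal{K}^{+}(H)$, the spectral theorem for compact self-adjoint operators gives, for each fixed $(\omega,t,u)$, a representation \eqref{eqCanonicalDecompoRandomCompact}. The eigenvalues can be characterized variationally (Courant--Fischer):
$$
\lambda_n=\sup_{\dim V\leq n-1}\ \sup_{h\perp V,\ \norm{h}=1}\inner{\Phi h}{h}.
$$
Alternatively, they can be defined recursively by deflation: $\lambda_1=\norm{\Phi}=\sup_{k}\inner{\Phi d_k}{d_k}$, and then $\lambda_{n+1}$ is the norm of $\Phi_n:=\Phi-\sum_{j\le n}\lambda_j\varphi_j\otimes\varphi_j$. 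Because each $\inner{\Phi h_1}{h_2}$ is $\mathcal{P}_T\otimes\mathcal{B}(U)$-measurable, $\lambda_1$ is a countable supremum of measurable functions and hence measurable. The remaining properties are then clear: the sequence is decreasing, non-negative, and tends to $0$ by compactness.

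The main obstacle is the measurable choice of the eigenvector $\varphi_n$, since eigenspaces may be multidimensional and vary discontinuously. My first approach is an explicit maximizing-sequence construction. Given measurable $\lambda_1$, for each $m$ let $k_m(\omega,t,u)$ be the smallest index $k$ with $\inner{\Phi d_k}{d_k}>\lambda_1-2^{-m}$; this is measurable since it is a first-hit index over countably many measurable events. Then $d_{k_m}$ is a measurable $H$-valued process. Compactness ensures that $\Phi d_{k_m}$ has convergent subsequences, but we need a measurable limit. To get one, I would instead apply the Kuratowski--Ryll-Nardzewski measurable selection theorem to the closed-valued multifunction
$$
(\omega,t,u)\mapsto E_1=\{h:\norm{h}=1,\ \Phi h=\lambda_1 h\},
$$
with $E_1$ replaced by $\{0\}$ when $\lambda_1=0$. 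Its measurability (weak measurability) follows from writing $\{E_1\cap B\neq\emptyset\}$ for open balls $B$ via countable conditions on $\norm{\Phi d-\lambda_1 d}$ over $d\in D$, using compactness to pass from approximate to exact eigenvectors. This compactness step, showing that the graph or distance function is measurable, is where I expect the technical work to lie. A cleaner alternative is to note that $H$ is Polish and that the set $\{(\omega,t,u,h): \Phi h=\lambda_1 h,\ \norm{h}=1\}$ is in $\mathcal{P}_T\otimes\mathcal{B}(U)\otimes\mathcal{B}(H)$, with nonempty compact sections in the unit sphere. A measurable selection then exists by the theorem on compact-valued selections, and no completion is needed because the sections are $\sigma$-compact.

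Having $\lambda_1,\varphi_1$, I pass to $\Phi_1=\Phi-\lambda_1\varphi_1\otimes\varphi_1$. It is again in $\mathcal{K}^{+}(H)$ with measurable matrix coefficients, since $\inner{\Phi_1h_1}{h_2}=\inner{\Phi h_1}{h_2}-\lambda_1\inner{\varphi_1}{h_1}\inner{\varphi_1}{h_2}$. Iterating gives $(\lambda_n,\varphi_n)$. Orthogonality \eqref{eqOrthogonalFunctionsG} holds because $\varphi_{n+1}$ is an eigenvector of $\Phi_n$ for a positive eigenvalue, hence lies in the range of $\Phi_n$, which is orthogonal to $\varphi_1,\dots,\varphi_n$. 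The normalization \eqref{eqNormFunctionsG} is enforced by setting $\varphi_n=0$ on $\{\lambda_n=0\}$, which is a measurable set. Finally, the series \eqref{eqCanonicalDecompoRandomCompact} converges in operator norm because $\norm{\Phi_n}=\lambda_{n+1}\to 0$, and $\lambda_n\to0$ since nonzero eigenvalues of a compact operator counted with multiplicity accumulate only at $0$.
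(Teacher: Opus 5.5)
Your proof is correct in outline, and its skeleton is the same as the paper's: deflate with $\Phi_n=\Phi-\sum_{j\le n}\lambda_j\varphi_j\otimes\varphi_j$ and $\lambda_{n+1}=\norm{\Phi_n}$, get orthogonality from $\Phi_n\varphi_m=0$ for $m\le n$, get $\lambda_n\to0$ from compactness, and get operator-norm convergence from $\norm{\Phi_n}=\lambda_{n+1}$. You differ in the measurable selection of $\varphi_n$, which is the heart of the proposition.

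The paper never treats eigenspaces as a multifunction. It applies the measurable maximum theorem (its Theorem~\ref{theoremKuratowskiRyllNardzewski}) to $\psi(h)=\inner{\Phi h}{h}$ on the closed unit ball with the weak topology. That ball is compact metrizable, and $\psi$ is continuous on it precisely because $\Phi$ is compact. A measurable maximizer is then automatically a unit eigenvector for $\lambda_1=\norm{\Phi}$ when $\lambda_1>0$, and is set to $0$ on $\{\lambda_1=0\}$.

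You instead apply the Kuratowski--Ryll-Nardzewski selection theorem to $E_1=\{h:\norm{h}=1,\ \Phi h=\lambda_1 h\}$, which forces you to verify weak measurability. This works, but the step you left open needs one more ingredient. On $\{\lambda_1>0\}$, write $\{E_1\cap B\neq\emptyset\}$ as $\bigcup_j\{\inf_{d\in D\cap B_j}\norm{\Phi d-\lambda_1 d}=0\}$, where $B_j$ ranges over rational open balls with $\overline{B_j}\subset B$. The part on $\{\lambda_1=0\}$ is trivial.

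Using $B$ itself in place of the $B_j$ fails, because approximate eigenvectors in $B$ may converge to a point of $\partial B$. With the $B_j$, compactness does the rest: take a weakly convergent subsequence $d_k\rightharpoonup h$, so $\Phi d_k\to\Phi h$ and hence $d_k\to\lambda_1^{-1}\Phi h$ strongly. This yields an exact unit eigenvector in $\overline{B_j}\subset B$.

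So both arguments spend compactness of $\Phi$ at the same point. The paper's route is shorter, since the maximizer is an eigenvector for free. Yours yields measurability of the whole eigenspace multifunction, which is more than the proposition needs.

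Your ``cleaner alternative'' should be dropped or justified. As usually stated, uniformization without completion for measurable sets with ($\sigma$-)compact sections (Novikov, Arsenin--Kunugui) is a theorem about standard Borel base spaces. Here $(\Omega\times[0,T]\times U,\mathcal{P}_T\otimes\mathcal{B}(U))$ is an abstract measurable space, so ``no completion is needed'' does not follow from that theorem. Justifying it in this setting would essentially require the weak-measurability argument above anyway.
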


For our proof of Proposition \ref{propCanonicalDecompoRandomCompact} we will need the following theorem. 

\begin{theorem}[Kuratowski–Ryll-Nardzewski]\label{theoremKuratowskiRyllNardzewski}
Let $E$ be a compact metric space and let $\psi: E \times \widetilde{\Omega} \rightarrow \R$ be a mapping such that $\psi(x, \cdot)$ is measurable for arbitrary $x \in E$ and $\psi(\cdot, \widetilde{\omega})$ is a continuous mapping for arbitrary $\widetilde{\omega} \in \widetilde{\Omega}$. Then there exist an $E$-valued measurable function $X:\widetilde{\Omega} \rightarrow E $ such that 
$$ \psi(X(\widetilde{\omega}),\widetilde{\omega})=\sup_{x \in E} \psi(x,\widetilde{\omega}), \quad \widetilde{\omega} \in \widetilde{\Omega}.$$
\end{theorem}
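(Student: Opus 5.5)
The statement to prove is the Kuratowski--Ryll-Nardzewski measurable selection theorem for a Carathéodory function on a compact metric space. I will build the maximizer by a nested sequence of measurable choices on finite nets, and then pass to a limit using compactness.

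First I would observe that $s(\widetilde\omega):=\sup_{x\in E}\psi(x,\widetilde\omega)$ is measurable. Indeed, $E$ is separable; fix a countable dense set $D=\{d_1,d_2,\dots\}$. Continuity in $x$ gives $s=\sup_{k}\psi(d_k,\cdot)$, and this supremum is finite because a continuous function on a compact set is bounded. Next, for each $m$ I fix a finite cover of $E$ by closed balls $B_{m,1},\dots,B_{m,N_m}$ of radius $2^{-m}$. The sets $K\subseteq E$ considered below are compact, and for each of them I use the quantity
\[
s_K(\widetilde\omega)=\sup_{x\in K}\psi(x,\widetilde\omega).
\]
This is again measurable: $K$ is separable, so one can take the supremum over a countable dense subset of $K$.

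The construction is inductive. Set $K_0(\widetilde\omega)=E$. Given a compact $K_{m}(\widetilde\omega)$ with $s_{K_m}=s$, the function $\psi(\cdot,\widetilde\omega)$ attains $s$ on $K_m$. Since the finitely many sets $K_m\cap B_{m+1,j}$ cover $K_m$, some $j$ satisfies $s_{K_m\cap B_{m+1,j}}(\widetilde\omega)=s(\widetilde\omega)$. I let $j_{m+1}(\widetilde\omega)$ be the smallest such $j$ and put $K_{m+1}=K_m\cap B_{m+1,j_{m+1}}$.

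Measurability follows because $K_m(\widetilde\omega)$ takes only finitely many values, namely intersections $B_{1,j_1}\cap\dots\cap B_{m,j_m}$. Each event $\{j_1=a_1,\dots,j_m=a_m\}$ is defined by finitely many equalities and inequalities between measurable functions of the form $s_K$ and $s$, with the empty set handled by the convention $\sup\emptyset=-\infty$. By induction, every such event is measurable.

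The sets $K_m(\widetilde\omega)$ are nonempty, nested and compact, with diameters at most $2^{1-m}$. Their intersection is therefore a single point, which I call $X(\widetilde\omega)$. The key step is then to show that $X$ attains the supremum. For each $m$, choose $x_m\in K_m$ with $\psi(x_m,\widetilde\omega)=s(\widetilde\omega)$; such a point exists by compactness and continuity. Since both $x_m$ and $X(\widetilde\omega)$ lie in $K_m$, we have $x_m\to X(\widetilde\omega)$, and continuity of $\psi(\cdot,\widetilde\omega)$ gives $\psi(X,\widetilde\omega)=s$.

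It remains to show that $X$ is measurable. Pick any point $c_m(\widetilde\omega)$ from the finite family of candidate sets, for example a fixed point of each nonempty $B_{1,j_1}\cap\dots\cap B_{m,j_m}$ on the corresponding event. Then $c_m$ is a simple measurable function with $d(c_m,X)\le 2^{1-m}$. Hence $X$ is a pointwise limit of measurable $E$-valued maps into a metric space, and is therefore measurable.

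I expect the main obstacle to be the measurability bookkeeping. The step where $\sup$ over a random set is involved only works because the random sets range over a finite family. One must also check that the argmax sets are nonempty at every stage, which is exactly where the compactness of $K_m$ is used.
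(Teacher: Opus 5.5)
The paper does not prove this statement. It quotes the classical Kuratowski--Ryll-Nardzewski theorem and uses it only as a tool in the proof of Proposition~\ref{propCanonicalDecompoRandomCompact}. Your argument is a correct, self-contained proof, and each step checks out:
\begin{enumerate}
\item the finite-net refinement that picks the least admissible index;
\item the test events $\{s_K=s\}$ and $\{s_K<s\}$, which only involve the finitely many deterministic compact sets $B_{1,a_1}\cap\dots\cap B_{m,a_m}$;
\item the nested-compact argument showing that the limit point attains the supremum;
\item measurability of $X$ as a pointwise limit of simple maps.
\end{enumerate}

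The route a citation would take is different. It applies the general Kuratowski--Ryll-Nardzewski selection theorem to the argmax correspondence $\widetilde\omega\mapsto\{x\in E:\psi(x,\widetilde\omega)=s(\widetilde\omega)\}$. This correspondence has nonempty compact values. It is measurable because, for closed $F\subseteq E$, $\{\widetilde\omega:\operatorname{argmax}\cap F\neq\emptyset\}=\{s_F=s\}$, which is exactly the test you use. Your proof is essentially the standard proof of that selection theorem, specialized to this one correspondence.

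What each approach buys: yours is fully elementary and needs no theory of measurable multifunctions. The citation route is shorter and places the result inside the general selection theorem, which covers Polish targets and arbitrary closed-valued weakly measurable correspondences. Your proof only needs a metric compatible with the topology of $E$. So it also covers the paper's application, where $E$ is the closed unit ball of $H$ with the weak topology; that space is compact and metrizable because $H$ is separable.
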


\begin{proof}[Proof of Proposition \ref{propCanonicalDecompoRandomCompact}] We apply Theorem \ref{theoremKuratowskiRyllNardzewski} to $\widetilde{\Omega} = \Omega \times [0,T] \times U$ equipped with the $\sigma$-algebra  $\widetilde{\mathcal{F}}=\mathcal{P}_{T} \otimes \mathcal{B}(U)$, the set $E=\{h \in H: \norm{h} \leq 1  \}$ endowed with the weak topology and the function 
$$ \psi(h,\omega,t,u)=\inner{\Phi(\omega,t,u)h}{h},$$
for $h \in E$, $(\omega,t,u) \in \widetilde{\Omega}$. All conditions of  Theorem \ref{theoremKuratowskiRyllNardzewski} are satisfied, so there exists a $H$-valued $\mathcal{P}_{T} \otimes \mathcal{B}(U)/\mathcal{B}(H)$-measurable process $g_{1}$ such that 
\begin{equation*}
\inner{\Phi(t,\omega,u)\varphi_{1}(\omega,t,u)}{\varphi_{1}(\omega,t,u)}_{H} 
= \sup_{\norm{h} \leq 1} \inner{\Phi(\omega,t,u)h}{h}_{H} = \lambda_{1}(\omega,t,u).
\end{equation*}
By the properties of $\Phi$ we have $\lambda_{1}$ is   $\mathcal{P}_{T} \otimes \mathcal{B}(U)$-measurable, non-negative, and indeed  $\lambda_{1}(\omega,t,u) = \norm{\Phi(\omega,t,u)}_{\mathcal{L}(H)}$. Observe that if  $\lambda_{1}(\omega,t,u)>0$ we can choose $\varphi_{1}(\omega,t,u)$ such that $\norm{\varphi_{1}(\omega, t, u)}_{H}=1$, otherwise we can choose $\varphi_{1}(\omega, t, u)=0$. Moreover, being $\Phi(t,\omega,u)$ self-adjoint and by the definition of $\varphi_{1}(\omega,t,u)$  we have 
$$ \Phi(t,\omega,u)\varphi_{1}(\omega,t,u)=\lambda_{1}(\omega,t,u)  \varphi_{1}(\omega,t,u). $$

We can construct inductively the sequences $\{\lambda_{n}\}$ and $\{\varphi_{n}\}$. In effect, assume we have $\lambda_{1}, \dots, \lambda_{n}$ and $\varphi_{1}, \dots, \varphi_{n}$ with the properties in the statement of the theorem. Let 
$$ \Phi_{n}(\omega,t,u) = \Phi(\omega,t,u)-\sum_{j=1}^{n} \lambda_{j}(\omega,t,u) \varphi_{j}(\omega,t,u) \otimes \varphi_{j}(\omega,t,u). $$
We apply  Theorem \ref{theoremKuratowskiRyllNardzewski} to 
$$ \psi_{n}(h,\omega,t,u)=\inner{\Phi_{n}(\omega,t,u)h}{h}_{H},$$
with $\widetilde{\Omega}$,   $\widetilde{\mathcal{F}}$ and $E$ as above, showing the existence of $\varphi_{n+1}$  such that  
\begin{equation*}
\inner{\Phi_{n}(t,\omega,u)\varphi_{n+1}(\omega,t,u)}{\varphi_{n+1}(\omega,t,u)}_{H} 
= \sup_{\norm{h} \leq 1} \inner{\Phi_{n}(\omega,t,u)h}{h}_{H} = \lambda_{n+1}(\omega,t,u).
\end{equation*}
As before,  if  $\lambda_{n+1}(\omega,t,u)>0$ we can choose $\varphi_{n+1}(\omega,t,u)$ such that $\norm{\varphi_{n+1}(\omega, t, u)}_{H}=1$, otherwise we can choose $\varphi_{n+1}(\omega, t, u)=0$.

Furthermore, if  $\lambda_{n+1}(\omega,t,u)>0$ we must have  $\inner{\varphi_{m} (\omega, t, u) }{ \varphi_{n+1} (\omega, t, u)}_{H}=0$ for $1 \leq m \leq n$.
To see why this is true, observe that as part of our induction hypothesis we are assuming 
$$ \Phi(t,\omega,u)\varphi_{m}(\omega,t,u)=\lambda_{m}(\omega,t,u)  \varphi_{m}(\omega,t,u), \mbox{ for } 1 \leq m \leq n, $$
then we have $\Phi_{n}(\omega,t,u) \varphi_{m}(\omega,t,u) = 0$ for all $1 \leq m \leq n$. This way 
$$\Phi_{n}(\omega,t,u) \left(\mbox{span}\{ \varphi_{m}(\omega,t,u): 1 \leq m \leq n \} \right)= \{ 0\}.$$
Hence, by the orthogonal decomposition theorem we conclude $\varphi_{n+1} (\omega, t, u) \in \mbox{span}\{ \varphi_{m}(\omega,t,u): 1 \leq m \leq n \}^{\perp}$. This completes the induction step.

Now, since $\lambda_{n+1}(\omega,t,u) = \norm{\Phi_{n}(\omega,t,u)}_{\mathcal{L}(H)}$ for $n \geq 0$ (with $\Phi_{0}(\omega,t,u)=\Phi(\omega,t,u)$), it is clear from the arguments in the above paragraphs that $\lambda_{n}(\omega,t,u) \geq \lambda_{n+1}(\omega,t,u)$. Hence the sequence $\{\lambda_{n} \}$ is decreasing. It remains to show that $\lambda_{n} \rightarrow 0$ and that \eqref{eqCanonicalDecompoRandomCompact} holds true.

First, by the orthogonality of the $\varphi_{n}$ we have for each $m,n \in \N$ that
$$ \norm{\Phi(t,\omega,u)\varphi_{m}(\omega,t,u)-\Phi(t,\omega,u)\varphi_{n}(\omega,t,u) }_{H}^{2}= \lambda_{m}(\omega,t,u)^{2} + \lambda_{n}(\omega,t,u)^{2}.  $$
Assume $\lambda_{n}(\omega,t,u)   \nrightarrow 0$. Then the exists $c>0$ such that $ \lambda_{n}(\omega,t,u) \geq c$ for all $n \in \N$. Therefore, 
$$ \norm{\Phi(t,\omega,u)\varphi_{m}(\omega,t,u)-\Phi(t,\omega,u)\varphi_{n}(\omega,t,u) }_{H}^{2} \geq 2c>0, \quad \forall m,n \in \N.$$
But the above is a contradiction because since $\Phi(t,\omega,u)\in \mathcal{K}^{+}(H)$ the sequence $(\Phi(t,\omega,u)\varphi_{n}(\omega,t,u): n \in \N ) $ is relatively compact, hence has a convergent subsequence contradicting the inequality above. Thus $\lambda_{n} \rightarrow 0$. 

Finally, since 
$$ \norm{\Phi(\omega,t,u)-\sum_{j=1}^{n} \lambda_{j}(\omega,t,u) \varphi_{j}(\omega,t,u) \otimes \varphi_{j}(\omega,t,u)}_{\mathcal{L}(H)}=\norm{ \Phi_{n}(\omega,t,u)}_{\mathcal{L}(H)}=\lambda_{n+1}(\omega,t,u), $$
we conclude \eqref{eqCanonicalDecompoRandomCompact} holds true. 
\end{proof}

% \section{Ideas}

% \begin{enumerate}
%     \item Hacer un ejemplo
%     \item Fubini d\'ebil
%     \item Teorema de representaci\'on de martingalas
%     \item Relación entre geometría del Banach y la radonificación de la integral
%     \item SDE's y explorar strong solutions. Martingale problems (weak solutions)
%     \item Explorar un Fubini tipo
%     $$
%     L^1(E,\rho,\mathcal{L}(Y^*,\mathcal{M}_T^2)) \equiv \mathcal{L}(Y^*,L^1(E,\rho,\mathcal{M}_T^2))
%     $$
% \end{enumerate} \bigskip

\noindent \textbf{Acknowledgments}  This work was partially supported by The University of Costa Rica through the grant ``C6163-Ecuaciones Diferenciales  Estoc\'{a}sticas en Espacios de Hilbert''. 

%The authors thank two anonymous referees for valuable comments and suggestions that contributed greatly to improve the presentation of this article. 

\smallskip

\noindent \textbf{Data Availability.} Data sharing not applicable to this article as no data sets were generated or analyzed during the current study.

\section*{Declarations}

\noindent \textbf{Conflict of interest} The authors have no conflicts of interest to declare that are relevant to the content of this article.


\begin{bibdiv}
    \begin{biblist}
\bib{AlvaradoFonseca:2021}{article}{
   author={Alvarado-Solano, A. E.},
   author={Fonseca-Mora, C. A.},
   title={Stochastic integration in Hilbert spaces with respect to
   cylindrical martingale-valued measures},
   journal={ALEA Lat. Am. J. Probab. Math. Stat.},
   volume={18},
   year={2021},
   number={2},
   pages={1267--1295},
   review={\MR{4282189}},
   doi={10.30757/alea.v18-47},
}



\bib{CCFM:SPDE}{article}{
   author={Cambronero, S.},
   author={Campos, D.},
   author={Fonseca-Mora, C. A.},
   author={Mena, D.},
   title={Cylindrical martingale-valued measures, stochastic integration and
   SPDEs},
   journal={Stoch. Partial Differ. Equ. Anal. Comput.},
   volume={13},
   year={2025},
   number={2},
   pages={887--955},
   issn={2194-0401},
   review={\MR{4908980}},
   doi={10.1007/s40072-024-00345-w},
}







\bib{CCFM:Ito}{article}{
   author={Cambronero, S.},
   author={Campos, D.},
   author={Fonseca-Mora, C. A.},
   author={Mena, D.},
   title={It\^{o}'s Formula for It\^{o} processes defined with respect to a cylindrical-martingale valued measure},
   journal = {Stoch. Anal. Appl.},
   year = {in press},
   doi={10.1080/07362994.2026.2701258},
  %eprint={2407.16086},
}


\bib{CohenElliott:2015}{book}{
   author={Cohen, S. N.},
   author={Elliott, R. J.},
   title={Stochastic calculus and applications},
   series={Probability and its Applications},
   edition={2},
   publisher={Springer, Cham},
   year={2015},
   pages={xxiii+666},
   isbn={978-1-4939-2866-8},
   isbn={978-1-4939-2867-5},
   review={\MR{3443368}},
   doi={10.1007/978-1-4939-2867-5},
}

\bib{DaPratoZabczyk}{book}{
   author={Da Prato, G.},
   author={Zabczyk, J.},
   title={Stochastic equations in infinite dimensions},
   series={Encyclopedia of Mathematics and its Applications},
   volume={152},
   edition={2},
   publisher={Cambridge University Press, Cambridge},
   year={2014},
   pages={xviii+493},
   isbn={978-1-107-05584-1},
   review={\MR{3236753}},
   doi={10.1017/CBO9781107295513},
}



\bib{GawaMand:2010}{book}{
   author={Gawarecki, L.},
   author={Mandrekar, V.},
   title={Stochastic differential equations in infinite dimensions with
   applications to stochastic partial differential equations},
   series={Probability and its Applications (New York)},
   publisher={Springer, Heidelberg},
   year={2011},
   pages={xvi+291},
   isbn={978-3-642-16193-3},
   review={\MR{2560625}},
   doi={10.1007/978-3-642-16194-0},
}





\bib{KarouiMeleard:1990}{article}{
   author={El Karoui, N.},
   author={M\'el\'eard, S.},
   title={Martingale measures and stochastic calculus},
   journal={Probab. Theory Related Fields},
   volume={84},
   year={1990},
   number={1},
   pages={83--101},
   issn={0178-8051},
   review={\MR{1027822}},
   doi={10.1007/BF01288560},
}




% \bib{Lu_Zhang}{book}{
%    author={L\"u, Qi},
%    author={Zhang, Xu},
%    title={Mathematical control theory for stochastic partial differential
%    equations},
%    series={Probability Theory and Stochastic Modelling},
%    volume={101},
%    publisher={Springer, Cham},
%    year={[2021] \copyright 2021},
%    pages={xiii+592},
%    isbn={978-3-030-82330-6},
%    isbn={978-3-030-82331-3},
%    review={\MR{4363403}},
%    doi={10.1007/978-3-030-82331-3},
% }


\bib{LeGallBrownian}{book}{
   author={Le Gall, J.-F.},
   title={Brownian motion, martingales, and stochastic calculus},
   series={Graduate Texts in Mathematics},
   volume={274},
   edition={Translated from the 2013 French edition},
   publisher={Springer, [Cham]},
   year={2016},
   pages={xiii+273},
   isbn={978-3-319-31088-6},
   isbn={978-3-319-31089-3},
   review={\MR{3497465}},
   doi={10.1007/978-3-319-31089-3},
}


\bib{MetivierPellaumail}{book}{
   author={M\'etivier, M.},
   author={Pellaumail, J.},
   title={Stochastic integration},
   series={Probability and Mathematical Statistics},
   publisher={Academic Press [Harcourt Brace Jovanovich, Publishers], New
   York-London-Toronto},
   year={1980},
   pages={xii+196},
   isbn={0-12-491450-0},
   review={\MR{0578177}},
}




\bib{Metivier}{book}{
   author={M\'etivier, M.},
   title={Semimartingales},
   series={De Gruyter Studies in Mathematics},
   volume={2},
   note={A course on stochastic processes},
   publisher={Walter de Gruyter \& Co., Berlin-New York},
   year={1982},
   pages={xi+287},
   isbn={3-11-008674-3},
   review={\MR{0688144}},
}




\bib{Ondrejat:2005}{article}{
   author={Ondrej\'at, M.},
   title={Brownian representations of cylindrical local martingales,
   martingale problem and strong Markov property of weak solutions of SPDEs
   in Banach spaces},
   journal={Czechoslovak Math. J.},
   volume={55(130)},
   year={2005},
   number={4},
   pages={1003--1039},
   issn={0011-4642},
   review={\MR{2184381}},
   doi={10.1007/s10587-005-0084-z},
}





\bib{PeszatZabczykSPDE}{book}{
   author={Peszat, S.},
   author={Zabczyk, J.},
   title={Stochastic partial differential equations with L\'evy noise},
   series={Encyclopedia of Mathematics and its Applications},
   volume={113},
   note={An evolution equation approach},
   publisher={Cambridge University Press, Cambridge},
   year={2007},
   pages={xii+419},
   isbn={978-0-521-87989-7},
   review={\MR{2356959}},
   doi={10.1017/CBO9780511721373},
}





\bib{VakhaniaTarieladzeChobanyan}{book}{
   author={Vakhania, N. N.},
   author={Tarieladze, V. I.},
   author={Chobanyan, S. A.},
   title={Probability distributions on Banach spaces},
   series={Mathematics and its Applications (Soviet Series)},
   volume={14},
   note={Translated from the Russian and with a preface by Wojbor A.
   Woyczynski},
   publisher={D. Reidel Publishing Co., Dordrecht},
   year={1987},
   pages={xxvi+482},
   isbn={90-277-2496-2},
   review={\MR{1435288}},
   doi={10.1007/978-94-009-3873-1},
}




\bib{VeraarYaroslavtsev:2016}{article}{
   author={Veraar, M.},
   author={Yaroslavtsev, I.},
   title={Cylindrical continuous martingales and stochastic integration in
   infinite dimensions},
   journal={Electron. J. Probab.},
   volume={21},
   year={2016},
   pages={Paper No. 59, 53},
   review={\MR{3563887}},
   doi={10.1214/16-EJP7},
}



\bib{Walsh:1986}{article}{
   author={Walsh, J. B.},
   title={An introduction to stochastic partial differential equations},
   conference={
      title={\'Ecole d'\'et\'e{} de probabilit\'es de Saint-Flour,
      XIV---1984},
   },
   book={
      series={Lecture Notes in Math.},
      volume={1180},
      publisher={Springer, Berlin},
   },
   isbn={3-540-16441-3},
   year={1986},
   pages={265--439},
   review={\MR{0876085}},
   doi={10.1007/BFb0074920},
}
        
    \end{biblist}
\end{bibdiv}
\end{document}